\newtheorem{theorem}{Theorem}[section]
\newtheorem{lemma}[theorem]{Lemma}
\newtheorem{corollary}[theorem]{Corollary}
\theoremstyle{definition}
\newtheorem{definition}[theorem]{Definition}
\theoremstyle{proposition}
\newtheorem{proposition}[theorem]{Proposition}
\theoremstyle{remark}
\newtheorem{remark}[theorem]{Remark}
\numberwithin{equation}{section}
\begin{document}

\title{The Muskat problem with $C^1$ data}

\author{Ke Chen}
\address{Fudan University, 220 Handan Road, Shanghai, 200433, China.}
\email{kchen18@fudan.edu.cn}

\author{Quoc-Hung Nguyen}
\address{ShanghaiTech University,
	393 Middle Huaxia Road,
	Shanghai, 201210, China.}
\curraddr{Academy of Mathematics and Systems Science, Chinese Academy of Sciences, Beijing, 100190, China. }
\email{qhnguyen@amss.ac.cn}
\thanks{Quoc-Hung Nguyen is supported by the ShanghaiTech University startup fund and the National Natural Science Foundation of China (12050410257). This work is finished during Ke Chen and Yiran Xu's visit to ShanghaiTech.}

\author{Yiran Xu}
\address{Fudan University, 220 Handan Road, Shanghai, 200433, China.}
\email{yrxu20@fudan.edu.cn}
\subjclass[2020]{Primary 35Q35, 76S05}

\date{ xxxxx and, in revised form, xxxx.}


\keywords{Muskat problem, free boundary problem, local well-posedness}

\begin{abstract}
 In this paper we prove that the Cauchy problem of the Muskat equation is wellposed locally in time for any initial data in $\dot C^1(\mathbb{R}^d)\cap L^2(\mathbb{R}^d)$.
\end{abstract}

\maketitle
	\section{Introduction}
The Muskat equation is an important model in the analysis of free surface flows, which 
describes the dynamics of two incompressible and immiscible fluids with different densities and viscosities separated by a porous media whose velocities obey 
Darcy's law (see \cite{darcy1856fontaines},\cite{Mus34}). 
Its main feature is  that it is a  fractional degenerate parabolic equation.
This feature is shared by several equations 
which have attracted a lot of attention in recent years, like the surface quasi-geostrophic equation, 
the Hele-Shaw equation and	the fractional porous media equation.

Let us introduce the Muskat problem. We consider the dynamics of a time-dependent curve $\Sigma(t)$ separating two domains $\Omega_1(t)$ and $\Omega_2(t)$. 
Under the supposition that $\Sigma(t)$ is the graph of some function, we introduce the following notations
\begin{align*}
	\Omega_1(t)&=\left\{ (x,y)\in \mathbb{R}^{d}\times \mathbb{R}\,;\, y>f(t,x)\right\},\\
	\Omega_2(t)&=\left\{ (x,y)\in \mathbb{R}^{d}\times \mathbb{R}\,;\, y<f(t,x)\right\},\\
	\Sigma(t)&=\left\{ (x,y)\in \mathbb{R}^{d}\times \mathbb{R}\,;\, y=f(t,x)\right\}.
\end{align*}
Assume 
that each domain~$\Omega_j$, $j=1,2$, 
is occupied by an incompressible fluid with constant density~$\rho_j$ and 
denote $\rho=\rho_1\mathbf{1}_{\Omega_1(t)}+\rho_2\mathbf{1}_{\Omega_2(t)}$. 
Then the motion is determined by the incompressible porous media equations, where the velocity 
field $v$ is given by Darcy's law:
\begin{equation*}
	\left\{
	\begin{aligned}
		&\partial_t\rho+\text{div}(\rho v)=0,\quad\text{	(transport equation)}\\
		& \text{div}(v)=0,\quad \quad \quad \ \quad  \text{(incompressible condition)}\\
		&v+\nabla (P+\rho gy)=0,\quad~\text{	(Darcy's law)}
	\end{aligned}
	\right.
\end{equation*}
where $g>0$ is the acceleration of gravity. 

Changes of unknowns, reducing the problem to an evolution 
equation for the free surface parametrization, 
have been known for quite a long time  (see~\cite{CaOrSi-SIAM90,EsSi-ADE97,Siegel2004}). 
These approaches were further developed by C\'ordoba and Gancedo \cite{Cordoba2007Contour}
who obtained a beautiful compact formulation of the Muskat equation. Indeed, 
they showed that the Muskat problem is equivalent to the following 
equation for the free surface elevation:
\[
\partial_t f(t,x)=\frac{\rho_2-\rho_1}{2^{d}\pi}\text{	P.V.}\int_{\mathbb{R}^d}\frac{\alpha\cdot\nabla_x\Delta_\alpha f(t,x)}{\left\langle \Delta_\alpha f(t,x)\right\rangle^{d+1}}\frac{d\alpha}{|\alpha|^d},
\]
where 
the integral is understood in the sense of principal values, $\Delta_\alpha f$ is the slope defined by $
\Delta_\alpha f(x)=\frac{f(x)-f(x-\alpha)}{|\alpha|},$
and $\langle a \rangle=(1+a^2)^\frac{1}{2}$.\\
At a linear level, the Muskat equation reads
\begin{equation*}
\begin{aligned}
	\partial_t f(t,x)=\frac{\rho_2-\rho_1}{2^{d}\pi}\text{	P.V.}\int_{\mathbb{R}^d}\alpha\cdot\nabla_x\Delta_\alpha f(t,x)\frac{d\alpha}{|\alpha|^d}=-\frac{\rho_2-\rho_1}{2}|D| f(t,x).
\end{aligned}	
\end{equation*}

The problem is said to be in stable regimes (heavier 
fluid below) if $\rho_2>\rho_1$ and unstable regimes (heavier 
fluid on the top) if $\rho_2<\rho_1$. In this paper, 
we consider the problem in stable regimes, i.e. $\rho_2>\rho_1$. 
In order to simplify the exposition we take $\frac{\rho_2-\rho_1}{2^{d}\pi}=1$. This leads to the equation
\begin{equation}\label{E1}
	\begin{aligned}
		\partial_t f(t,x)=\text{	P.V.}\int_{\mathbb{R}^d}\frac{\alpha\cdot\nabla_x\Delta_\alpha f(t,x)}{\left\langle \Delta_\alpha f(t,x)\right\rangle^{d+1}}\frac{d\alpha}{|\alpha|^d}.\\
	\end{aligned}
\end{equation}
Recall that the Muskat equation is invariant under the change of unknowns:
\[
f(t,x)\rightarrow f_\lambda(t,x):=\frac{1}{\lambda}f(\lambda t,\lambda x).
\]
By a direct calculation, one verifies that the spaces $\dot H^{1+\frac{d}{2}}(\mathbb{R}^d),\dot W^{1,\infty}(\mathbb{R}^d)$ are two critical spaces for the Cauchy problem of the Muskat equation \eqref{E1}.

The analysis of the Cauchy problem for the Muskat equation is now well developed, 
including global existence results under mild smallness assumptions and blow-up 
results for some large enough initial data. 
Local well-posedness results can be traced  back to the works of Yi~\cite{Yi2003}, 
Ambrose~\cite{Ambrose2004,Ambrose2007}, 
C\'ordoba and Gancedo~\cite{Cordoba2007Contour,Cordoba2009}, C\'ordoba, C\'ordoba and Gancedo~\cite{Cordoba2011}, 
Cheng, Granero-Belinch\'on, 
Shkoller~\cite{21}. 
Local well-posedness results  in 
the sub-critical spaces were obtained by 
Constantin, Gancedo, Shvydkoy and Vicol~\cite{25} 
for initial data in the Sobolev space 
$W^{2,p}(\mathbb{R})$ for some $p>1$, Ables-Matioc \cite{Ables-Matioc} for $W^{s,p}$ with $s>1+\frac{1}{p}$, and Nguyen-Pausader \cite{43}, Matioc~\cite{Matioc1,Matioc2}, Alazard-Lazar~\cite{Alazard2020Paralinearization}  for initial data 
in $H^s(\mathbb{R})$ with $s>3/2$. 
Since the Muskat equation is parabolic, the proof of the local well-posedness 
results also gives global well-posedness results under a smallness assumption, see Yi~\cite{Yi2003}. 
The first global well-posedness results 
under mild smallness assumptions, namely assuming 
that the Lipschitz semi-norm is smaller than $1$, was 
obtained by Constantin, C{\'o}rdoba, Gancedo, Rodr{\'\i}guez-Piazza 
and Strain~\cite{Constantin2010,Constantin2013,25} (see also \cite{Cameron2019}). They also proved the existence of global classical solution  when the initial data $f_0\in H^3(\mathbb{R})$ with the Wiener norm $\|f_0\|_{\mathcal{L}^{1,1}}=\||\xi|\hat f_0(\xi)\|_{L^1_\xi(\mathbb{R})}$ less than some explicit constant (see also \cite{Constantin2010} which improved the constant to $\frac{1}{3}$). Note that there exists finite time blow up solution for the general non-graph interface(see \cite{CCF+12,Castro2013}). However, 
it is possible to solve the Cauchy problem for initial 
data whose slope can be arbitrarily large. 
Deng, Lei and Lin in~\cite{33} obtained the first result in this direction, under the assumption that the 
initial data are monotone (see also Remark \ref{rem1}).  C\'ordoba and Lazar in ~\cite{Cordoba-Lazar-H3/2} proved global  well-posedness for the 2D Muskat equation in critical space $H^{1+\frac{d}{2}}\cap W^{1,\infty}$ with $\dot H^{1+\frac{d}{2}}$ norm small, then Gancedo and Lazar \cite{Gancedo} extended the result to 3D. Recently, Alazard and the second author \cite{		ThomasHfirst,Alazard2020,Alazard2020endpoint,TH4} proved well-posedness for the Muskat equation with unbounded slopes. In particular, in \cite{Alazard2020endpoint} they  obtained global well-posedness  with $\dot H^{\frac{3}{2}}$  norm of initial data small and  local well posedness for large data in  $\dot H^{\frac{3}{2}}$, where they used a null-type structure to compensate the degeneracy of the parabolic behavior. In 3D, the existence of global classical solutions was established when the initial data satisfies $\|f_0\|_{\mathcal{L}^{1,1}}\leq \frac{1}{5}$. We also note that this result has been extended in \cite{viscojump} to a more general scenario where the viscosity of the two fluids can be different. More recently,  the global existence was proved in \cite{Cameron2020}, under the assumption $\|\nabla f_0\|_{L^\infty}\leq\frac{1}{\sqrt{5}}$. The existence of self-similar solutions with small initial data can be found in \cite{Self}. We also refer interested readers to \cite{TH5}, \cite{peskin} for other non-local parabolic equations.


At the moment, there is no result about local well-posedness for large data in $\dot W^{1,\infty}(\mathbb{R}^d)$ or Wiener space $\mathcal{L}^{1,1}(\mathbb{R}^d)$.  So it is quite interesting to study whether the problem is wellposed in the critical space $\dot C^1(\mathbb{R}^d)$ without any smallness assumptions on the data. 

We state the main result of our paper as follows
\begin{theorem} \label{mainthm1}For any $c_0>0$, there exists $\sigma=\sigma(c_0)\in (0,1]$ such that for any initial data $f_0\in L^2(\mathbb{R}^d)\cap \dot W^{1,\infty}(\mathbb{R}^d)$ with $\|\nabla f_0\|_{L^\infty}\leq c_0$, and $f_0$ can be decomposed as 
	\begin{equation}\label{initialdecompose}
		f_0=f_{0,1}+f_{0,2},	\quad\text{with}\quad \quad\|\nabla f_{0,1}\|_{L^\infty}\leq \sigma\quad and \quad f_{0,2}\in H^{10d},
	\end{equation}
	there exists a solution $ f$ of the Cauchy problem \eqref{E1} in $[0,T]$ for  $T=T(\|f_{0,2}\|_{H^{10d}},c_0)>0$, satisfying 
	\begin{equation*}
		\sup_{t\in[0,T]}||f(t)||_{L^2}\leq ||f_0||_{L^2},
			\end{equation*}
			\begin{equation*}
		\sup_{t\in [0,T]}||\nabla f(t)||_{L^\infty}	+\sup_{t\in [0,T]} t||f(t)||_{\dot C^{\frac{3}{2}}}\leq C(\|f_{0,2}\|_{H^{10d}},c_0).
	\end{equation*}
	Moreover, the solution $f$ can be decomposed as 
	\begin{align*}
		f=F_1+F_2, \quad\quad\text{with} \quad\| F_1\|_{L^\infty([0,T];\dot W^{1,\infty})}\leq 10 d\sigma \quad\text{and}\quad F_2\in L^\infty([0,T], H^{10d}).
	\end{align*}
\end{theorem}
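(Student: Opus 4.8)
\medskip

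\noindent\textbf{Proof proposal.} The plan is to produce the solution \emph{together with} its decomposition $f=F_1+F_2$ by solving the coupled linear system
\begin{gather*}
\partial_t F_1 = \int_{\mathbb{R}^d}\frac{\alpha\cdot\nabla_x\Delta_\alpha F_1}{\langle\Delta_\alpha f\rangle^{d+1}}\,\frac{d\alpha}{|\alpha|^d},\qquad
\partial_t F_2 = \int_{\mathbb{R}^d}\frac{\alpha\cdot\nabla_x\Delta_\alpha F_2}{\langle\Delta_\alpha f\rangle^{d+1}}\,\frac{d\alpha}{|\alpha|^d},\\
\text{with}\quad f:=F_1+F_2,\qquad F_1|_{t=0}=f_{0,1},\qquad F_2|_{t=0}=f_{0,2},
\end{gather*}
whose sum recovers \eqref{E1}. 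Each equation is \emph{linear} in its own unknown, with the common coefficient $\langle\Delta_\alpha f\rangle^{-(d+1)}\in(0,1]$; since $\|\nabla f\|_{L^\infty}$ will be kept $\le c_0$, both are uniformly parabolic operators of order one, and the $F_1$ equation is a perturbation of $-c|D|$ that is small precisely when $\|\nabla F_1\|_{L^\infty}$ is small. Concretely I would first regularize the system (vanishing viscosity $\varepsilon\Delta$, or truncation of the kernel near $\alpha=0$), solve the regularized problem by Picard iteration in $H^{10d}\times H^{10d}$, derive the a priori bounds below uniformly in $\varepsilon$, and pass to the limit; the regularizing term respects all the monotonicity structures used, so it is harmless.

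The a priori bounds are: (i) $\|f(t)\|_{L^2}\le\|f_0\|_{L^2}$, obtained by pairing \eqref{E1} with $f$ and symmetrizing the resulting double integral in $\alpha$ to display a non-positive quadratic form (the known $L^p$-monotonicity of Muskat); (ii) $\|\nabla f(t)\|_{L^\infty}\le\|\nabla f_0\|_{L^\infty}\le c_0$, from the maximum principle for the slope (evaluate $\partial_j f$ at a maximizing point and use the sign of the kernel); (iii) the parabolic smoothing $\sup_{[0,T]}t\,\|f(t)\|_{\dot C^{3/2}}\le C(\|f_{0,2}\|_{H^{10d}},c_0)$, via the Duhamel formula against the propagator of the linearized operator, which gains half a derivative at cost $t^{-1/2}$ in Hölder spaces; and (iv), the crux, the decomposition bounds $\sup_{[0,T]}\|\nabla F_1(t)\|_{L^\infty}\le 10d\sigma$ and $\sup_{[0,T]}\|F_2(t)\|_{H^{10d}}\le C(\|f_{0,2}\|_{H^{10d}},c_0)$ on a time interval $T=T(\|f_{0,2}\|_{H^{10d}},c_0)$.

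For (iv): the bound on $F_2=f-F_1$ follows from an $H^{10d}$ energy estimate for its (linear, uniformly parabolic) equation — commuting $\langle D\rangle^{10d}$ past the operator produces forcing terms controlled, by tame product and commutator estimates, by $\|\nabla f\|_{L^\infty}\le c_0$ times $\|F_2\|_{H^{10d}}$ plus lower-order contributions of $F_1$ absorbed through its smallness and through (iii); Gronwall then gives boundedness on a time depending only on $\|f_{0,2}\|_{H^{10d}}$ and $c_0$. The estimate on $\|\nabla F_1\|_{L^\infty}$ is the delicate point. Differentiating the $F_1$ equation, the principal term is parabolic and harmless for a maximum principle, while the commutator term is proportional to $\alpha\cdot\nabla_x\Delta_\alpha F_1$ — hence carries the small factor $\|\nabla F_1\|_{L^\infty}$ — times $\Delta_\alpha\nabla f$, which is integrable against $|\alpha|^{-d}\,d\alpha$ exactly because of the $\dot C^{3/2}$ smoothing of $f$. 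The danger is that $\|f(t)\|_{\dot C^{3/2}}\sim t^{-1}$ is not time-integrable, so a crude Gronwall fails; one must instead exploit the antisymmetry/null structure of the kernel (as in the $\dot H^{3/2}$ theory of Alazard and the second author) to gain an extra power of $|\alpha|$ or of $t$, so that $\|\nabla F_1(t)\|_{L^\infty}$ increases from $\|\nabla f_{0,1}\|_{L^\infty}\le\sigma$ by an amount that is $o(1)$ as $\sigma\to0$, uniformly on $[0,T]$. Choosing $\sigma=\sigma(c_0)$ small then closes the bound with the stated constant $10d\sigma$ (the factor $10d$ absorbing the $d$ partial derivatives and the harmless constants).

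Finally, the uniform bounds (i)--(iv) supply enough compactness (Aubin--Lions, with time-equicontinuity read off from the equation) to extract a limit $f=F_1+F_2$ solving \eqref{E1} and inheriting all the stated estimates, with $\|f(t)\|_{L^2}\le\|f_0\|_{L^2}$ surviving by weak lower semicontinuity. The main obstacle, as indicated, is step (iv): the $\dot W^{1,\infty}$ control of $F_1$ sits at the critical regularity and the relevant commutator estimate is genuinely borderline, so the argument must be arranged so that the (possibly large) parameter $c_0$ enters only through the length of the existence time $T$ and through the smallness threshold $\sigma$, and never as a multiplicative loss in the $10d\sigma$ bound itself.

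\medskip
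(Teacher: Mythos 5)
Your decomposition is genuinely different from the paper's, and the difference is where the argument breaks. You set up a \emph{linear coupled system} in which each of $F_1,F_2$ solves the same linear transport-diffusion equation with the common coefficient $\langle\Delta_\alpha f\rangle^{-(d+1)}$, $f=F_1+F_2$. The paper instead takes $F_2$ to be the solution of the (regularized) \emph{nonlinear} Muskat equation with the smooth initial datum $f_{0,2}$, and defines $F_1:=f-F_2$ as the difference of two nonlinear Muskat solutions. This is not a cosmetic choice: in the paper, the bound $\sup_t\|F_2(t)\|_{H^{10d}}\le R$ is supplied by the standard sub-critical local theory (Proposition \ref{Cauchyofapp}), which depends only on $\|f_{0,2}\|_{H^{10d}}$ and is entirely decoupled from $F_1$ and from the rough $f$. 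In your scheme, by contrast, the $H^{10d}$ energy estimate for $F_2$ has to go through a linear equation whose coefficient depends on the full $f$. The tame commutator/product estimates you invoke produce a term of the schematic form $\|f\|_{H^{10d}}\|F_2\|_{L^\infty}$ (or high-order derivatives of $\langle\Delta_\alpha f\rangle^{-(d+1)}$), and $\|f\|_{H^{10d}}$ contains $\|F_1\|_{H^{10d}}$, which is not controlled — $F_1$ is only Lipschitz, and the instantaneous smoothing you have at hand (Proposition \ref{propyiran} gives $\Delta^{2d}f\in L^2$ with $t$-weights, i.e.\ roughly $H^{4d}$) falls far short of $H^{10d}$ and blows up as $t\to 0$. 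So step (iv) on the $F_2$ side has a genuine circularity that the paper's choice of decomposition is specifically designed to avoid.

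Two smaller points. First, your claim (ii) $\|\nabla f(t)\|_{L^\infty}\le\|\nabla f_0\|_{L^\infty}$ is not true in general for the Muskat equation; as Remark \ref{rem1} explains, the maximum principle for the slope requires smallness or monotonicity. The paper proves only the weaker short-time bound $\|\nabla f\|_{L^\infty}\le 1+2\|\nabla f_0\|_{L^\infty}$ on $[0,\min\{T_1,t_1\}]$ (Proposition \ref{pro3}), and that is enough. Second, you correctly identify the delicate issue that $\|f(t)\|_{\dot C^{3/2}}\sim t^{-1}$ is not integrable, but your proposed remedy (``exploit the antisymmetry/null structure to gain an extra power'') is left vague. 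The paper's concrete mechanism is an interpolation that replaces the Hölder norm by its \emph{logarithm}: Lemma \ref{Interpolation} bounds $\int_{|\alpha|\le1}|E_\alpha F_1|\,|\alpha|^{-d}d\alpha$ by $\|\nabla F_1\|_{L^\infty}\log(2+\|\nabla F_1\|_{\dot C^{1/2}})+1$, and $\log(1/t)$ \emph{is} integrable in time; the factor $\log(2+\cdot)$ is then controlled via the weighted $\dot H^{4d+\frac12}$ energy inequality of Proposition \ref{propyiran}, whose proof does use a null-type structure but at a different place (to make the top-order contribution carry the small constant $\|\nabla g_1\|_{L^\infty}^{1/(8d-1)}$). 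So the logarithmic interpolation plus the $t$-weighted high-Sobolev energy estimate are the concrete ideas your sketch is missing.
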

\begin{remark}
	We remark that using the standard regularity theory, we can prove that the solution $ f$ of \eqref{E1}  in the class $L^\infty_{\text{loc}} ((0,T],C^{1+})$ will belong to $L^\infty_{\text{loc}} ((0,T],C^{\infty})$. Therefore, the solution in Theorem \ref{mainthm1} satisfies $f(t)\in C^\infty(\mathbb{R}^d)$ for any $0<t\leq T$.
\end{remark}
\begin{remark}
	It is possible that $f_{0,2}\in H^s(\mathbb{R}^d)$ for $s>1+\frac{d}{2}$ is enough for our results. But in this paper we will not discuss this in detail.
\end{remark}
Note that the decomposition \eqref{initialdecompose} holds for any  $f_0\in L^2(\mathbb{R}^d)\cap \dot C^1(\mathbb{R}^d)$, hence we have 
\begin{corollary}
	The statement in Theorem \ref{mainthm1} holds for initial data $f_0\in L^2(\mathbb{R}^d)\cap \dot C^1(\mathbb{R}^d)$.
\end{corollary}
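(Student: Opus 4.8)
The plan is to obtain the Corollary as an immediate consequence of Theorem~\ref{mainthm1}: the only thing to check is that every $f_0 \in L^2(\mathbb{R}^d) \cap \dot C^1(\mathbb{R}^d)$ admits a decomposition of the form \eqref{initialdecompose}. So, given such an $f_0$, I would set $c_0 := \|\nabla f_0\|_{L^\infty}$; if $c_0 = 0$ then $f_0$ is a constant lying in $L^2$, hence $f_0 \equiv 0$ and there is nothing to prove, so assume $c_0 > 0$ and let $\sigma = \sigma(c_0) \in (0,1]$ be the threshold provided by Theorem~\ref{mainthm1}. It then remains to produce a splitting $f_0 = f_{0,1} + f_{0,2}$ with $\|\nabla f_{0,1}\|_{L^\infty} \le \sigma$ and $f_{0,2} \in H^{10d}(\mathbb{R}^d)$; once this is done, Theorem~\ref{mainthm1} applies with this $c_0$ (note $\dot C^1 \subset \dot W^{1,\infty}$) and gives exactly the asserted conclusion.

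For the splitting I would mollify at a small scale: fix $\rho \in C_c^\infty(\mathbb{R}^d)$ with $\int \rho = 1$, put $\rho_\lambda(x) := \lambda^{-d}\rho(x/\lambda)$, and set $f_{0,2} := \rho_\lambda * f_0$ and $f_{0,1} := f_0 - \rho_\lambda * f_0$, with $\lambda > 0$ to be fixed. Since $f_0 \in L^2$ and $\widehat{f_{0,2}}(\xi) = \widehat\rho(\lambda\xi)\,\widehat{f_0}(\xi)$ with $\widehat\rho \in \mathcal{S}(\mathbb{R}^d)$, one has $f_{0,2} \in H^s(\mathbb{R}^d)$ for every $s \ge 0$, and in particular $f_{0,2} \in H^{10d}$ with a norm bounded in terms of $\lambda$ and $\|f_0\|_{L^2}$. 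For the remainder one writes
\[
\nabla f_{0,1} = \nabla f_0 - \rho_\lambda * \nabla f_0 = \int_{\mathbb{R}^d} \rho_\lambda(y)\bigl(\nabla f_0(\cdot) - \nabla f_0(\cdot - y)\bigr)\,dy ,
\]
so that $\|\nabla f_{0,1}\|_{L^\infty}$ is controlled by the modulus of continuity of $\nabla f_0$ at the scale $\lambda$. Because $f_0 \in \dot C^1(\mathbb{R}^d)$, the field $\nabla f_0$ is bounded and uniformly continuous, hence this quantity tends to $0$ as $\lambda \to 0$; choosing $\lambda$ small enough that it is $\le \sigma$ finishes the construction. (Equivalently, one could take $f_{0,2}$ to be a low-frequency Littlewood--Paley truncation $S_N f_0$ and let $N \to \infty$.)

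There is no real obstacle in this argument; the one place where the hypothesis is genuinely used is the claim that $\nabla f_0$ is approximated in $L^\infty$ by its mollifications, i.e.\ that $\nabla f_0$ is uniformly continuous---this is precisely what $f_0 \in \dot C^1(\mathbb{R}^d)$ encodes (equivalently, that $f_0$ lies in the closure of $\mathcal{S}$ in the homogeneous $C^1$ seminorm). Merely assuming $\nabla f_0 \in L^\infty \cap C^0$ without uniform continuity would not suffice to make $\|\nabla f_{0,1}\|_{L^\infty}$ small, so this is the crux; all the remaining estimates are routine.
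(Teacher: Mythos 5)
Your proof is correct and matches the paper's intent exactly: the paper simply asserts, in the sentence preceding the Corollary, that the decomposition \eqref{initialdecompose} holds for any $f_0\in L^2(\mathbb{R}^d)\cap\dot C^1(\mathbb{R}^d)$, and you supply the standard mollification (or low-frequency truncation) argument establishing this, correctly identifying that the essential point is the uniform continuity of $\nabla f_0$ that distinguishes $\dot C^1$ from $\dot W^{1,\infty}$. Nothing further is needed.
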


The following proposition implies that the solution in Theorem \ref{mainthm1} is unique. 
\begin{proposition}\label{propunique}
	For any $c>0$, there exists $\sigma=\sigma(c)>0$ such that, if $f, \bar f\in L^\infty ([0,T], \dot W^{1,\infty})$ are solutions of the Muskat equation \eqref{E1} with $\|\nabla  f\|_{L^\infty_{t,x}}\leq c$ and the solution $\bar f$ can be decomposed as 
	\begin{align*}
		\bar f=\bar f_1+\bar f_2, \quad\quad \text{with} \quad\quad \|\nabla\bar f_1\|_{L^\infty([0,T;\dot W^{1,\infty}])}\leq \sigma \quad\  \text{and}\quad\  \bar f_2\in L^\infty([0,T], H^{10d}).
	\end{align*}
	Then we have
	\begin{equation*}
		\sup_{t\in [0,T]}||f(t)-\bar{f}(t)||_{L^\infty}\leq C ||(f-\bar{f})|_{t=0}||_{L^\infty},
	\end{equation*}
	where the constant $C$ depends on $c$ and $||\nabla\bar{f}_2||_{L^\infty_{t,x}}$.
\end{proposition}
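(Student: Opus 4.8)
The plan is to derive a Gr\"onwall-type inequality for $\|f(t)-\bar f(t)\|_{L^\infty}$. Write $w=f-\bar f$, and subtract the two copies of \eqref{E1}. Since the nonlinearity is a quotient of $\alpha\cdot\nabla_x\Delta_\alpha f$ by $\langle\Delta_\alpha f\rangle^{d+1}$, the difference $\partial_t w$ splits naturally into two pieces: a "linear" part where the numerator carries $\alpha\cdot\nabla_x\Delta_\alpha w$ against the weight $\langle\Delta_\alpha f\rangle^{-(d+1)}$, and a "commutator" part where the weight difference $\langle\Delta_\alpha f\rangle^{-(d+1)}-\langle\Delta_\alpha\bar f\rangle^{-(d+1)}$ multiplies $\alpha\cdot\nabla_x\Delta_\alpha\bar f$. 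The first piece is the principal, parabolic-type term and the second is a remainder that should be controlled by $\|w\|_{L^\infty}$ times quantities depending on the regularity of $\bar f$.

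The key analytic step is a maximum-principle argument applied to the first piece. Fix $t$ and let $x_t$ be a point where $|w(t,\cdot)|$ attains (or nearly attains, after a standard doubling/penalization regularization) its maximum; at such a point $\nabla_x w(t,x_t)=0$, so $\nabla_x\Delta_\alpha w(t,x_t)=-\frac{1}{|\alpha|}\nabla_x w(t,x_t-\alpha)=\frac{1}{|\alpha|}\bigl(\nabla w(t,x_t-\alpha)-\nabla w(t,x_t)\bigr)$ up to the $|\alpha|$ normalization, and one rewrites $\alpha\cdot\nabla_x\Delta_\alpha w$ so that the integrand becomes, modulo the positive weight, $\frac{\alpha\cdot\bigl(\text{something}\bigr)}{|\alpha|^{d+2}}$ producing a sign. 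Concretely, integrating by parts in $\alpha$ (as in the derivation of the Muskat dissipation) turns the first piece at $x_t$ into something of the form $-\int (w(t,x_t)-w(t,x_t-\alpha)) K(t,x_t,\alpha)\,d\alpha$ with $K\ge 0$ coming from the weight, hence this piece is $\le 0$ when $w(t,x_t)$ is the max. The smallness of $\|\nabla\bar f_1\|_{L^\infty}$ (together with $\|\nabla f\|_{L^\infty}\le c$) is what guarantees the relevant weights and their $\alpha$-derivatives have the right sign/size so that the bad contributions from differentiating $\langle\Delta_\alpha f\rangle^{-(d+1)}$ in $\alpha$ are absorbed; this is the analogue of the "slope $<1$" or "small critical norm" condition that makes the comparison principle work, and it is exactly why the hypothesis on $\sigma$ is needed.

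For the commutator piece one uses $|\langle a\rangle^{-(d+1)}-\langle b\rangle^{-(d+1)}|\lesssim |a-b|$ with $a=\Delta_\alpha f$, $b=\Delta_\alpha\bar f$, so the weight difference is $\lesssim|\Delta_\alpha w|=\frac{|w(t,x_t)-w(t,x_t-\alpha)|}{|\alpha|}\le \frac{2\|w(t)\|_{L^\infty}}{|\alpha|}$ for large $|\alpha|$ and $\lesssim\|\nabla w(t)\|_{L^\infty}$ for small $|\alpha|$; pairing this against $\frac{\alpha\cdot\nabla_x\Delta_\alpha\bar f}{|\alpha|^d}$ and splitting the $\alpha$-integral at $|\alpha|=1$, the near-diagonal part needs $\bar f\in\dot C^{1+}$ (supplied by $\bar f_2\in H^{10d}\hookrightarrow C^{1+}$, the part $\bar f_1$ contributing only through $\|\nabla\bar f_1\|_{L^\infty}$ which is small and can again be absorbed), and the far part is bounded by $\|w(t)\|_{L^\infty}\|\nabla\bar f\|_{L^\infty}$. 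Here one must be slightly careful: $\|\nabla_x\Delta_\alpha w(t,x_t)\|$ still appears and is not pointwise controlled by $\|w\|_{L^\infty}$ near $\alpha=0$, so one should instead keep the first-piece and commutator-piece together and exploit that at the max point the full integrand combines into $(w(t,x_t)-w(t,x_t-\alpha))$ against an integrable-in-the-principal-value-sense kernel plus an $L^1_\alpha$ error of size $\|w(t)\|_{L^\infty}\|\bar f\|_{\dot C^{1+}\cap\dot W^{1,\infty}}$. This yields $\frac{d}{dt}\|w(t)\|_{L^\infty}\le C(c,\|\nabla\bar f_2\|_{L^\infty_{t,x}})\|w(t)\|_{L^\infty}$ in the sense of upper Dini derivatives, and Gr\"onwall finishes the proof with $C=e^{C(c,\|\nabla\bar f_2\|)T}$ absorbed into the stated constant.

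The main obstacle I expect is making the maximum-principle step rigorous: $f,\bar f$ are only assumed Lipschitz in $x$, so $w$ need not be $C^1$ and the max need not be attained; one must regularize (e.g. mollify, or use the sup over a sliding quadratic as in viscosity-solution arguments, or work with $\|w\|_{L^\infty}$ as an a.e.-defined quantity and test against approximate maximizers), control all error terms uniformly as the regularization is removed, and simultaneously verify that the principal-value integrals converge and that the integration by parts in $\alpha$ is legitimate at this low regularity. Keeping track of the dependence of the kernel $K$ on the hypothesis $\|\nabla f\|_{L^\infty}\le c$ (rather than on both slopes being small) — so that only $\bar f$, not $f$, needs the decomposition — is the point where the precise value of $\sigma=\sigma(c)$ gets pinned down.
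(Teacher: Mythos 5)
Your overall strategy matches the paper's: set $g=f-\bar f$, subtract, split into a ``linear'' piece with the weight $\langle\Delta_\alpha f\rangle^{-(d+1)}$ and a ``commutator'' piece where the weight difference multiplies $E_\alpha\bar f$, apply a maximum principle at $x_t$ where $g$ attains its sup, absorb the bad contributions into the dissipation, and close by Gr\"onwall. But your explanation of where $\sigma$-smallness is used, in the first half of your argument, is wrong in a way that matters.

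The paper works directly with the reformulated equation $\partial_t f=\int\frac{\alpha\cdot\nabla f-\delta_\alpha f}{\langle\Delta_\alpha f\rangle^{d+1}}\frac{d\alpha}{|\alpha|^{d+1}}$, so there is no integration by parts in $\alpha$ and no residual term from differentiating $\langle\Delta_\alpha f\rangle^{-(d+1)}$. At $x_t$ one has $\nabla g(x_t)=0$ and $\delta_\alpha g(x_t)\ge 0$, so the linear piece is $-\int\frac{\delta_\alpha g}{\langle\Delta_\alpha f\rangle^{d+1}}\frac{d\alpha}{|\alpha|^{d+1}}\le -\tilde C\int\frac{\delta_\alpha g}{|\alpha|^{d+1}}\,d\alpha$ with $\tilde C=\langle c\rangle^{-(d+1)}$: the sign, and a quantitative lower bound on the dissipation, hold automatically with no smallness assumption. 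Your claim that $\sigma$ is needed ``so that the bad contributions from differentiating $\langle\Delta_\alpha f\rangle^{-(d+1)}$ in $\alpha$ are absorbed'' has no counterpart in the paper; the weight on $f$ is handled by $\|\nabla f\|_{L^\infty}\le c$ alone (which is allowed to be large), and that is why only $\bar f$, not $f$, needs the decomposition.

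The $\sigma$-smallness is used solely in the commutator piece. By Lemma~\ref{Cm} it is $\lesssim\int|E_\alpha\bar f|\,|\delta_\alpha g|\,\frac{d\alpha}{|\alpha|^{d+1}}$. Writing $|E_\alpha\bar f|\le|E_\alpha\bar f_1|+|E_\alpha\bar f_2|$, the $\bar f_1$ contribution is bounded by $2\sigma\int\frac{\delta_\alpha g}{|\alpha|^{d+1}}\,d\alpha$ and is absorbed by the dissipation once $\sigma\lesssim\tilde C$; the $\bar f_2$ contribution is split at scale $\epsilon_0$, the near-diagonal part being absorbed (since $E_\alpha\bar f_2\to 0$ as $\alpha\to 0$ by smoothness of $\bar f_2$) and the far part bounded by $\epsilon_0^{-1}\|\nabla\bar f_2\|_{L^\infty}\|g\|_{L^\infty}$. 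Your instinct to ``keep the first piece and commutator piece together'' is the right one, but it should be carried out as this absorption: writing $|\Delta_\alpha g|=|\delta_\alpha g|/|\alpha|$ keeps the commutator in exactly the form the dissipation controls, and you never need the problematic small-$|\alpha|$ bound $\lesssim\|\nabla w\|_{L^\infty}$ that you flagged. Your rigorization concern about Lipschitz maxima is fair but is also left implicit in the paper.
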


We note that unless specified, all the integrals in this paper are understood as principal value integrals over $\mathbb{R}^d$.
We reformulate the equation as
\begin{equation*}
	\partial_t f(t,x)=\int\frac{\alpha\cdot\nabla f(t,x)-\delta_\alpha f(t,x)}{\left\langle \Delta_\alpha f(t,x)\right\rangle^{d+1}}\frac{d\alpha}{|\alpha|^{d+1}},
\end{equation*}
where $\delta_\alpha f(t,x)=f(t,x)-f(t,x-\alpha)$.
%
\vspace{0.2cm}\\

We organize the paper as follows. In the rest of this section, we will introduce the regularized system and notations which will be used throughout the paper.  In Section \ref{secprioriest} we will establish $L^2$ and Lipschitz estimates for the regularized system. Section \ref{ImproveREG} is devoted to improve the regularity, which helps to control the remainder terms in Section \ref{secprioriest}. We complete the proof of the main theorem in Section \ref{sectcomplete} and prove the  uniqueness result Proposition \ref{propunique} in Section \ref{sectionunique}. Finally, we prove the Proposition \ref{thm1} in the appendix.
\subsection{Regularization}
In order to rigorously justify the computations, we want to deal with smooth solutions. To achieve this, we introduce an approximate Muskat equation for which the Cauchy problem is easily studied, and whose solutions are expected to converge to solutions of the original Muskat equation. 

In this section, we follow the strategy introduced in \cite{Alazard2020}, which regularizes the Muskat equation depending on some parameters $\mu_1, \mu_2\in (0,1]$:
\begin{itemize}
	\item  Add a parabolic term of order 2 with a small viscosity of size $\mu_1$.
	\item Introduce a cut-off function in the singular integral to remove wave-length shorter than some parameter $\mu_2$.
\end{itemize}
More precisely, we introduce the following Cauchy problem
\begin{equation}\label{appMuskat}
	\left\{\begin{aligned}
		&\partial_t f(t,x)-\mu_1\Delta f(t,x)=\int\frac{\alpha\cdot\nabla f(t,x)-\delta_\alpha f(t,x)}{\left\langle \Delta_\alpha f(t,x)\right\rangle^{d+1}}(1-\chi\left({\alpha}/{\mu_2}\right))\frac{d\alpha}{|\alpha|^{d+1}},\\
		&f(0,x)=f_0(x),
	\end{aligned}\right.
\end{equation}
where $\chi: \mathbb{R}^d\rightarrow [0,1]$ is a smooth radial function such that
\begin{align*}
	\chi(y)=1 \quad\text{if}\quad 0\leq |y|\leq 1,\quad \chi(y)=0 \quad\text{if}\quad |y|\geq 2.
\end{align*}
Denote $\chi'(y)=\partial_r\chi(y)$. 
It is easy to verify that $x\cdot\nabla\chi(x)=\chi'(x)|x|$. For simplicity we denote $\chi_{\mu_2}(y)=\chi\left(\frac{y}{\mu_2}\right)$. We also assume that $-2\leq\chi'(x)\leq 0$. 
We have the following basic results\cite[Proposition 2.1]{Alazard2020}, \cite[Proposition 2.3]{Constantin2010}: 
\begin{proposition}
	For any $\mu_1, \mu_2\in (0,1]$ and any initial data $f_0\in L^2(\mathbb{R}^d) \cap \dot C^1(\mathbb{R}^d)$, the Cauchy problem \eqref{appMuskat} has a unique global solution  $f\in C^1([0,+\infty);H^\infty(\mathbb{R}^d))$.
\end{proposition}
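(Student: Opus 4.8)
The plan is to regard \eqref{appMuskat} as a semilinear heat equation $\partial_t f-\mu_1\Delta f=F_{\mu_2}(f)$, with
\[
F_{\mu_2}(f)(x):=\int\frac{\alpha\cdot\nabla f(x)-\delta_\alpha f(x)}{\langle \Delta_\alpha f(x)\rangle^{d+1}}\bigl(1-\chi_{\mu_2}(\alpha)\bigr)\frac{d\alpha}{|\alpha|^{d+1}},
\]
and to solve it by Duhamel's formula together with a contraction argument; the two regularizations turn $F_{\mu_2}$ into a tame, non-singular nonlinearity. The first task is the nonlinear estimates. Since $1-\chi_{\mu_2}$ is supported in $\{|\alpha|\ge\mu_2\}$ no principal value is needed, and since $L^2\cap\dot W^{1,\infty}(\mathbb{R}^d)\hookrightarrow L^\infty(\mathbb{R}^d)$ (Gagliardo--Nirenberg) the slope $\langle\Delta_\alpha f(x)\rangle$ is bounded. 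The key manipulation is the oddness cancellation $\int\frac{\alpha}{|\alpha|^{d+1}}\bigl(1-\chi_{\mu_2}(\alpha)\bigr)\,d\alpha=0$, which removes the lone term $\nabla f(x)\cdot\int\frac{\alpha}{|\alpha|^{d+1}}(1-\chi_{\mu_2})\,d\alpha$: after this one writes $F_{\mu_2}(f)=-C_{\mu_2}f+f*m_{\mu_2}+R_{\mu_2}(f)$ with $m_{\mu_2}(\alpha)=(1-\chi_{\mu_2}(\alpha))|\alpha|^{-d-1}\in L^1$ and $R_{\mu_2}(f)$ a remainder carrying the factor $\langle\Delta_\alpha f(x)\rangle^{-(d+1)}-1$, of size $O(|\delta_\alpha f(x)|^2/|\alpha|^2)$, and deduces
\[
\|F_{\mu_2}(f)\|_{L^2\cap L^\infty}\le C(d,\mu_2)\bigl(1+\|\nabla f\|_{L^\infty}\|f\|_{L^\infty}\bigr)\bigl(\|f\|_{L^2}+\|\nabla f\|_{L^\infty}\bigr),
\]
together with the matching Lipschitz bound $\|F_{\mu_2}(f)-F_{\mu_2}(g)\|_{L^2\cap L^\infty}\le C(d,\mu_2,M)\bigl(\|f-g\|_{L^2}+\|\nabla(f-g)\|_{L^\infty}\bigr)$ when $\|\nabla f\|_{L^\infty},\|\nabla g\|_{L^\infty}\le M$.

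With these estimates, local well-posedness is a standard fixed-point argument in $E_T:=C\bigl([0,T];L^2(\mathbb{R}^d)\cap\dot W^{1,\infty}(\mathbb{R}^d)\bigr)$ for the map $\Phi(f)(t)=e^{\mu_1 t\Delta}f_0+\int_0^t e^{\mu_1(t-s)\Delta}F_{\mu_2}(f(s))\,ds$. Using $\|e^{\mu_1\tau\Delta}g\|_{L^p}\le\|g\|_{L^p}$ ($p=2,\infty$), $\|\nabla e^{\mu_1\tau\Delta}g\|_{L^\infty}\le C(\mu_1\tau)^{-1/2}\|g\|_{L^\infty}$, the estimates above, and $\int_0^t(\mu_1(t-s))^{-1/2}\,ds\lesssim\mu_1^{-1/2}t^{1/2}$, one checks that $\Phi$ maps a suitable ball of $E_T$ into itself and contracts there for $T=T\bigl(\mu_1,\mu_2,\|f_0\|_{L^2\cap\dot W^{1,\infty}}\bigr)$ small; hence a unique solution exists in $E_T$, and uniqueness in $E_T$ follows directly from the Lipschitz bound and Gronwall. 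Since $e^{\mu_1\tau\Delta}$ preserves uniform continuity, the $\dot C^1$ regularity of $f_0$ is inherited by $f(t)$.

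For global existence I would invoke the a priori bounds for \eqref{appMuskat}: the energy identity $\tfrac{d}{dt}\|f\|_{L^2}^2=-2\mu_1\|\nabla f\|_{L^2}^2+2\langle F_{\mu_2}(f),f\rangle\le0$ (the pairing being, after the usual antisymmetrization in $x\leftrightarrow x-\alpha$, a manifestly non-positive double integral, exactly as in \cite{Cordoba2007Contour}) gives $\|f(t)\|_{L^2}\le\|f_0\|_{L^2}$, and the maximum-principle argument of \cite{Constantin2010,Alazard2020} gives $\|\nabla f(t)\|_{L^\infty}\le\|\nabla f_0\|_{L^\infty}$, the good sign of the kernel being preserved by both the viscosity $\mu_1\Delta$ and the cut-off. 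Since the local time above depends only on these two norms, the solution extends to $[0,\infty)$. Smoothness then follows by parabolic bootstrap: for $t>0$, $e^{\mu_1 t\Delta}f_0\in H^\infty$, while from $F_{\mu_2}(f)\in C\bigl([0,T];L^2\bigr)$ and $\|e^{\mu_1\tau\Delta}\|_{H^\sigma\to H^{\sigma+\theta}}\le C(\mu_1\tau)^{-\theta/2}$ for $\theta<2$ the Duhamel integral lies in $C\bigl((0,T];H^{3/2}\bigr)$; iterating, using that $F_{\mu_2}$ loses at most one derivative (a tame estimate in $H^s$) and the usual parabolic time-weights, upgrades $f$ to $C\bigl((0,\infty);H^s\bigr)$ for every $s$, and then, via the equation, to $C^1$ in time with values in $H^s$ for all $s$ on $(0,\infty)$; joint continuity down to $t=0$ in $L^2\cap\dot C^1$ is read off from the fixed-point construction.

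The main obstacle is the global-in-time control of $\|\nabla f(t)\|_{L^\infty}$: the nonlinear estimate is at least quadratic in $\|\nabla f\|_{L^\infty}$, so Duhamel's formula by itself yields only a local-in-time bound, and one genuinely needs the structural maximum-principle estimate, for which one must verify that neither the viscous term nor the cut-off spoils the sign structure of the C\'ordoba--Gancedo formulation; this verification is precisely the content of the results cited from \cite{Alazard2020,Constantin2010}. A secondary, but ubiquitous, technical point is the care needed to make $F_{\mu_2}(f)$ both well defined and $L^2$-valued for merely $L^2\cap\dot C^1$ data, since the $\alpha\cdot\nabla f(x)$ term is only conditionally convergent at infinity and is not separately in $L^2$ — both difficulties being resolved by the oddness cancellation described above.
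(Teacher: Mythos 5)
The paper does not actually prove this proposition; it cites it from \cite{Alazard2020} and \cite{Constantin2010}, so there is no internal argument to compare against. Your local part — Duhamel fixed point in $C([0,T];L^2\cap\dot W^{1,\infty})$ using the oddness cancellation to tame the integral near $|\alpha|=\infty$, followed by parabolic bootstrap to $H^\infty$ for $t>0$ — is the standard and correct route.

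The global-in-time step, however, contains a genuine gap. Your key a priori bound is the claim that ``the maximum-principle argument of \cite{Constantin2010,Alazard2020} gives $\|\nabla f(t)\|_{L^\infty}\le\|\nabla f_0\|_{L^\infty}$.'' This is \emph{false} in general, and is in fact the central obstruction that the whole paper is written to circumvent. Differentiating \eqref{appMuskat} gives \eqref{eq}; at a maximum point $y_{t,j}$ of $\partial_j f$, the first two integrals and the viscous term have the favourable sign, but the third integral, $-(d+1)\int\frac{E_\alpha f\,\Delta_\alpha f\,\Delta_\alpha\partial_j f}{\langle \Delta_\alpha f\rangle^{d+3}}d\eta(\alpha)$, does not; its size is comparable to $(d+1)\|\nabla f\|_{L^\infty}^2$ times the good dissipative term and so can only be absorbed when $\|\nabla f\|_{L^\infty}\lesssim (d+1)^{-1/2}$. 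This is precisely what Remark \ref{rem1} of the paper says: ``If $\|f\|_{Lip}$ is small, then $|E_\alpha f\,\Delta_\alpha f|$ is small. Hence the last term in the right hand side can be absorbed by the left hand side term.'' The references \cite{Constantin2010,Alazard2020} prove the Lipschitz maximum principle \emph{under a smallness hypothesis on the slope}, which you do not have here; with that hypothesis, Theorem \ref{mainthm1} would be an old result and this paper pointless. The $L^2$ claim is also slightly off: because of the cutoff $\tilde\chi_{\mu_2}$ the antisymmetrized pairing is not manifestly non-positive; the paper's Lemma in Section 2.1 shows the cutoff produces an error of size $\mu_2^{1/2}\|f\|_{\dot H^{3/4}}^2$ that must be absorbed by the viscosity, yielding only $\|f(t)\|_{L^2}\le e^t\|f_0\|_{L^2}$, and moreover that argument needs $\mu_2\ll\mu_1$, not assumed here. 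Both of these are fixable — a cruder $\mu_2$-dependent rate suffices — but the gradient bound is not.

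The correct route to global existence for \eqref{appMuskat} has to lean on the \emph{full second-order} dissipation $\mu_1\Delta$, which the true Muskat equation lacks. One uses the two a priori bounds that do hold unconditionally: the $L^\infty$ maximum principle of Remark \ref{Linfty}, and the $L^2$ energy bound with a $\mu$-dependent exponential rate. Because the cutoff renders $F_{\mu_2}(f)$ a lower-order operator (bounded, Lipschitz, and tame in each $H^s$ once $\|f\|_{L^\infty}$ and $\|\nabla f\|_{L^\infty}$ are held fixed), the $H^s$ energy estimate has the form $\frac{d}{dt}\|f\|_{H^s}^2+2\mu_1\|f\|_{\dot H^{s+1}}^2\le\mathcal G(\|f\|_{L^\infty},\|\nabla f\|_{L^\infty},\mu_2)\,\|f\|_{H^s}\|f\|_{H^{s+1}}$, and by Young's inequality the viscous term absorbs $\|f\|_{H^{s+1}}$. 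The remaining dependence on $\|\nabla f\|_{L^\infty}$ is then traded, via Gagliardo–Nirenberg interpolation against the conserved $\|f\|_{L^\infty}$, for a small power of $\|f\|_{H^s}$, and one checks the resulting differential inequality does not blow up on any finite interval. Your argument as written short-circuits this entire analysis with an invalid maximum principle, so the global-existence claim is not established.
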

\begin{proposition} \label{Cauchyofapp}
	Let $f_0\in H^{10d}(\mathbb{R}^d)$ satisfy $||f_0||_{H^{10d}(\mathbb{R}^d)}\leq c_0$. There exists $T=T(c_0)$ such that for any $\mu_1,\mu_2\in (0,1]$, the Cauchy problem \eqref{appMuskat} has a unique classical solution $f$ in $(0,T]$ satisfying $f(t)\in C^\infty(\mathbb{R}^d)\cap L^2(\mathbb{R}^d)$ for any $t\in (0,T]$. In particular, 
	\begin{align*}
		\sup_{t\in [0,T]}||f(t)||_{H^{10d}}\leq C(c_0).
	\end{align*}
	\begin{remark}\label{Linfty}
		Let $x_t$ satisfy $f(t,x_t)=\sup_{x} f(t,x)$. Then we have $\nabla f(x_t)=0$, $\Delta f(x_t)\leq 0$ and $\delta_\alpha f(x_t)>0$. From \eqref{appMuskat} one has $\frac{df(t,x_t)}{dt}\leq 0$. A similar argument holds for $\inf_{x} f(t,x)$. Hence the $L^\infty$ norm of the solution is non-increasing. By standard interpolation, $\|f\|_{L^\infty}$ can be controlled by $\|f_0\|_{L^2}$ and $\|\nabla f_0\|_{L^\infty}$.
	\end{remark}
\end{proposition}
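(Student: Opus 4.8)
The statement to prove is Proposition \ref{Cauchyofapp}: local existence (on a time interval depending only on $c_0$, uniformly in $\mu_1,\mu_2$) of a classical solution to the regularized equation \eqref{appMuskat} for $H^{10d}$ data, with a uniform $H^{10d}$ bound.

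\medskip

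The plan is to run a standard energy method for the regularized parabolic equation \eqref{appMuskat}, treating the right-hand side as a perturbation whose $H^{10d}$-to-$H^{10d-1}$ mapping is controlled uniformly in $\mu_1,\mu_2$. First I would recall from the first Proposition in the excerpt that for fixed $\mu_1,\mu_2$ a global smooth solution $f\in C^1([0,\infty);H^\infty)$ exists, so the only issue is the \emph{uniform in time, uniform in parameters} a priori bound; once that is in hand, existence on $[0,T]$ with $T=T(c_0)$ is immediate from the global solution, and the $H^{10d}$ bound follows from the a priori estimate combined with a continuity argument. So the real content is: setting $E(t)=\|f(t)\|_{H^{10d}}^2$, show $\frac{d}{dt}E(t)\le C(E(t)^{1/2})\,E(t)$ (some locally bounded superlinear bound, e.g. a polynomial in $E$) with $C$ independent of $\mu_1,\mu_2$, which yields $E(t)\le 2E(0)$ on $[0,T]$ for $T=T(c_0)$ by ODE comparison.

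\medskip

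To get this, apply $\partial_x^\beta$ for $|\beta|\le 10d$ to \eqref{appMuskat}, multiply by $\partial_x^\beta f$ and integrate. The viscosity term contributes $-\mu_1\|\nabla\partial_x^\beta f\|_{L^2}^2\le 0$, which we simply discard (this is why the estimate is uniform in $\mu_1$, and in fact one does not even need it). For the singular-integral term, write the integrand kernel as $K_\alpha(f)(x)=\frac{\alpha\cdot\nabla f(x)-\delta_\alpha f(x)}{\langle\Delta_\alpha f(x)\rangle^{d+1}}$; the numerator is a ``second-difference'' type quantity, bounded by $|\alpha|^{1+s}$-type factors times Hölder seminorms, and the cutoff $(1-\chi_{\mu_2})$ only makes the domain of integration smaller ($|\alpha|\ge\mu_2$), so any bound proved for the full operator with $\chi_{\mu_2}\equiv 0$ transfers with the same constant — this is the key point making everything uniform in $\mu_2$. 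Then one differentiates under the integral sign, uses the Leibniz rule and the Faà di Bruno formula for derivatives of $\langle\Delta_\alpha f\rangle^{-(d+1)}$, and splits into (i) a principal term where all $10d$ derivatives hit the numerator in the ``good'' way (this is essentially the linearization $-|D|\partial_x^\beta f$ plus lower order, giving a nonpositive or harmless contribution after integration, using that $\langle\cdot\rangle^{-(d+1)}\le 1$ and the commutator is lower order) and (ii) remainder terms with fewer derivatives on the top-order factor, each estimated by $\|f\|_{H^{10d}}$ times a polynomial in $\|f\|_{C^{1+\epsilon}}$ or $\|f\|_{H^{10d}}$ via Sobolev embedding $H^{10d}\hookrightarrow C^{1,\gamma}$ in dimension $d$ (since $10d-d/2>1$). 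Controlling the $\alpha$-integral near $\alpha=0$ uses the difference structure (two derivatives' worth of cancellation against $|\alpha|^{-d-1}$, leaving an integrable $|\alpha|^{-d+\text{const}}$), and near $\alpha=\infty$ uses $L^2$-boundedness of $f$ and $\nabla f$; here one should keep track of the fact that $\|f\|_{L^2}$ and $\|f\|_{L^\infty}$ are non-increasing (Remark \ref{Linfty}) so the far-field is controlled by the data alone.

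\medskip

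The main obstacle I expect is handling the top-order term (i): one must show that when all derivatives land on $f$ inside $\alpha\cdot\nabla f(x)-\delta_\alpha f(x)$, the resulting quantity, after multiplying by $\partial_x^\beta f$ and integrating in $x$ and $\alpha$, does not require the (discarded) viscosity — i.e. that the leading singular operator is, to leading order, $-|D|$ acting on $\partial_x^\beta f$, which is dissipative, and the corrections are genuinely lower order. This requires a careful commutator/paralinearization-type bookkeeping: writing $\partial_x^\beta(\delta_\alpha f)(x)=\delta_\alpha(\partial_x^\beta f)(x)$ and $\partial_x^\beta(\alpha\cdot\nabla f)=\alpha\cdot\nabla(\partial_x^\beta f)$ so that the top-order piece is literally $\int\frac{\alpha\cdot\nabla g-\delta_\alpha g}{\langle\Delta_\alpha f\rangle^{d+1}}\frac{d\alpha}{|\alpha|^{d+1}}$ with $g=\partial_x^\beta f$, then symmetrizing in $\alpha\mapsto-\alpha$ and integrating against $g$ to extract a nonnegative quadratic form (a weighted $\dot H^{1/2}$-type energy) up to commutator errors where a derivative of $\langle\Delta_\alpha f\rangle^{-(d+1)}$ appears — those errors carry a factor $\Delta_\alpha(\text{derivative of }f)$ which is again a first difference, recovering enough decay. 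Once the top-order contribution is shown to be $\le 0$ plus terms bounded by $P(\|f\|_{H^{10d}})$, combining with the remainder terms closes the differential inequality and the proposition follows. The $C^\infty$ regularity of the solution for $t>0$ is then inherited from the $C^1([0,\infty);H^\infty)$ solution of the first Proposition, and the $L^2$ membership from the preserved $L^2$ norm.
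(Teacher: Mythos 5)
The paper does not prove Proposition~\ref{Cauchyofapp}; it is stated, together with the preceding fixed-parameter global existence result, with a citation to \cite[Proposition 2.1]{Alazard2020} and \cite[Proposition 2.3]{Constantin2010}, so there is no in-paper proof to compare against. Your sketch --- reduce to a uniform-in-$\mu_1,\mu_2$ differential inequality $\tfrac{d}{dt}\|f\|_{H^{10d}}^2\le P(\|f\|_{H^{10d}})$ by applying $\partial^\beta$, pairing with $\partial^\beta f$, discarding the nonnegative viscosity contribution, symmetrizing the leading singular term to extract a nonnegative quadratic form, and controlling commutator and lower-order terms by Gagliardo--Nirenberg interpolation and Sobolev embedding --- is the standard subcritical energy estimate for Muskat and is essentially what the cited references do; the Leibniz/Fa\`a di Bruno bookkeeping you gesture at is of the same type the paper itself carries out in Proposition~\ref{thm1} for the critical case, so the roadmap is sound. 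One caveat deserves to be made explicit: your claim that any bound proved with $\chi_{\mu_2}\equiv 0$ transfers with the same constant is too strong, because the radial cutoff removes the small-scale part of the sign-definite quadratic form, so a coercivity lower bound $\gtrsim\|\partial^\beta f\|_{\dot H^{1/2}}^2$ does \emph{not} survive uniformly in $\mu_2$. What you actually need, and what is true, is weaker: after symmetrization both the nonnegative dissipation and the commutator error carry the \emph{same} truncation $(1-\chi_{\mu_2})$, so Cauchy--Schwarz absorbs the error into the truncated dissipation for every $\mu_2$, leaving a remainder polynomial in $\|f\|_{H^{10d}}$. Since you never invoke $\dot H^{1/2}$ coercivity elsewhere in the argument, it closes as intended, but as written the sketch could mislead a reader into thinking the cutoff is entirely innocuous rather than merely sign-compatible.
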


\subsection{Notations}
From Proposition \ref{Cauchyofapp}, there exists a  smooth solution of the equation \eqref{appMuskat} in $[0,T^\star]$ with smooth initial data $f_{0,2}$, we denote it by $F_2$. We also denote $f$ as the solution with initial data $f_0$. Without loss of generality, we assume there exists a constant $R>1$ depending on $\sigma$ such that 
\begin{equation}\label{F2smooth}
	\sup_{t\in [0,T^\star]}\|F_2(t)\|_{H^{10d}}\leq R.
\end{equation}
Denote
\[
F_1=f-F_2.
\]
The main idea in this paper is to estimate $F_1$.
For any $t\in [0,T^*]$, denote 
\begin{align}\label{defxtj}
	M_j(t)=\partial_j F_1(t,x_{t,j})=\sup_x \partial_j F_1(t,x),
\end{align}
\[
m_j(t)=-\partial_j F_1(t,\tilde{x}_{t,j})=\sup_x (-\partial_jF_1(t,x)), ~j=1,\cdots,d.
\]
We set 
\begin{equation}\label{defA}
	A(t)=\sum_{j=1}^d(|m_j(t)|+|M_j(t)|),
\end{equation}
\begin{equation}\label{defBj}
	B_j(t)=\int\frac{\Delta_\alpha \partial_jF_1(t,x_{t,j})}{\left\langle \Delta_\alpha f\right\rangle^{d+1}} \left(1-\chi\left(\frac{\alpha}{\mu_2}\right)\right)\frac{d\alpha}{|\alpha|^{d}}.
\end{equation}
The key estimate of the Lipschitz norm is that for any $t\in (0,T^\star]$
\begin{equation}\label{Ke}	
	\frac{dM_j}{dt}+\frac{1}{2}B_j\leq  C_0R^3 (1+A)+C_1AB_j+C R A [\log(2+\|\nabla F_1\|_{\dot C^\frac{1}{2}})],
\end{equation}
where $C_0=C(\| f_0\|_{L^2},\|\nabla f_0\|_{L^\infty})$ and $C_1,C$ are constants depend on dimension $d$. We note that we can estimate $m_j(t)$ in a similar manner. The above estimate is established in Section \ref{sectionLip}.  To control $[\log(2+\|\nabla F_1\|_{\dot C^\frac{1}{2}})]$ in $L^1$ in time, in Section \ref{ImproveREG} we use ideas in \cite{		ThomasHfirst,Alazard2020,Alazard2020endpoint,TH4}  to obtain an energy inequality (see \eqref{z11}) in $H^{s}$ with $s$ large.


We note that without specified, we use $C$ to denote constants only depend on the dimension $d$, the value of $C$ may be different from line to line. We introduce the notation $a\lesssim b$, which means that there exists a constant $C$ such that $a\leq Cb$. We denote $a\lesssim_m b$ if the implicit constant also depends on $m$. 		

We introduce the following elementary inequality which will be used frequently in our estimates.
\begin{lemma}\label{Cm}
	For any $a, b\in\mathbb{R}$ and any $m\geq 1$, we have the following inequalities
	\begin{equation}\label{eqCm}
		\left|\frac{1}{\langle a\rangle^m}-	\frac{1}{\langle b\rangle^ m}\right|+
		\left|\frac{a}{\langle a\rangle^{m+3}}-	\frac{b}{\langle b\rangle^ {m+3}}\right|\lesssim_m|a-b|.
	\end{equation}
\end{lemma}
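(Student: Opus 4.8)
The plan is to estimate the two differences separately, in each case reducing to a one–dimensional mean value argument for a smooth, globally Lipschitz function of a single real variable. For the first term, set $g(a) = \langle a\rangle^{-m} = (1+a^2)^{-m/2}$. Then $g'(a) = -m a \langle a\rangle^{-m-2}$, and since $|a|\le \langle a\rangle$ we get $|g'(a)| \le m \langle a\rangle^{-m-1} \le m$ for all $a\in\mathbb{R}$ (using $m\ge 1$, so the exponent $-m-1$ is negative and $\langle a\rangle\ge 1$). Hence $g$ is $m$-Lipschitz and $|g(a)-g(b)| \le m|a-b|$, which is the first half of \eqref{eqCm} with implicit constant $m$.

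For the second term, set $h(a) = a\langle a\rangle^{-m-3}$. Differentiating, $h'(a) = \langle a\rangle^{-m-3} - (m+3) a^2 \langle a\rangle^{-m-5} = \langle a\rangle^{-m-5}\big(\langle a\rangle^2 - (m+3)a^2\big)$. Since $\langle a\rangle^2 = 1+a^2$, the bracket equals $1 + a^2 - (m+3)a^2 = 1 - (m+2)a^2$, whose absolute value is bounded by $1 + (m+2)a^2 \le (m+2)\langle a\rangle^2$ (again using $\langle a\rangle^2\ge 1$). Therefore $|h'(a)| \le (m+2)\langle a\rangle^{-m-3} \le m+2$ since $-m-3<0$ and $\langle a\rangle\ge 1$. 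So $h$ is $(m+2)$-Lipschitz and $|h(a)-h(b)| \le (m+2)|a-b|$. Adding the two bounds gives \eqref{eqCm} with an implicit constant of the form $C m$.

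There is essentially no obstacle here; the only point requiring a little care is to confirm that the derivative bounds are uniform in the variable (not just locally bounded), which is exactly what the negative exponents $-m-1$ and $-m-3$ guarantee once we use $\langle \cdot\rangle\ge 1$. One could alternatively avoid calculus entirely by splitting into the regions where $|a|,|b|$ are both $\le 1$, both $\ge 1$, or straddle $1$, but the mean value approach is cleaner and makes the $m$-dependence of the constant transparent.
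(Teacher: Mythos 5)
Your proof is correct and takes essentially the same approach as the paper, which simply observes that both functions are uniformly Lipschitz on $\mathbb{R}$; you supply the explicit derivative bounds that the paper leaves to the reader.
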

\begin{proof}
	It is easy to verify that functions $\frac{1}{\langle z\rangle^m}$ and $\frac{z}{\langle z\rangle^{m+3}}$ are both uniformly Lipschitz on $\mathbb{R}$. Then we get the results.
\end{proof}
\vspace{0.5cm}
\section{A priori estimates}\label{secprioriest}
In this section, we establish a priori estimates for smooth solutions of the regularized Muskat equation \eqref{appMuskat}.
\subsection{Estimate of the $L^2$ norm}
\begin{lemma}
	Assume $0<\mu_2\ll \mu_1<1$, let $f$ be a solution of the Cauchy problem \eqref{appMuskat} with initial data $f_0$, then  for any $t>0$, there holds
	\begin{align}\label{L2max}
		\|f(t)\|_{L^2}\leq e^{ t}\|f_0\|_{L^2}.
	\end{align}
\end{lemma}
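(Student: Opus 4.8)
The plan is to estimate the time derivative of $\frac{1}{2}\|f(t)\|_{L^2}^2$ and apply Gronwall's inequality. Multiplying equation \eqref{appMuskat} by $f(t,x)$ and integrating in $x$ over $\mathbb{R}^d$, we obtain
\[
\frac{1}{2}\frac{d}{dt}\|f(t)\|_{L^2}^2 = -\mu_1\|\nabla f(t)\|_{L^2}^2 + \int_{\mathbb{R}^d} f(t,x)\int\frac{\alpha\cdot\nabla f(t,x)-\delta_\alpha f(t,x)}{\langle\Delta_\alpha f(t,x)\rangle^{d+1}}(1-\chi(\alpha/\mu_2))\frac{d\alpha}{|\alpha|^{d+1}}\,dx.
\]
The viscosity term $-\mu_1\|\nabla f\|_{L^2}^2$ is nonpositive and can be discarded (or kept for later use). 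So the core is to bound the double integral by $C\|f(t)\|_{L^2}^2$ with a constant $C$ that is universal (in fact, the statement wants the constant to be $1$, so $2$ after factoring the $\frac12$, but one should check whether a mild symmetrization is needed to get exactly this).

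First I would split the nonlinear term into its two pieces, writing the double integral as $I_1 - I_2$ with $I_1$ containing $\alpha\cdot\nabla f(t,x)$ and $I_2$ containing $\delta_\alpha f(t,x)$. The natural move is to symmetrize in $\alpha\mapsto-\alpha$ and to use the change of variables $x\mapsto x-\alpha$ in the $I_2$ integral so that $\delta_\alpha f$ gets paired against $\delta_\alpha$ of the test function; combined with the $\alpha\cdot\nabla f$ term this produces a quadratic form in $\delta_\alpha f$ against the kernel $\langle\Delta_\alpha f\rangle^{-d-1}|\alpha|^{-d-1}(1-\chi(\alpha/\mu_2))$. Because $\langle\Delta_\alpha f\rangle^{-d-1}\le 1$ is bounded and $1-\chi(\alpha/\mu_2)$ is supported on $|\alpha|\gtrsim\mu_2$, all the integrals here are genuinely convergent (no principal value subtlety survives since $\mu_2>0$), so this is a convolution-type estimate: after symmetrization the quadratic form is essentially $\iint \frac{(\delta_\alpha f(x))^2}{\langle\cdots\rangle^{d+1}}\frac{d\alpha\,dx}{|\alpha|^{d+1}}$ type expressions — but one must track the sign. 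In fact the cleanest route is to observe that the commutator/symmetrized form with the $x\mapsto x-\alpha$ substitution yields an expression controlled by $\|f\|_{L^2}\cdot\|\,|D|^{?}f\|$-type quantities that, thanks to the cutoff removing short wavelengths, degrades only to $\mu_2^{-(\cdots)}\|f\|_{L^2}^2$; but since the claimed bound has constant independent of $\mu_2$, the better argument is the maximum-principle-flavored one: rewrite so that no derivative lands net on $f$ after integrating by parts in $x$ using $\alpha\cdot\nabla_x f(t,x) = \frac{d}{ds}\big|_{s=0} f(t,x+s\alpha)$ structure, turning $I_1$ into an integral of $f(x)$ against $f(x)$ shifted, bounded by Young's inequality.

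The main obstacle, and the step I expect to need care, is obtaining a constant that does not blow up as $\mu_2\to 0$. A crude bound using $|1-\chi(\alpha/\mu_2)|\le 1$ and $\langle\Delta_\alpha f\rangle^{-d-1}\le1$ handles $|\alpha|\ge1$ trivially (the kernel $|\alpha|^{-d-1}$ is integrable there, giving $C\|f\|_{L^1}\|\nabla f\|_{L^\infty}$-type terms — but that needs extra norms, which we don't want), so for $|\alpha|\le1$ one genuinely needs to exploit the numerator cancellation $\alpha\cdot\nabla f(x)-\delta_\alpha f(x) = \int_0^1(\nabla f(x)-\nabla f(x-s\alpha))\cdot\alpha\,ds = O(|\alpha|^{1+\epsilon})$ only if $f\in C^{1+\epsilon}$, which is not assumed uniformly — so the honest argument symmetrizes to kill the bad part. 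Concretely: after the $\alpha\to-\alpha$ symmetrization, $2I_2 = \int\!\!\int \frac{(\delta_\alpha f(x)+\delta_{-\alpha}f(x))f(x)}{\langle\Delta_\alpha f\rangle^{d+1}}(1-\chi(\alpha/\mu_2))\frac{d\alpha\,dx}{|\alpha|^{d+1}}$ where $\delta_\alpha f(x)+\delta_{-\alpha}f(x) = 2f(x)-f(x-\alpha)-f(x+\alpha)$ is a second difference, and then one more $x$-integration by parts or a Cauchy–Schwarz in the measure $\frac{1-\chi(\alpha/\mu_2)}{|\alpha|^{d+1}}d\alpha\,dx$ — but the cleanest is to bound $\big|\int f(x)(2f(x)-f(x-\alpha)-f(x+\alpha))\,dx\big| = \big|\,\widehat{\text{stuff}}\,\big| \le \int |\hat f(\xi)|^2 \cdot 2(1-\cos(\alpha\cdot\xi))\,d\xi \le 4\|f\|_{L^2}^2$, uniformly in $\alpha$, and then $\int_{|\alpha|\ge\mu_2}\frac{4\|f\|_{L^2}^2}{|\alpha|^{d+1}}d\alpha$ still diverges logarithmically at... no: it's $|\alpha|^{-d-1}$ near $\alpha=0$, which diverges — so one must instead use $2(1-\cos(\alpha\cdot\xi))\le |\alpha\cdot\xi|^2$ for small $|\alpha|$ and $\le 4$ for large $|\alpha|$, making $\int \min(|\alpha|^2|\xi|^2, 4)\frac{d\alpha}{|\alpha|^{d+1}} \lesssim |\xi|$, which reintroduces an $\dot H^{1/2}$ norm that is not controlled — hence the parabolic term $\mu_1\|\nabla f\|_{L^2}^2$ cannot absorb it at scale independent of $\mu_1$ either. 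Therefore I believe the correct final argument is the structural one that avoids all of this: integrate the $I_1$ term by parts in $x$ to move $\alpha\cdot\nabla_x$ off, noting $\int \alpha\cdot\nabla_x\big(\frac{f(x)}{\langle\Delta_\alpha f\rangle^{d+1}}\big)\,dx = 0$-type identities combined with the $\delta_\alpha$ term, and conclude $\frac{d}{dt}\|f\|_{L^2}^2 \le 2\|f\|_{L^2}^2$; the genuinely delicate bookkeeping — which I will not grind through here — is verifying that after symmetrization and the $x\mapsto x-\alpha$ shift the leftover terms involving $\partial_\alpha$ of $\langle\Delta_\alpha f\rangle^{-d-1}$ are exactly cancelled or sign-definite, leaving precisely the factor giving $e^t$.
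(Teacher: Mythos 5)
You correctly start by multiplying the regularized equation by $f$ and integrating, and you rightly sense that a crude bound on the nonlinear term produces uncontrolled quantities (you observe an $\dot H^{1/2}$-type norm appearing). But you never actually supply the argument: you explicitly defer ``the genuinely delicate bookkeeping'' and conjecture that the leftover terms after symmetrization and the shift $x\mapsto x-\alpha$ ``are exactly cancelled or sign-definite.'' That deferred step is the entire content of the lemma, and it does not come from the $\alpha\mapsto-\alpha$ symmetry in the way you sketch.

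The actual mechanism is different and quite specific. Setting $G(a)=\int_0^a\langle s\rangle^{-(d+1)}\,ds$, the integrand of the nonlinear term is identically $(x-\alpha)\cdot\nabla_x G(\Delta_{x-\alpha}f(x))\,(1-\chi((x-\alpha)/\mu_2))\,|x-\alpha|^{-d}$, so the nonlinearity is in exact divergence form with no symmetrization needed. After integrating by parts in $x$ and introducing $H(a)=\int_0^a G(s)\,ds$, the interior contribution reduces to
\[
\iint\bigl[H(\Delta_\alpha f)-G(\Delta_\alpha f)\Delta_\alpha f\bigr]\,(1-\chi(\alpha/\mu_2))\,\frac{d\alpha\,dx}{|\alpha|^{d-1}},
\]
and the key point is the pointwise inequality $H(s)-sG(s)\leq 0$, which holds because $G$ is nondecreasing with $G(0)=0$. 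This is the sign-definiteness you were hoping for, but it is a monotonicity property of the nonlinearity, not a consequence of evenness in $\alpha$. The remaining terms all carry a factor $\chi'(\alpha/\mu_2)$, hence are supported on $\mu_2\leq|\alpha|\leq 2\mu_2$; they are estimated by $\mu_2^{1/2}\|f\|_{\dot H^{3/4}}^2$, which is then absorbed (after interpolation and Young) by the viscosity $\mu_1\|\nabla f\|_{L^2}^2$ precisely because $\mu_2\ll\mu_1$. You argue against exactly this, writing that ``the parabolic term $\mu_1\|\nabla f\|_{L^2}^2$ cannot absorb it at scale independent of $\mu_1$,'' but the lemma makes no claim of $\mu_1$-independence: the hypothesis $\mu_2\ll\mu_1$ is there so the viscosity can beat the cutoff-boundary contribution. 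Your proposal therefore misidentifies both the source of the cancellation (a $G$/$H$ convexity identity rather than symmetrization) and the role of the $\mu_1$-viscosity versus $\mu_2$-localization interplay, and the proof as written has a genuine gap at the decisive step.
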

\begin{proof}
	We refer the readers to \cite{Constantin2010} and \cite{Constantin2013} for the $L^2$  maximum principle of the original Muskat equation. 
	For simplicity, we denote $\tilde\chi_{\mu_2}(\alpha)=1-\chi\left(\frac{\alpha}{\mu_2}\right)$. We can rewrite \eqref{appMuskat} as 
	$$
	\partial_t f(x)-\mu_1\Delta f(x)
	=\int(x-\alpha)\cdot\nabla_x G(\Delta_{x-\alpha}f(x))\tilde\chi_{\mu_2}\left(x-\alpha\right)\frac{d\alpha }{|x-\alpha|^{d}}
	$$
	with $
	G(a)=\int_0^a\frac{1}{\langle s\rangle^{d+1}}ds.
	$\\
	We multiply the above equation by $f$, integrate over $dx$, and use integration by parts to observe
	\begin{equation*}
			\begin{aligned}
			\frac{1}{2}\frac{d}{dt}\|f\|_{L^2}^2+\mu_1\|\nabla f\|_{L^2}^2&=-\iint G(\Delta_{x-\alpha} f)(x-\alpha)\cdot\nabla_x f(x)\tilde\chi_{\mu_2}\left(x-\alpha\right)\frac{d\alpha }{|{x-\alpha}|^d} dx\\
			&\quad\quad+\iint\mu_2^{-1}\chi'\left(\frac{x-\alpha}{\mu_2}\right)G(\Delta_{x-\alpha} f)f(x)\frac{d\alpha }{|{x-\alpha}|^{d-1}} dx\\
			&=:K_1+K_2.
		\end{aligned}
	\end{equation*}
	Define the function $H$  by 
	$
	H(a)=\int_0^aG(s)ds.
	$
	It is easy to observe that 
	\begin{align*}
		K_1=&-\iint(x-\alpha)\cdot\nabla_x H(\Delta_{x-\alpha}f(x))\tilde\chi_{\mu_2}\left(x-\alpha\right)\frac{d\alpha }{|x-\alpha|^{d-1}} dx\\
		&-\iint G(\Delta_{x-\alpha} f(x))\Delta_{x-\alpha} f(x)\tilde\chi_{\mu_2}\left(x-\alpha\right)\frac{d\alpha }{|x-\alpha|^{d-1}} dx.
	\end{align*}
	Integrate by parts we obtain
	\begin{align*}
		K_1=&\iint [H(\Delta_{\alpha}f(x))-G(\Delta_{\alpha} f(x))\Delta_{\alpha} f(x)]\tilde\chi_{\mu_2}\left(\alpha\right)\frac{d\alpha dx}{|\alpha|^{d-1}}\\
		&\quad\quad\quad\quad-\iint\mu_2^{-1}\chi'\left(\frac{\alpha}{\mu_2}\right)H(\Delta_{\alpha} f(x))\frac{d\alpha dx}{|{\alpha}|^{d-2}} .
	\end{align*}
	Since 
	$
	\sup_{s}	\left(H(s)-sG(s)\right)\leq 0$, one gets 
	\[
	K_1\leq -\iint\mu_2^{-1}\chi'\left(\frac{\alpha}{\mu_2}\right)H(\Delta_{\alpha} f(x))\frac{d\alpha }{|{\alpha}|^{d-2}} dx.
	\] 
	Then we have 
	\begin{align*}
		\frac{1}{2}\frac{d}{dt}\|f\|_{L^2}^2+\mu_1\|\nabla f\|_{L^2}^2
		\leq&\iint\mu_2^{-1}\chi'\left(\frac{\alpha}{\mu_2}\right)[G(\Delta_{\alpha} f(x))f(x)-H(\Delta_{\alpha} f(x))]\frac{d\alpha  dx}{|{\alpha}|^{d-2}}.
	\end{align*}
	Note  that $G(s)$ is an odd function, by a change of variable we have
	\begin{align*}
		&\iint\mu_2^{-1}\chi'\left(\frac{\alpha}{\mu_2}\right)G(\Delta_{\alpha} f(x))f(x)\frac{d\alpha dx}{|{\alpha}|^{d-1}}\\&\quad\quad\quad =\frac{1}{2}\iint\mu_2^{-1}\chi'\left(\frac{\alpha}{\mu_2}\right)G(\Delta_{\alpha} f(x))\Delta_{\alpha}f(x)\frac{d\alpha dx}{|{\alpha}|^{d-2}} .
	\end{align*}
	Then we combine $
	\sup_{s}	\left(H(s)-sG(s)\right)\leq 0$ and the fact that $-2\leq\chi'\left(\frac{\alpha}{\mu_2}\right)\leq 0$  to conclude that 
	\begin{align*}
		\frac{1}{2}\frac{d}{dt}\|f\|_{L^2}^2+\mu_1\|\nabla f\|_{L^2}^2&\lesssim\iint_{\mu_2\leq|\alpha|\leq 2\mu_2}\mu_2^{-1}G(\Delta_\alpha f(x))\Delta_\alpha f(x)\frac{d\alpha dx}{|\alpha|^{d-2}} \\
		&\lesssim\mu_2^\frac{1}{2}\iint_{\mu_2\leq|\alpha|\leq 2\mu_2}|\delta_\alpha f(x)|^2\frac{d\alpha dx}{|\alpha|^{d+\frac{3}{2}}} \\
		&\lesssim\mu_2^\frac{1}{2}\|f\|_{\dot H^\frac{3}{4}}^2.
	\end{align*}
	Combining this with Sobolev interpolation inequality and Young's inequality we obtain
	$$
	\frac{1}{2}\frac{d}{dt}\|f\|_{L^2}^2+\mu_1\|f\|_{\dot H^1}^2\leq C\mu_2^\frac{2}{3}\|f\|_{\dot H^1}^2+\frac{1}{2}\|f\|_{L^2}^2.
	$$
	Note that $\mu_2\ll \mu_1$, we can absorb the contribution of $ C\mu_2^\frac{2}{3}\|f\|_{\dot H^1}^2$ by the left hand side, then 
	$$
	\frac{d}{dt}\|f\|_{L^2}^2\leq\|f\|_{L^2}^2.
	$$
	By Gronwall's inequality, we get \eqref{L2max}.
\end{proof}
\vspace{0.5cm}
\subsection{Estimate of  the Lipschitz norm}\label{sectionLip}
In this subsection, we will prove the Lipschitz estimate \eqref{Ke}. \\
For simplicity, we denote $\tilde\chi_{\mu_2}(\alpha)=1-\chi\left(\frac{\alpha}{\mu_2}\right)$ and  $d\eta(\alpha)=\tilde\chi_{\mu_2}(\alpha)\frac{d\alpha}{|\alpha|^{d}}$. We define 
$$E_\alpha f(x)=\hat\alpha\cdot\nabla f(x)-\Delta_\alpha f(x).$$ Assume $t\in(0,T^\star].$ 
By taking one derivative $\partial_j=\partial_{x_j}$ in equation \eqref{appMuskat} we obtain
\begin{equation}\label{eq}
	\begin{aligned}
		\partial_t\partial_jf-\mu_1\partial_j\Delta f=&\int\frac{\hat\alpha\cdot\nabla\partial_j f}{\left\langle \Delta_\alpha f\right\rangle^{d+1}}d\eta(\alpha)-\int\frac{\Delta_\alpha \partial_jf}{\left\langle \Delta_\alpha f\right\rangle^{d+1}}d\eta(\alpha)\\
		&\quad\quad\quad\quad-(d+1)\int\frac{E_\alpha f \Delta_\alpha f\Delta_\alpha\partial_j f}{\left\langle \Delta_\alpha f\right\rangle^{d+3}} d\eta(\alpha),
	\end{aligned}
\end{equation}
where we denote $\hat\alpha=\frac{\alpha}{|\alpha|}$. 
Now we look at the above equation with $x=x_{t,j}$, where $x_{t,j}$ is defined in \eqref{defxtj}. Recall the definition of $B_j$ in \eqref{defBj}, we obtain 
\begin{equation}\label{eqinfty}
	\begin{aligned}
		\frac{d M_j}{dt}+B_j
		\leq&\int\frac{E_\alpha \partial_jF_2}{\left\langle \Delta_\alpha f\right\rangle^{d+1}} d\eta(\alpha)-(d+1)\int\frac{E_\alpha F_2\Delta_\alpha f\partial_j\Delta_\alpha f}{\left\langle \Delta_\alpha f\right\rangle^{d+3}} d\eta(\alpha)\\
		&\quad\quad\quad-(d+1)\int\frac{E_\alpha F_1\Delta_\alpha f\partial_j\Delta_\alpha f}{\left\langle \Delta_\alpha f\right\rangle^{d+3}} d\eta(\alpha)-\frac{dF_2}{dt}+\mu_1\partial_j\Delta F_2.
	\end{aligned}
\end{equation}
where we used the fact that $\nabla\partial_jF_1(t,x_{t,j})=0$ and $-\mu_1\partial_j\Delta F_1(t,x_{t,j})\geq0$. From \eqref{F2smooth} it is easy to check that 
\[
\left|\frac{dF_2(t,x_{t,j})}{dt}\right|+\left|\mu_1\partial_j\Delta F_2(t,x_{t,j})\right|\lesssim R.
\]
We denote the first three terms in \eqref{eqinfty} by $J_1,J_2,J_3$. 
For $J_1$, we have, 
\begin{align*}
	\int_{|\alpha|\leq1}\frac{\left|E_\alpha \partial_jF_2\right|}{\left\langle \Delta_\alpha f\right\rangle^{d+1}}\frac{d\alpha}{|\alpha|^{d}}\lesssim \|F_2\|_{\dot C^3}\int_{|\alpha|\leq1}\frac{d\alpha}{|\alpha|^{d-1}}\lesssim R.
\end{align*}
On the other hand,	for $|\alpha|\geq1$, we have
\begin{align*}
	\left|\int_{|\alpha|\geq1}\frac{\hat{\alpha}\cdot\nabla\partial_j F_2}{\left\langle \Delta_\alpha f\right\rangle^{d+1}} d\eta(\alpha)\right|
	&=\frac{1}{2}\left|\int_{|\alpha|\geq1}\hat{\alpha}\cdot\nabla\partial_j F_2\left(\frac{1}{\left\langle \Delta_\alpha f\right\rangle^{d+1}}-\frac{1}{\left\langle \Delta_{-\alpha} f\right\rangle^{d+1}}\right)d\eta(\alpha)\right|\\
	&\overset{\eqref{eqCm}}\lesssim R\int_{|\alpha|\geq1}|\Delta_\alpha f-\Delta_{-\alpha}f|\frac{d\alpha}{|\alpha|^{d}}\\
	&~~\lesssim R\|f\|_{L^\infty},
\end{align*}
and
\[
\left|\int_{|\alpha|\geq1}\frac{\Delta_\alpha \partial_jF_2}{\left\langle \Delta_\alpha f\right\rangle^{d+1}}d\eta(\alpha)\right|\leq\|F_2\|_{\dot C^1}\int_{|\alpha|>1}\frac{d\alpha}{|\alpha|^{d+1}}\lesssim R.
\]
By Remark \ref{Linfty} we have $\|f\|_{L^\infty}\leq C_0=C(\|f_0\|_{L^2},\|f_0\|_{\dot W^{1,\infty}})$.
Then we get
\begin{equation}\label{firstterm}
	J_1\leq C_0R.
\end{equation}
Similarly, for any $\delta>0$, it is easy to verify that 
\begin{align*}
	|E_\alpha F_2 \Delta_\alpha\partial_j f|
	&\lesssim|\alpha| R(R+\Delta_\alpha\partial_j F_1)\mathbf{1 }_{|\alpha|\leq\delta}+|\alpha|^{-1}R(R+A)\mathbf{1 }_{|\alpha|\geq\delta}.
\end{align*}
Hence we obtain
\begin{align}
	J_2&\lesssim \delta R\int_{|\alpha|\leq \delta}\frac{|\Delta_\alpha\partial_j F_1|}{\left\langle \Delta_\alpha f\right\rangle^{d+1}}\frac{d\eta(\alpha)}{|\alpha|^d}+R^2\int_{|\alpha|\leq  \delta}\frac{d\alpha}{|\alpha|^{d-1}}+R(R+A)\int_{|\alpha|\geq  \delta}\frac{d\alpha}{|\alpha|^{d+1}}\nonumber\\
	&\lesssim \delta RB_j+\delta R^2+\delta^{-1}R(R+A).\label{secondterm}
\end{align}
We can choose $\delta\leq CR^{-1}$ small enough such that the contribution of $\delta RB_j$ can be absorbed by $\frac{1}{2}B_j$ in the left hand side.
For $J_3$ we have
\begin{align}
	\nonumber&J_3\lesssim \int\frac{| E_\alpha F_1 \Delta_\alpha\partial_j F_1|}{\left\langle \Delta_\alpha f\right\rangle^{d+1}} d\eta(\alpha)+\left(\int_{|\alpha|\leq 1}
	+\int_{|\alpha|\geq 1}\right)\frac{ |E_\alpha F_1 \Delta_\alpha  \partial_jF_2|}{\left\langle\Delta_\alpha f \right\rangle^{d+1}} d\eta(\alpha)\\
	&\quad\lesssim  AB_j+R\int_{|\alpha|\leq 1}|E_\alpha F_1| \frac{d\alpha}{|\alpha|^d}+ R A. \label{third term}
\end{align}
Combining \eqref{firstterm}, \eqref{secondterm} and \eqref{third term} we get
\begin{equation}\label{finalest}
	\frac{d M_j}{dt}+\frac{1}{2}B_j\leq  C_0R^3 (1+A)+C_1AB_j+CR\int_{|\alpha|\leq 1}|E_\alpha F_1| \frac{d\alpha}{|\alpha|^d}
\end{equation}
for any $t\in (0,T^\star]$,	where $C_0=C(\| f_0\|_{L^2},\|\nabla f_0\|_{L^\infty})$ and $C_1,C$ are constants depend on dimension $d$.
Combining \eqref{finalest} and Lemma \ref{Interpolation} below we obtain \eqref{Ke}.\vspace{0.1cm}\\
\begin{remark}\label{rem1}
	We note that the Lipschitz estimate in 2D is simpler. In 2D, equation \eqref{eq} reads
	\begin{equation*}
		\begin{aligned}
			\partial_tf_x+\int_{\mathbb{R}}\frac{\delta_\alpha f_x}{\left\langle \Delta_\alpha f\right\rangle^{2}}\frac{d\alpha}{\alpha^2}=&\int_{\mathbb{R}}\frac{f_{xx}}{\left\langle \Delta_\alpha f\right\rangle^{2}}\frac{d\alpha}{\alpha}-2\int_{\mathbb{R}}\frac{E_\alpha f \Delta_\alpha f\delta_\alpha f_x}{\left\langle \Delta_\alpha f\right\rangle^{4}} \frac{d\alpha}{\alpha^2},
		\end{aligned}
	\end{equation*}
	where $\Delta_\alpha f(x)=\frac{f(x)-f(x-\alpha)}{\alpha}$.
	Let $x_t\in\mathbb{R}$ such that $f_x(x_t)=\sup_{x\in\mathbb{R}} f_x(x)$. Then we have $f_{xx}(x_t)= 0$ and $\delta_\alpha f_x(x_t)\geq 0$. If $\|f\|_{Lip}$ is small, then $|E_\alpha f \Delta_\alpha f|$ is small. Hence the last term in the right hand side can be absorbed by the left hand side term, which leads to the desired estimate $\partial_t\|f_x\|_{L^\infty}\leq 0$. We also mention the result of \cite{33}. For large monotone increasing initial data, we have $E_\alpha f(x_t)\geq 0$ and $\Delta_\alpha f(x_t)\geq 0$. Hence we have $\partial_t\|f_x\|_{L^\infty}\leq 0$. Similar arguments hold for monotone decreasing initial data.
\end{remark}
In order to control the remainder term in \eqref{finalest}, we introduce the following lemma
\begin{lemma}\label{Interpolation}
	For any function $g$, we have the following interpolation inequality
	\begin{align*}
		\sup_{x}	\int_{|\alpha|\leq 1} |E_\alpha g(x)|\frac{d\alpha}{|\alpha|^d}\lesssim\|\nabla g\|_{L^\infty}\log(2+\|\nabla g\|_{\dot C^{\frac{1}{2}}})+1.
\end{align*}
\end{lemma}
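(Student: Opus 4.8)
The integral to bound is $\int_{|\alpha|\le 1}|E_\alpha g(x)|\,\frac{d\alpha}{|\alpha|^d}$ where $E_\alpha g(x)=\hat\alpha\cdot\nabla g(x)-\Delta_\alpha g(x)=\hat\alpha\cdot\nabla g(x)-\frac{g(x)-g(x-\alpha)}{|\alpha|}$. The quantity $|\alpha|E_\alpha g(x)=\alpha\cdot\nabla g(x)-\delta_\alpha g(x)$ is a first-order Taylor remainder, so the natural estimate is $|E_\alpha g(x)|\lesssim \min\{\|\nabla g\|_{L^\infty},\ \|\nabla g\|_{\dot C^{1/2}}|\alpha|^{1/2}\}$: the first bound comes from $|\alpha\cdot\nabla g(x)|\le\|\nabla g\|_{L^\infty}|\alpha|$ and $|\delta_\alpha g(x)|\le\|\nabla g\|_{L^\infty}|\alpha|$; the second from writing $\alpha\cdot\nabla g(x)-\delta_\alpha g(x)=\int_0^1(\nabla g(x)-\nabla g(x-s\alpha))\cdot\alpha\,ds$ and using the $\dot C^{1/2}$ bound on $\nabla g$ to get $\le \|\nabla g\|_{\dot C^{1/2}}\int_0^1(s|\alpha|)^{1/2}\,ds\,|\alpha| \lesssim \|\nabla g\|_{\dot C^{1/2}}|\alpha|^{3/2}$, hence after dividing by $|\alpha|$ the stated $|\alpha|^{1/2}$ rate.

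**The splitting.** I would split the region $|\alpha|\le 1$ at a radius $r\in(0,1]$ to be chosen. For $|\alpha|\le r$ use the $\dot C^{1/2}$ bound:
\[
\int_{|\alpha|\le r}|E_\alpha g(x)|\frac{d\alpha}{|\alpha|^d}\lesssim \|\nabla g\|_{\dot C^{1/2}}\int_{|\alpha|\le r}\frac{d\alpha}{|\alpha|^{d-1/2}}\lesssim \|\nabla g\|_{\dot C^{1/2}}\,r^{1/2}.
\]
For $r\le|\alpha|\le 1$ use the $L^\infty$ bound:
\[
\int_{r\le|\alpha|\le1}|E_\alpha g(x)|\frac{d\alpha}{|\alpha|^d}\lesssim \|\nabla g\|_{L^\infty}\int_{r\le|\alpha|\le 1}\frac{d\alpha}{|\alpha|^{d}}\lesssim \|\nabla g\|_{L^\infty}\log\frac{1}{r}.
\]
The sum is $\lesssim \|\nabla g\|_{\dot C^{1/2}}r^{1/2}+\|\nabla g\|_{L^\infty}\log\frac1r$. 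Choosing $r=\min\{1,\ (2+\|\nabla g\|_{\dot C^{1/2}})^{-2}\}$ makes the first term $\lesssim 1$ and yields $\log\frac1r\lesssim \log(2+\|\nabla g\|_{\dot C^{1/2}})$, giving the bound $\lesssim \|\nabla g\|_{L^\infty}\log(2+\|\nabla g\|_{\dot C^{1/2}})+1$, which is exactly the claimed inequality (absorbing the multiplicative $\|\nabla g\|_{L^\infty}$ into the logarithm term). One should also double-check the degenerate cases: if $\|\nabla g\|_{\dot C^{1/2}}\le 1$ one just takes $r=1$ and the whole integral is $\lesssim\|\nabla g\|_{L^\infty}$, consistent with the right-hand side.

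**Main obstacle.** There is no serious analytic obstacle here — the inequality is a textbook interpolation/dyadic-splitting argument. The only points requiring care are: (i) making sure the Taylor-remainder estimate in $\dot C^{1/2}$ is applied on a scale where the $|\alpha|^{3/2}$ gain really holds (it does, uniformly in $x$, since it only uses the seminorm of $\nabla g$); (ii) checking that the principal-value nature of the original integral is irrelevant for this lemma, because $|E_\alpha g(x)|$ is integrated with its absolute value and is $O(|\alpha|^{1/2})$ near $\alpha=0$, so the integrand is absolutely integrable and no cancellation is needed; and (iii) tracking that the final constant depends only on $d$, so that the lemma can be fed into \eqref{finalest} to produce \eqref{Ke}. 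If one wanted the sharpest form one could instead optimize $r$ pointwise, but the crude choice above already suffices.
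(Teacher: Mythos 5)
Your proof is correct and follows essentially the same route as the paper: split $\{|\alpha|\le 1\}$ at a radius $r$, use the $\dot C^{1/2}$ bound $|E_\alpha g|\lesssim\|\nabla g\|_{\dot C^{1/2}}|\alpha|^{1/2}$ on the inner region and the Lipschitz bound on the outer region, then optimize $r\sim(2+\|\nabla g\|_{\dot C^{1/2}})^{-2}$. Your write-up merely spells out the Taylor-remainder estimate and integrability near $\alpha=0$ that the paper leaves implicit.
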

\begin{proof}
	Indeed, we can further split the integration into $|\alpha|\leq \lambda$ and $\lambda\leq|\alpha|\leq 1$, then
	\begin{align*}
		\text{LHS}\leq\lambda^{\frac{1}{2}}\|\nabla g\|_{\dot C^\frac{1}{2}} +\log (\lambda^{-1})\|\nabla g\|_{L^\infty} \lesssim\|\nabla g\|_{L^\infty}\log(2+\|\nabla g\|_{\dot C^\frac{1}{2}})+1,
	\end{align*}
	where we take $\lambda=(1+\|\nabla g\|_{\dot C^\frac{1}{2}})^{-2}$.
\end{proof}
\vspace{0.5cm}

We observe from the above lemma that, if we want to control the $\dot W^{1,\infty}$ norm of $F_1$, we need to improve the regularity. Before that, we first estimate the $L^\infty$ norm of $\nabla f$. More precisely, we have the following result
\begin{proposition}\label{pro3}
	Assume $||\nabla f_{0,1}||_{L^\infty}\leq \frac{1}{100d(d+1)}$. 	There exists $t_1=t_1(R,||\nabla f_0||_{L^\infty})$ such that   if  $	\sup_{\tau\in [0,T_1]}||\nabla F_1(\tau)||_{L^\infty}\leq 10d||\nabla f_{0,1}||_{L^\infty} $
	with  $0<T_1\leq T^\star$, then 
	\begin{align*}
		\sup_{\tau\in[0,\min\{T_1,t_1\}]}||\nabla f(\tau)||_{L^\infty}\leq 1+2||\nabla f_0||_{L^\infty}.
	\end{align*}
\end{proposition}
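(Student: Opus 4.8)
The plan is to split $\nabla f=\nabla F_1+\nabla F_2$ and control the two pieces separately over a short time interval. By the standing hypothesis, for every $\tau\in[0,T_1]$ one has $\|\nabla F_1(\tau)\|_{L^\infty}\le 10d\|\nabla f_{0,1}\|_{L^\infty}$, which is small. For $F_2$ the idea is that, since $F_2$ solves \eqref{appMuskat} with data $f_{0,2}$, lies in $C^1([0,T^\star];H^\infty)$, and satisfies $\sup_{[0,T^\star]}\|F_2\|_{H^{10d}}\le R$ by \eqref{F2smooth}, its gradient cannot move far from $\nabla f_{0,2}$ in short time. Writing, for each $x$, $\nabla F_2(\tau,x)=\nabla f_{0,2}(x)+\int_0^\tau\partial_t\nabla F_2(s,x)\,ds$ and taking the supremum in $x$ gives
\[
\|\nabla F_2(\tau)\|_{L^\infty}\le\|\nabla f_{0,2}\|_{L^\infty}+\tau\,\sup_{s\in[0,T^\star]}\|\partial_t\nabla F_2(s)\|_{L^\infty},
\]
and by \eqref{initialdecompose}, $\|\nabla f_{0,2}\|_{L^\infty}\le\|\nabla f_0\|_{L^\infty}+\|\nabla f_{0,1}\|_{L^\infty}$. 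Thus everything reduces to a bound on $\|\partial_t\nabla F_2(s)\|_{L^\infty}$ that is \emph{uniform} in $\mu_1,\mu_2\in(0,1]$.

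To get this bound I would use \eqref{eq} with $f$ replaced by $F_2$ (legitimate since $F_2$ solves \eqref{appMuskat}): for each $j$,
\[
\partial_t\partial_j F_2=\mu_1\Delta\partial_j F_2+\int\frac{E_\alpha\partial_j F_2}{\langle\Delta_\alpha F_2\rangle^{d+1}}\,d\eta(\alpha)-(d+1)\int\frac{E_\alpha F_2\,\Delta_\alpha F_2\,\Delta_\alpha\partial_j F_2}{\langle\Delta_\alpha F_2\rangle^{d+3}}\,d\eta(\alpha).
\]
The viscous term is harmless: $\mu_1\|\Delta\partial_j F_2\|_{L^\infty}\le\|F_2\|_{\dot C^3}\lesssim_d\|F_2\|_{H^{10d}}\le R$ by Sobolev embedding (using $\mu_1\le1$). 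The two singular integrals are handled exactly as the terms $J_1$ and $J_2$ in Section~\ref{sectionLip} (see \eqref{firstterm}, \eqref{secondterm}), only more simply, since here the ``inner'' function is $F_2$ throughout, so no large quantity (no analogue of $B_j$ or $A$) enters and nothing has to be absorbed: on $|\alpha|\le1$ one uses the order-$|\alpha|$ cancellations $|E_\alpha\partial_j F_2(x)|\lesssim|\alpha|\,\|F_2\|_{\dot C^3}$ and $|E_\alpha F_2(x)|\lesssim|\alpha|\,\|F_2\|_{\dot C^2}$ together with $|\Delta_\alpha F_2|,|\Delta_\alpha\partial_j F_2|\lesssim\|\nabla F_2\|_{L^\infty}$ and $d\eta(\alpha)\le|\alpha|^{-d}d\alpha$, so that each contribution is $\lesssim_d R^3\int_{|\alpha|\le1}|\alpha|^{1-d}d\alpha\lesssim_d R^3$; on $|\alpha|\ge1$ one symmetrizes the term $\int_{|\alpha|\ge1}\hat\alpha\cdot\nabla\partial_j F_2\,\langle\Delta_\alpha F_2\rangle^{-d-1}\,d\eta$ under $\alpha\mapsto-\alpha$ and uses Lemma~\ref{Cm} together with $|\Delta_\alpha F_2(x)-\Delta_{-\alpha}F_2(x)|\le 2\|F_2\|_{L^\infty}/|\alpha|$, while for the remaining pieces one uses the decay bounds $|\Delta_\alpha F_2(x)|\le 2\|F_2\|_{L^\infty}/|\alpha|$ and $|\Delta_\alpha\partial_j F_2(x)|\le 2\|\nabla F_2\|_{L^\infty}/|\alpha|$, valid for $|\alpha|\ge1$; all resulting integrands then decay at least like $|\alpha|^{-d-1}$, so these contributions are again $\lesssim_d R^3$. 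Since $\|F_2\|_{\dot C^3},\|\nabla F_2\|_{L^\infty},\|F_2\|_{L^\infty}\lesssim_d R$ (Sobolev embedding using \eqref{F2smooth}; for $\|F_2\|_{L^\infty}$ one may also invoke Remark~\ref{Linfty}), summing over $j$ yields $\|\partial_t\nabla F_2(s)\|_{L^\infty}\le C(d)\,R^3$ for all $s\in[0,T^\star]$, with no dependence on $\mu_1,\mu_2$.

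Assembling the estimates, for $\tau\le\min\{T_1,t_1\}$ we get
\[
\|\nabla f(\tau)\|_{L^\infty}\le 10d\|\nabla f_{0,1}\|_{L^\infty}+\|\nabla f_{0,2}\|_{L^\infty}+C(d)R^3\tau\le\|\nabla f_0\|_{L^\infty}+(10d+1)\|\nabla f_{0,1}\|_{L^\infty}+C(d)R^3\tau.
\]
Using $\|\nabla f_{0,1}\|_{L^\infty}\le\frac{1}{100d(d+1)}$ we have $(10d+1)\|\nabla f_{0,1}\|_{L^\infty}\le\frac{10d+1}{100d(d+1)}\le\frac{1}{10}$ for all $d\ge1$, so choosing $t_1=t_1(R):=\min\{1,(2C(d)R^3)^{-1}\}$ (which one may of course also allow to depend on $\|\nabla f_0\|_{L^\infty}$) gives $\|\nabla f(\tau)\|_{L^\infty}\le\|\nabla f_0\|_{L^\infty}+\frac{1}{10}+\frac{1}{2}\le 1+2\|\nabla f_0\|_{L^\infty}$, which is the claim.

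The main obstacle is the $\mu$-uniform bound on $\|\partial_t\nabla F_2\|_{L^\infty}$. A crude estimate of $\int\Delta_\alpha\partial_j F_2\,\langle\Delta_\alpha F_2\rangle^{-d-1}\,d\eta$ over $\mu_2\le|\alpha|\le1$, or of $\int\hat\alpha\cdot\nabla\partial_j F_2\,\langle\Delta_\alpha F_2\rangle^{-d-1}\,d\eta$, only produces a $\log(1/\mu_2)$ (and integrating by parts against the cut-off would produce a factor $\mu_2^{-1}$), which would make $t_1$ depend on the regularization parameters and destroy the argument. The remedy, which is the only nontrivial point, is to exploit the structure of the Muskat kernel — the order-$|\alpha|$ cancellation carried by $E_\alpha$ near the origin, and oddness together with the decay of $F_2$ at infinity — exactly as in the estimate of $J_1$ in Section~\ref{sectionLip}; everything else is elementary arithmetic.
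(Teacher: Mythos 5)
Your proof is correct, but it takes a genuinely different route from the paper's. The paper evaluates the differentiated equation \eqref{eq} at a maximum point $y_{t,j}$ of $\partial_j f$: the first integral on the right of \eqref{eq} vanishes because $\nabla\partial_j f(y_{t,j})=0$, the viscous term has a good sign, and the dangerous nonlinear term is split at a scale $\kappa\sim R^{-1}$ so that its small-$\alpha$ part can be absorbed by the dissipative integral $\int\frac{\delta_\alpha\partial_j f}{\langle\Delta_\alpha f\rangle^{d+1}}\frac{d\eta(\alpha)}{|\alpha|}$, precisely because $(20d\|\nabla f_{0,1}\|_{L^\infty}+\kappa R)(d+1)\le\frac12$ under the hypothesis $\|\nabla f_{0,1}\|_{L^\infty}\le\frac{1}{100d(d+1)}$. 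This yields the Riccati-type inequality $\partial_t\|\nabla f\|_{L^\infty}\lesssim R(1+\|\nabla f\|_{L^\infty})^2$, which is then integrated. You instead never touch $\nabla f$ at a max point: you bound $\nabla F_1$ by the standing hypothesis and bound $\nabla F_2$ by integrating $\partial_t\nabla F_2$ in time, which requires only a $\mu$-uniform $L^\infty$ bound on $\partial_t\nabla F_2$ obtained from the $F_2$-equation by direct kernel estimates (the $E_\alpha$ cancellation for $|\alpha|\le1$, symmetrization and Lemma~\ref{Cm} for $|\alpha|\ge1$), all of which are controlled outright by $\|F_2\|_{H^{10d}}\le R$. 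Your calculation is sound, including the symmetrization step and the $\mu$-uniformity. What each buys: the paper's maximum-principle argument is the same machinery it uses for the key Lipschitz estimate \eqref{Ke} in Section~\ref{sectionLip}, so it keeps the technique uniform; it also uses the size of $\|\nabla f_{0,1}\|_{L^\infty}$ structurally (to make the absorption go through) rather than merely arithmetically. Your argument is more elementary — no absorption, no differential inequality — because it exploits that $F_2$ is already fully controlled in $H^{10d}$, and it uses the specific constant $\frac{1}{100d(d+1)}$ only through the crude bound $(10d+1)\|\nabla f_{0,1}\|_{L^\infty}\le\frac{1}{10}$. The two give slightly different $R$-scalings of $t_1$ ($R^{-1}(1+\|\nabla f_0\|_{L^\infty})^{-1}$ for the paper, $R^{-3}$ for you), but both are of the allowed form $t_1(R,\|\nabla f_0\|_{L^\infty})$, so this is immaterial for Theorem~\ref{mainthm1}.
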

\begin{proof}
	Let $y_{t,j}$ satisfy $\partial_j f(t,y_{t,j})=\sup_{x}\partial_j f(t,x)$. Then one has $\nabla\partial_j f(y_{t,j})=0$, $\delta_\alpha\partial_j f>0$, and $\partial_j\Delta f(y_{t,j})\leq0$. We substitute $x=y_{t,j}$ in \eqref{eq}, then
	\begin{align*}
		&\partial_t\partial_j f+\int\frac{\delta_\alpha \partial_jf}{\left\langle \Delta_\alpha f\right\rangle^{d+1}}\frac{ d\eta(\alpha)}{|\alpha|}\leq-(d+1)\int\frac{E_\alpha f\Delta_\alpha f\Delta_\alpha\partial_j f}{\left\langle \Delta_\alpha f\right\rangle^{d+3}} d\eta(\alpha).
	\end{align*}
	We split the integral right hand side into small scales $|\alpha|\leq\kappa$ and large scales $|\alpha|\geq\kappa$ for some $\kappa>0$. As in previous discussions, we have
	\begin{align*}
		|E_\alpha f \Delta_\alpha\partial_j f|&\leq C|\alpha|^{-1}\|\nabla f\|_{L^\infty}^2\mathbf{1}_{|\alpha|\geq \kappa}+(2\|\nabla F_1\|_{L^\infty}+\kappa R)|\Delta_\alpha\partial_jf|\mathbf{1}_{|\alpha|\leq \kappa}.
	\end{align*}
	Hence one obtains for $x=y_{t,j}$
	\begin{align*}
		&\partial_t\partial_j f+\int\frac{\delta_\alpha \partial_jf}{\left\langle \Delta_\alpha f\right\rangle^{d+1}}\frac{ d\eta(\alpha)}{|\alpha|}\\
		&\quad\quad\quad\leq C\kappa^{-1}||\nabla f||_{L^\infty}^2+(20d||\nabla f_{0,1}||_{L^\infty}+\kappa R)(d+1)\int\frac{\delta_\alpha\partial_j f}{\left\langle \Delta_\alpha f\right\rangle^{d+1}} \frac{d\eta(\alpha)}{|\alpha|}.
	\end{align*}
	Let $\kappa=(4(d+1)R)^{-1}$, then $(20d||\nabla f_{0,1}||_{L^\infty}+\kappa R)(d+1)\leq \frac{1}{2}$. Then, the last term of RHS can be absorbed by LHS, one obtains 
	\begin{equation*}
		\partial_t\partial_j f\lesssim 
		||\nabla f||_{L^\infty}^2R.
	\end{equation*}
	Similar arguments hold for $\inf_x\partial_j f(t,x)$. 
	Take sum in $j$ we get
	\begin{equation*}
		\partial_t||\nabla f(t)||_{L^\infty}\leq 
		CR(||\nabla f(t)||_{L^\infty}+1)^2.
	\end{equation*}
	Let $t_1>0$ satisfy 
	$
	CRt_1=1/(2(1+||\nabla f_0||_{L^\infty})).$
	We observe that for any $t\leq \min\{T_1,t_1\}$, one has
	$$
	\int_{0}^{t}\partial_t\left(\frac{-1}{1+||\nabla f(t)||_{L^\infty}}\right)dt=\int_{0}^{t}\frac{\partial_t||\nabla f(t)||_{L^\infty}}{(||\nabla f(t)||_{L^\infty}+1)^2}dt\leq CRt_1.
	$$
	Hence we obtain
	$$
	\frac{1}{1+||\nabla f_0||_{L^\infty}}-\frac{1}{1+||\nabla f(t)||_{L^\infty}}\leq\frac{1}{2(1+||\nabla f_0||_{L^\infty})}.$$
	We get the estimate
	\begin{equation*}
		\sup_{\tau\in[0,\min\{T_1,t_1\}]}	||\nabla f(\tau)||_{L^\infty}\leq 1+2||\nabla f_0||_{L^\infty},
	\end{equation*}
	which completes the proof.
\end{proof}
\vspace{0.5cm}
\section{Improve the regularity}\label{ImproveREG}
This section is devoted to improve the regularity of the solution, which helps us to control the remainder terms in the Lipschitz estimate \eqref{finalest}. The main result is the following proposition
\begin{proposition}\label{propyiran}
	For any $r_0>0$, there exists $\sigma=\sigma(r_0)\in(0,1)$ such that, for any $T\in[0,T^\star]$, if 
	\begin{align*}
		\sup_{\tau\in [0,T]}||\nabla f(\tau)||_{L^\infty}\leq r_0 \quad\quad\text{and}\quad\quad 	\sup_{\tau\in [0,T]}\|\nabla F_1(\tau)\|\leq\sigma,
	\end{align*}
	then
	\begin{align}\label{z12}
		\sup_{t\in [0,T]}t^{2(7d-2)}||\Delta^{2d}f(t)||_{L^2}^2+\int_{0}^{T}s^{2(7d-2)}||\Delta^{2d} f(s)||_{\dot H^{\frac{1}{2}}}^2ds\leq C(r_0,R).
	\end{align}
	In particular, we have
	\begin{equation}\label{eqinthm}
		\sup_{t\in [0,T]}t||\nabla f(t)||_{\dot C^\frac{1}{2}}\leq C(r_0,R),
	\end{equation}
	and 
	\begin{equation}\label{Calpha}
		\int _0^{T} \log(2+\|\nabla F_1(t)\|_{\dot C^\frac{1}{2}}) dt \leq C(r_0,R) T^{\frac{1}{2}}.
	\end{equation}
\end{proposition}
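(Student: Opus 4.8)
The plan is to derive a high–order energy inequality for the regularized equation \eqref{appMuskat}, following the paralinearization/null-structure ideas of Alazard and the second author. Concretely, apply $\Delta^{2d}$ to \eqref{appMuskat}, pair with $\Delta^{2d} f$ in $L^2$, and track the terms carefully. The viscous term contributes $-\mu_1\|\nabla\Delta^{2d}f\|_{L^2}^2\le 0$ and is simply discarded. The main contribution comes from the singular integral: the leading piece behaves like $-c\,\|\Delta^{2d}f\|_{\dot H^{1/2}}^2$ (up to constants depending on $\langle\Delta_\alpha f\rangle^{-(d+1)}$, which are bounded below in terms of $r_0$), and this is what produces the good term on the left of \eqref{z12}. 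All the other commutator-type terms, where derivatives fall unevenly, must be bounded by $C(r_0,R)\bigl(1+\|\Delta^{2d}f\|_{L^2}^2\bigr)$ plus a small multiple of $\|\Delta^{2d}f\|_{\dot H^{1/2}}^2$ to be absorbed; here one crucially uses the splitting $f=F_1+F_2$, with $\|\nabla F_1\|_{L^\infty}\le\sigma$ small to control the worst quadratic-in-high-derivative terms (the coefficient of the dangerous term is proportional to $\|\nabla F_1\|_{L^\infty}$, so choosing $\sigma=\sigma(r_0)$ small makes it absorbable), while $F_2\in H^{10d}$ with $\|F_2(t)\|_{H^{10d}}\le R$ handles the lower-order inhomogeneous contributions. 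This yields a differential inequality of the form
\begin{equation*}
	\frac{d}{dt}\|\Delta^{2d}f(t)\|_{L^2}^2 + c\,\|\Delta^{2d}f(t)\|_{\dot H^{1/2}}^2 \le C(r_0,R)\bigl(1+\|\Delta^{2d}f(t)\|_{L^2}^2\bigr).
\end{equation*}

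Next I would convert this into the weighted estimate \eqref{z12} by the standard parabolic-smoothing trick: multiply by the weight $t^{2(7d-2)}$ and integrate in time. Since $\dot H^{1/2}$ controls $L^2$ after one integration in $x$ only up to the $L^2$ norm itself, one combines the good term with interpolation $\|\Delta^{2d}f\|_{L^2}\lesssim \|\Delta^{2d}f\|_{\dot H^{1/2}}^{2/3}\|\Delta^{2d}f\|_{\dot H^{-1/4}}^{1/3}$ type inequalities — or more simply, one first gets an a priori bound on $\sup_t\|f(t)\|_{H^{10d-\epsilon}}$ free of the weight from Proposition \ref{Cauchyofapp} applied to $F_2$ together with the smallness of $F_1$, and then bootstraps one extra half-derivative using the weight. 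The exponent $2(7d-2)$ is chosen precisely so that, when the weight is differentiated, $\frac{d}{dt}t^{2(7d-2)} = 2(7d-2)t^{2(7d-2)-1}$, the resulting term can be dominated via Young's inequality using lower-order weighted norms that have already been bounded inductively (one runs the argument through the scale $\Delta^k f$ for $k=1,\dots,2d$, each with its own weight exponent, the exponents increasing so the induction closes). A Grönwall argument on $[0,T]\subseteq[0,T^\star]$ then gives \eqref{z12} with constant $C(r_0,R)$ independent of $\mu_1,\mu_2$.

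Finally, \eqref{eqinthm} and \eqref{Calpha} are consequences of \eqref{z12} via Sobolev embedding and a short interpolation. Since $\Delta^{2d}f\in L^2$ with the weighted bound, $f(t)\in H^{4d}$ with $t^{7d-2}\|f(t)\|_{H^{4d}}\lesssim C(r_0,R)$ (after absorbing lower frequencies using $\|f(t)\|_{L^2}\le e^t\|f_0\|_{L^2}$); since $4d-\tfrac{d}{2} > \tfrac32 + \tfrac12$ in every dimension $d\ge1$, Sobolev embedding gives $\|\nabla f(t)\|_{\dot C^{1/2}}\lesssim \|f(t)\|_{H^{4d}}$, hence $t^{7d-2}\|\nabla f(t)\|_{\dot C^{1/2}}\le C$; absorbing the extra powers of $t$ (which only help near $t=0$, and for $t\ge1$ one has a direct bound) yields $t\|\nabla f(t)\|_{\dot C^{1/2}}\le C(r_0,R)$, which is \eqref{eqinthm}. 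For \eqref{Calpha}, split $[0,T]$ at $t=1$; on $[1,T]$ the integrand is bounded by a constant, contributing $\lesssim T\le T^{1/2}\cdot T^{1/2}$; on $[0,\min\{1,T\}]$ use $\|\nabla F_1(t)\|_{\dot C^{1/2}}\le\|\nabla f(t)\|_{\dot C^{1/2}}+\|\nabla F_2(t)\|_{\dot C^{1/2}}\le C(r_0,R)(1+t^{-1})$ and the elementary bound $\int_0^\tau \log(2+t^{-1})\,dt \lesssim \tau\log(2+\tau^{-1})\lesssim \tau^{1/2}$ for $\tau\le1$, to conclude.

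The main obstacle is the high-order energy estimate itself: one must identify the exact algebraic structure (the "null-type" cancellation compensating the parabolic degeneracy) that keeps the coefficient of the non-absorbable quadratic term proportional to $\|\nabla F_1\|_{L^\infty}$ rather than to $\|\nabla f\|_{L^\infty}$, since the latter is only bounded by $r_0$ and cannot be taken small. Getting this dependence right — so that smallness of $\sigma$, and not of $r_0$, suffices — is the crux, and it is exactly where the decomposition $f=F_1+F_2$ pays off; everything after that (the weighted Grönwall, the Sobolev embeddings, the time-integrability of the logarithm) is routine.
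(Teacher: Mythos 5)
Your outline correctly identifies the two structural ideas the paper uses: (i) a high-order energy estimate for $\Delta^{2d}f$ in which the ``dangerous'' quadratic term carries a coefficient proportional to $\|\nabla F_1\|_{L^\infty}$ (not $\|\nabla f\|_{L^\infty}$), so that smallness of $\sigma$ suffices — this is exactly what Proposition~\ref{thm1} delivers, with coefficient $\|\nabla g_1\|_{L^\infty}^{1/(8d-1)}+\varepsilon$; and (ii) a time weight $t^{m}$ to handle the fact that $\Delta^{2d}f_0\notin L^2$. However, there are two genuine gaps.

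The first concerns closing the weighted estimate. You suggest either an a priori unweighted bound on $\sup_t\|f(t)\|_{H^{10d-\epsilon}}$, or an induction over scales $\Delta^k f$, $k=1,\dots,2d$. The first option fails: $F_1$ is only small in $\dot W^{1,\infty}$, and $f_0\notin H^s$ for any $s>1$, so no unweighted high Sobolev norm of $f$ is finite at $t=0$ — this is precisely why the weight is essential, not a convenience. The induction over scales is unnecessary and the paper avoids it; it closes at the single scale $\Delta^{2d}$ by interpolating the error term against the already-controlled $L^2$ norm, namely $\|\Delta^{2d}f(s)\|_{L^2}^2\lesssim\|\Delta^{2d}f(s)\|_{\dot H^{1/2}}^{16d/(8d+1)}\|f_0\|_{L^2}^{2/(8d+1)}$, and then absorbs the high-exponent factor into the dissipation by Young, with $m=2(7d-2)$ chosen so that $\int_0^T s^{m-8d-1}\,ds<\infty$.

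The second gap is in the deduction of \eqref{eqinthm} from \eqref{z12} and it is a genuine error. You apply plain Sobolev embedding $\|\nabla f(t)\|_{\dot C^{1/2}}\lesssim\|f(t)\|_{H^{4d}}$ to get $t^{7d-2}\|\nabla f(t)\|_{\dot C^{1/2}}\le C$, and then claim to ``absorb the extra powers of $t$.'' This goes the wrong way: for $t\le1$ one has $t^{7d-2}\ll t$, so $t^{7d-2}\|\nabla f(t)\|_{\dot C^{1/2}}\le C$ is \emph{strictly weaker} than $t\|\nabla f(t)\|_{\dot C^{1/2}}\le C$ near $t=0$, and since the hypothesis only places $f_0$ in $\dot C^1$, there is no direct bound to fall back on there. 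The paper instead interpolates against the bounded Lipschitz norm:
\begin{equation*}
\|\nabla f(t)\|_{\dot C^{1/2}}\lesssim\|\Delta^{2d}f(t)\|_{L^2}^{\frac{1}{7d-2}}\,\|\nabla f(t)\|_{L^\infty}^{\frac{7d-3}{7d-2}}\lesssim_{r_0}\|\Delta^{2d}f(t)\|_{L^2}^{\frac{1}{7d-2}},
\end{equation*}
so that $\|\Delta^{2d}f(t)\|_{L^2}\lesssim t^{-(7d-2)}$ yields exactly $t\|\nabla f(t)\|_{\dot C^{1/2}}\le C(r_0,R)$. The hypothesis $\|\nabla f\|_{L^\infty}\le r_0$ is essential here and must be used; once this is fixed, your argument for \eqref{Calpha} (splitting at $t=1$ and using $\int_0^1\log(2+t^{-1})\,dt<\infty$ together with $T\lesssim_{T^\star}T^{1/2}$) goes through.
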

\begin{proof}
	Multiply equation \eqref{appMuskat} by test function $2\Delta^{2d}f$, and substitute $g=f$, $g_1=F_1$, $g_2=F_2$ in
	Proposition \ref{thm1} below,  we obtain that for any $t\in[0,T]$, there holds
	\begin{align}\nonumber
		\frac{d}{dt}||\Delta^{2d}f||_{L^2}^2+2\mu_1\|\Delta^{2d}\nabla f\|_{L^2}^2+&\iint \frac{|\delta_\alpha \Delta^{2d} f(x)|^2}{\left\langle r_0 \right\rangle^{d+1}} \frac{d\eta(\alpha) }{|\alpha|}dx\\
		&\lesssim_{r_0} (\sigma^{\frac{1}{8d-1}}+\varepsilon) ||\Delta^{2d} f||_{\dot H^{\frac{1}{2}}}^2+\mathcal{F}(R+\varepsilon^{-1})\label{z11}
	\end{align}	
	for any $\varepsilon\in (0,1)$,  where we denote $\mathcal{F}:(1,\infty)\to (1,\infty)$  to be some functions increasing and $\mathcal{F}(r)\to \infty$ as $r\to \infty$. The definition of $\mathcal{F}$ may be different from line to line. 
	Note  that 
	\begin{align*}
		\iint \frac{|\delta_\alpha \Delta^{2d} f(x)|^2}{\left\langle r_0 \right\rangle^{d+1}} \frac{d\eta(\alpha) }{|\alpha|}dx
		&=\frac{1}{\langle r_0\rangle^{d+1}}\|\Delta^{2d}f\|_{\dot{F}_{2,2}^\frac{1}{2}}-\iint \frac{|\delta_\alpha \Delta^{2d} f(x)|^2{\chi}\left(\frac{\alpha}{\mu_2}\right)}{\left\langle r_0 \right\rangle^{d+1}} \frac{d\alpha dx}{|\alpha|^{d+1}}.
	\end{align*}
	It is easy to verify that
	\begin{align*}
		\iint \frac{|\delta_\alpha \Delta^{2d} f(x)|^2{\chi}\left(\frac{\alpha}{\mu_2}\right)}{\left\langle r_0 \right\rangle^{d+1}} \frac{d\alpha }{|\alpha|^{d+1}}dx&\leq\mu_2^\frac{1}{2}\int_{|\alpha|\leq\mu_2} \|\delta_\alpha \Delta^{2d} f(x)\|_{L^2}^2\frac{d\alpha }{|\alpha|^{d+\frac{3}{2}}}\\
		&\lesssim \mu_2^\frac{1}{2}\|\Delta^{2d} f(t)\|_{\dot H^{\frac{1}{2}}}\|\Delta^{2d} f(t)\|_{\dot H^{1}}\\
		&\lesssim \mu_2^\frac{1}{2}\|\Delta^{2d} f(t)\|_{\dot H^{\frac{1}{2}}}^2+\mu_2^\frac{1}{2}\|\Delta^{2d} f(t)\|_{\dot H^{1}}^2,
	\end{align*}
	where we also used H\"{o}lder's inequality and Young's inequality. Note that $\mu_2\ll\mu_1$, hence the contribution of $\mu_2^\frac{1}{2}\|\Delta^{2d} f(t)\|_{\dot H^{1}}^2$ can be absorbed by the viscosity terms.
	Then \eqref{z11} leads to 
	$$
	\frac{d}{dt}||\Delta^{2d}f||_{L^2}^2+\frac{1}{\langle r_0\rangle^{d+1}}||\Delta^{2d} f||_{\dot H^{\frac{1}{2}}}^2\leq C(r_0) (\sigma^{\frac{1}{8d-1}}+\varepsilon+\mu_2^\frac{1}{2}) ||\Delta^{2d} f||_{\dot H^{\frac{1}{2}}}^2
	+\mathcal{F}(r_0+R+\varepsilon^{-1}).
	$$
	We can take $\sigma, \varepsilon, \mu_2$ small enough such that
	$$
	C(r_
	0)(\sigma^{\frac{1}{8d-1}}+\varepsilon+\mu_2^\frac{1}{2})\leq \frac{1}{2\langle r_0\rangle^{d+1}}.
	$$
	Then
	$$
	\partial_t||\Delta^{2d}f||_{L^2}^2+\frac{1}{2\langle r_0\rangle^{d+1}}||\Delta^{2d} f||_{\dot H^{\frac{1}{2}}}^2\leq \mathcal{F}(r_0+R)
	$$
	for any $t\leq T$.  For any $0\leq s<t\leq T$, integrate the above inequality in time we get
	$$
	||\Delta^{2d}f(t)||_{L^2}^2+\frac{1}{2\langle r_0\rangle^{d+1}}\int_{s}^{t}||\Delta^{2d} f(\tau)||_{\dot H^{\frac{1}{2}}}^2 d\tau\lesssim ||\Delta^{2d}f(s)||_{L^2}^2+\mathcal{F}(r_0+R) T.
	$$
	Then we multiply the above equation by $s^{m-1}$ and integrate for $s\in[0,t]$ to get
	\begin{align}
		\sup_{t\in [0,T]}t^{m}||\Delta^{2d}f(t)||_{L^2}^2+&\frac{1}{2\langle r_0\rangle^{d+1}}\int_{0}^{T}s^m||\Delta^{2d} f||_{\dot H^{\frac{1}{2}}}^2ds\nonumber\\
		&\leq  C\int_{0}^{T}s^{m-1}||\Delta^{2d}f(s)||_{L^2}^2ds+\mathcal{F}(r_0+R) T^{m+1}.\label{ineqintertime}
	\end{align}
	Applying the standard Sobolev interpolation inequality and the $L^2$ estimates \eqref{L2max}, one has
	\[
	||\Delta^{2d}f(s)||_{L^2}^2\lesssim ||\Delta^{2d}f(s)||_{\dot H^{\frac{1}{2}}}^{\frac{16d}{8d+1}}||f(s)||_{L^2}^{\frac{2}{8d+1}}\lesssim||\Delta^{2d}f(s)||_{\dot H^{\frac{1}{2}}}^{\frac{16d}{8d+1}}||f_0||_{L^2}^{\frac{2}{8d+1}}.
	\]
	By H\"{o}lder's inequality and Young's inequality,
	\begin{equation*}
		\int_{0}^{T}s^{m-1}||\Delta^{2d}f(s)||_{\dot H^{\frac{1}{2}}}^{\frac{16d}{8d+1}}ds\leq \frac{\int_{0}^{T}s^m||\Delta^{2d} f(s)||_{\dot H^{\frac{1}{2}}}^2ds}{4\langle r_0\rangle^{d+1}}+C(r_0)\int_0^{T}s^{m-8d-1}ds,
	\end{equation*}
	Note that the first term can be absorbed by the left hand side of \eqref{ineqintertime}. 
	To make the last term finite, we choose  $m=2(7d-2)>8d$, which leads to \eqref{z12}.
	
	The Gagliardo-Nirenberg inequality implies that
	$$
	||\nabla f(t)||_{\dot C^\frac{1}{2}}\lesssim||\Delta^{2d}f(t)||_{L^2}^{\frac{1}{7d-2}}||\nabla f(t)||_{L^\infty}^{\frac{7d-3}{7d-2}}\lesssim_{r_0}||\Delta^{2d}f(t)||_{L^2}^{\frac{1}{7d-2}}.
	$$
	Combining this with \eqref{z12} and the definition of $R$ in \eqref{F2smooth}, we obtain 
	\eqref{eqinthm} and \eqref{Calpha}. This completes the proof.
\end{proof}
Denote 
\begin{align}\label{defN}
	N(f,g)=\int\frac{\hat\alpha\cdot\nabla f(x)-\Delta_\alpha f(x)}{\left\langle \Delta_\alpha g\right\rangle^{d+1}}d\eta(\alpha).
\end{align}
We have the following proposition
\begin{proposition}\label{thm1} For any function $g=g_1+g_2$ with  $||g_1||_{Lip}\leq 1$ and $||\nabla g||_{L^\infty}\leq r_0$, there holds
	\begin{align*}\nonumber
		&\left|\iint\Delta^{2d}(N(g,g))\Delta^{2d} g(x)dx+\frac{1}{2}\iint \frac{|\delta_\alpha \Delta^{2d} g(x)|^2}{\left\langle \hat \alpha \cdot \nabla g(x) \right\rangle^{d+1}} \frac{d\eta(\alpha) }{|\alpha|}dx\right|\\&\quad\qquad\quad\quad\qquad\quad\lesssim_{r_0}  (||\nabla g_1||_{L^\infty}^{\frac{1}{8d-1}}+\varepsilon) ||\Delta^{2d} g||_{\dot H^{\frac{1}{2}}}^2+\mathcal{F}(||g_2||_{ H^{4d+\frac{1}{2}}}+\varepsilon^{-1})
	\end{align*}	
	for any $\varepsilon\in (0,1)$.
\end{proposition}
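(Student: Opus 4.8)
The plan is to differentiate the nonlinearity by the Leibniz rule, peel off the linearized principal part — which, after symmetrisation, produces the coercive seminorm on the left-hand side — and bound everything else by the right-hand side. Since $N(g,g)=\int\frac{E_\alpha g(x)}{\langle\Delta_\alpha g(x)\rangle^{d+1}}\,d\eta(\alpha)$ with $E_\alpha g=\hat\alpha\cdot\nabla g-\Delta_\alpha g$, and $\Delta^{2d}$ has constant coefficients and commutes with translations (so $\partial^\beta E_\alpha g=E_\alpha\partial^\beta g$), one writes
\[\Delta^{2d}(N(g,g))=\int\frac{E_\alpha(\Delta^{2d}g)(x)}{\langle\Delta_\alpha g(x)\rangle^{d+1}}\,d\eta(\alpha)+\mathcal R(x),\]
where $\mathcal R$ is a finite sum of terms carrying at least one derivative on the weight, of the schematic shape $\int E_\alpha(\partial^\beta g)\,\langle\Delta_\alpha g\rangle^{-(d+1)-k}\prod_{i=1}^k\Delta_\alpha(\partial^{\gamma_i}g)\,d\eta$ with $k\ge1$, $|\gamma_i|\ge1$ and $|\beta|+\sum|\gamma_i|=4d$.

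First I would handle the principal term. Pairing it with $\Delta^{2d}g$ and integrating in $x$, the antisymmetry $\alpha\mapsto-\alpha$ (recall $d\eta$ is even) together with the translation $x\mapsto x-\alpha$ and the identity $\langle-\Delta_\alpha g(x)\rangle=\langle\Delta_{-\alpha}g(x-\alpha)\rangle$ turns the $\Delta_\alpha$-half of $E_\alpha$ into $-\tfrac12\iint\frac{|\delta_\alpha\Delta^{2d}g(x)|^2}{\langle\Delta_\alpha g(x)\rangle^{d+1}}\frac{d\eta(\alpha)}{|\alpha|}dx$ and the $\hat\alpha\cdot\nabla$-half into an expression with no surviving derivative on $\Delta^{2d}g$. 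It then remains to (a) exchange the weight $\langle\Delta_\alpha g\rangle^{-(d+1)}$ for the claimed $\langle\hat\alpha\cdot\nabla g\rangle^{-(d+1)}$ — by Lemma \ref{Cm} the cost is $\lesssim|E_\alpha g(x)|\le 2\|\nabla g_1\|_{L^\infty}+\min(|\alpha|,1)\|g_2\|_{H^{4d+1/2}}$ — and (b) dispose of the symmetrised $\hat\alpha\cdot\nabla$-half, again reduced to differences of the weight in $\alpha$ and to $E_\alpha(\Delta^{2d}g)$. Because $\iint\frac{|\delta_\alpha\Delta^{2d}g(x)|^2}{|\alpha|}d\eta(\alpha)dx\le c_d\|\Delta^{2d}g\|_{\dot H^{1/2}}^2$ uniformly in $\mu_2$ (dropping $\tilde\chi_{\mu_2}\le1$), the $g_1$-contributions give $\lesssim\|\nabla g_1\|_{L^\infty}\|\Delta^{2d}g\|_{\dot H^{1/2}}^2$, while the $g_2$-contributions, after a Gagliardo–Nirenberg interpolation $\|\Delta^{2d}g\|_{L^2}\lesssim_{r_0}\|\Delta^{2d}g\|_{\dot H^{1/2}}^{(7d-2)/(7d-1)}$ followed by Young's inequality, land in $\varepsilon\|\Delta^{2d}g\|_{\dot H^{1/2}}^2+\mathcal F(\|g_2\|_{H^{4d+1/2}}+\varepsilon^{-1})$.

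Then comes the remainder $\mathcal R$ paired with $\Delta^{2d}g$. The range $|\alpha|\ge1$ is harmless (integrable kernel, crude bounds, then the same GN/Young step); the core is $|\alpha|\le1$. There one splits each factor via $g=g_1+g_2$ and uses: the bound $\|E_\alpha w\|_{L^2}\lesssim|\alpha|^{1/2}\|w\|_{\dot H^{3/2}}$, so that $E_\alpha(\partial^\beta g)$ contributes $|\alpha|^{1/2}\|\Delta^{2d}g\|_{\dot H^{1/2}}$ when $|\beta|=4d-1$ and lower-order norms otherwise; the estimate of each factor $\Delta_\alpha(\partial^{\gamma_i}g_2)$ by $\|g_2\|_{H^{4d+1/2}}$, with an extra $|\alpha|$-gain when $|\gamma_i|$ is small; and, for a factor $\Delta_\alpha(\partial^{\gamma_i}g_1)=\partial^{\gamma_i}\Delta_\alpha g_1$, integration by parts in $x$ to move the derivative onto the other factors so that only $\Delta_\alpha g_1$ (bounded by $\|\nabla g_1\|_{L^\infty}$, with no $|\alpha|^{-1}$ singularity) is left. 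Arranging that at most one factor carries the top derivative count, each term becomes a product of $\|\nabla g\|_{L^\infty}\le r_0$, $\|g_2\|_{H^{4d+1/2}}$, at most two copies of $\|\Delta^{2d}g\|_{\dot H^{1/2}}$ (or lower $\|\Delta^{2d}g\|_{L^2}$), and — where the smallness cannot be spared except on $g_1$ — a power of $\|\nabla g_1\|_{L^\infty}$; interpolating $g_1$ between level $1$ (its Lipschitz bound) and the top level $4d+\tfrac12$ down to the intermediate level $4d$ forced by the order of $\Delta^{2d}$ produces exactly the exponent $1-\tfrac{4d-1}{4d-\frac12}=\tfrac1{8d-1}$, and a final Young step with parameter $\varepsilon$ isolates the quadratic $\|\Delta^{2d}g\|_{\dot H^{1/2}}^2$ from the lower-order remainder, which goes into $\mathcal F(\|g_2\|_{H^{4d+1/2}}+\varepsilon^{-1})$.

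The hardest part will be the bookkeeping in the last step, under two competing constraints. First, all constants must stay uniform in the regularisation parameter $\mu_2$, and here there is no viscosity to absorb losses, so one cannot afford crude pointwise bounds — these would create $\mu_2$-dependent factors such as $\int_{|\alpha|\ge\mu_2}|\alpha|^{-d-1/2}d\alpha\sim\mu_2^{-1/2}$ — and must instead systematically exploit the first-order vanishing of $E_\alpha$ at $\alpha=0$, the $\alpha\mapsto-\alpha$ symmetry, and carefully placed integrations by parts. Second, $g_1$ carries no information beyond $\|\nabla g_1\|_{L^\infty}$, so each of its derivatives above the first has to be traded back to $g$ or $g_2$ (by integration by parts, or by writing $\partial^\gamma g_1=\partial^\gamma g-\partial^\gamma g_2$); it is precisely this loss that forces the fractional power $\tfrac1{8d-1}$ and keeps the top-order $\dot H^{1/2}$-norm confined to a single quadratic term with coefficient $O(\varepsilon)+O(\|\nabla g_1\|_{L^\infty}^{1/(8d-1)})$.
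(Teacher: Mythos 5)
Your overall plan — expand $\Delta^{2d}N(g,g)$ by Leibniz into a principal part plus a commutator remainder, symmetrize the principal part to extract the coercive seminorm, then close the remainders by Triebel--Lizorkin/Gagliardo--Nirenberg interpolation and Young — is the same route the paper takes (see \eqref{impdeconm}, \eqref{coro}, and Lemmas~\ref{le1}--\ref{le3}). The derivation of the exponent $\tfrac1{8d-1}$ by interpolating $g_1$ between level $1$ (Lipschitz) and level $4d+\tfrac12$ down to level $4d$ is exactly the mechanism in \eqref{GNineqs}. However, there are two concrete gaps.

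First, your principal term carries the weight $\langle\Delta_\alpha g\rangle^{-(d+1)}$, so after the $\alpha\mapsto-\alpha$, $x\mapsto x-\alpha$ symmetrization the coercive quantity you produce is $-\tfrac12\iint|\delta_\alpha\Delta^{2d}g|^2\langle\Delta_\alpha g\rangle^{-(d+1)}\tfrac{d\eta}{|\alpha|}dx$, and you then need to replace this weight by $\langle\hat\alpha\cdot\nabla g\rangle^{-(d+1)}$. Your proposed cost bound
\[
\iint\frac{|\delta_\alpha\Delta^{2d}g(x)|^2}{|\alpha|}\,|V(\alpha,x)|\,d\eta(\alpha)\,dx,\qquad
|V|\lesssim|E_\alpha g|\le 2\|\nabla g_1\|_{L^\infty}+\min(|\alpha|,1)\|g_2\|_{H^{4d+\frac12}},
\]
does not close for the $g_2$-part: the quantity $\|g_2\|_{H^{4d+\frac12}}\iint|\delta_\alpha\Delta^{2d}g|^2\min(|\alpha|,1)\tfrac{d\eta}{|\alpha|}dx$ is of the order $\|g_2\|_{H^{4d+\frac12}}\|\Delta^{2d}g\|_{\dot H^{\frac12}}^2$ (or a subquadratic expression involving $\|\Delta^{2d}g\|_{L^2}$, which is not controlled by the right-hand side of the proposition either). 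You cannot split the positive factor $|\delta_\alpha\Delta^{2d}g|^2$ into $g_1$- and $g_2$-contributions, and Young cannot convert a coefficient $\|g_2\|_{H^{4d+\frac12}}$ in front of $\|\Delta^{2d}g\|_{\dot H^{\frac12}}^2$ into $\varepsilon\|\Delta^{2d}g\|_{\dot H^{\frac12}}^2+\mathcal F(\dots)$. The paper avoids this by putting $\langle\hat\alpha\cdot\nabla g\rangle^{-(d+1)}$ in the principal term from the start and isolating the $V$-terms $I_2,I_4$ \emph{before} forming the square: there the H\"older structure produces a separate $\|\Delta^{2d}g\|_{L^{p_1}}$ factor that \emph{can} be split into $g_1+g_2$, so the $g_2$-contribution ends up with a strictly subquadratic power of $\|\Delta^{2d}g\|_{\dot H^{\frac12}}$ and Young applies.

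Second, the interpolation $\|\Delta^{2d}g\|_{L^2}\lesssim_{r_0}\|\Delta^{2d}g\|_{\dot H^{1/2}}^{(7d-2)/(7d-1)}$ that you invoke is not a valid Gagliardo--Nirenberg inequality. The norm $\|\Delta^{2d}g\|_{L^2}=\|g\|_{\dot H^{4d}}$ cannot be bounded by a power of $\|g\|_{\dot H^{4d+\frac12}}$ and $\|\nabla g\|_{L^\infty}$ alone with output exponent $p=2$; the inequality \eqref{GNinterpolation} with base $\|\nabla g\|_{L^\infty}$ produces an $L^{p_1}$ bound with $p_1=\tfrac{8d-1}{4d-1}>2$, which is what the paper uses in $\|\Delta^{2d}g\|_{L^{p_1}}$. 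The exponent $(7d-2)/(7d-1)$ appears to be imported from the time-weight exponent $m=2(7d-2)$ in Proposition~\ref{propyiran}, which is unrelated to the interpolation used inside the proof of Proposition~\ref{thm1}. Your treatment of the remainder $\mathcal R$ via integration by parts to strip derivatives off $g_1$ is plausible but would need the same kind of splitting discipline; the paper instead keeps the commutator $M(\alpha,x)$ in Leibniz form and interpolates each factor in Triebel--Lizorkin spaces, which keeps the bookkeeping linear.
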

We postpone the proof in the Appendix.

\section{Complete the Proof of Theorem \ref{mainthm1} }\label{sectcomplete}
\begin{proof} Assume $||\nabla f_{0,1}||_{L^\infty}\leq \frac{1}{800 d^2(C_1+1)}$, where $C_1$ is the constant in the inequality \eqref{Ke}. Let $T_1\leq T^\star$ be such that  
	\begin{equation}
		\label{xu1}\sup_{\tau\in [0,T_1]}||\nabla F_1(\tau)||_{L^\infty}\leq 10d||\nabla f_{0,1}||_{L^\infty}.
	\end{equation} By Proposition \ref{pro3}, there exists $t_1$ independent of $T_1$ such that
	\begin{align}\label{resultforf}
		\sup_{\tau\in [0,\min\{T_1,t_1\}]}	||\nabla f(\tau)||_{L^\infty}\leq 1+2||\nabla f_0||_{L^\infty}.
	\end{align}
	By the Lipschitz estimate \eqref{Ke}, we have for any $0\leq t\leq T_1$
	\begin{align*}
		\frac{dM_j}{dt}+\frac{1}{2}B_j&\leq  C_0R^3 (1+A)+C_1AB_j+C R A \log(2+\|\nabla F_1\|_{\dot C^\frac{1}{2}})\\ &\leq  2C_0R^3 +\frac{1}{10}B_j+C R \log(2+\|\nabla F_1\|_{\dot C^\frac{1}{2}}).
	\end{align*}
	Here we have used the definition of $A$ in \eqref{defA} to get 
	\begin{align*}
		A(t)\leq 2d ||\nabla F_1(t)||_{L^\infty}\leq 20d^2||\nabla f_{0,1}||_{L^\infty}\leq \frac{1}{40 (C_1+1)}.
	\end{align*}
	So, for any $0\leq t\leq T_1$,
	\begin{align*}
		\frac{dM_j}{dt}\leq   2C_0R^3+C R \log(2+\|\nabla F_1\|_{\dot C^{\frac{1}{2}}}).
	\end{align*}
	Note that the same arguments are valid for $m_j(t)$. Recalling the definition \eqref{defA}
	and summing in $j$ we have 
	\begin{align*}
		\frac{dA}{dt}\leq   4dC_0R^3+C R \log(2+\|\nabla F_1\|_{\dot C^\frac{1}{2}}).
	\end{align*}
	This implies that for any $0\leq t\leq T_1$,
	\begin{align}\nonumber
		\sup_{\tau\in[0,t]} ||\nabla F_1(\tau)||_{L^\infty}&\leq \sup_{\tau\in[0,t]} A(\tau)\\&\nonumber\leq A(0)+4dC_0R^3t+C R \int_{0}^{t}\log(2+\|\nabla F_1(\tau)\|_{\dot C^\frac{1}{2}})d\tau\\& \leq  2d||\nabla f_{0,1}||_{L^\infty}+4dC_0R^3t+C R \int_{0}^{t}\log(2+\|\nabla F_1(\tau)\|_{\dot C^\frac{1}{2}})d\tau \label{xu2}.
	\end{align} 
	Let $r_0=1+2||\nabla f_0||_{L^\infty}$ and $\sigma_1$ be in Proposition \ref{propyiran}  associated to $r_0$. Assume 
	$||\nabla f_{0,1}||_{L^\infty}\leq \frac{\sigma_1}{800 d(C_1+1)}$. Then by \eqref{xu1}
	$$	\sup_{\tau\in [0,T_1]}||\nabla F_1(\tau)||_{L^\infty}\leq 10d||\nabla f_{0,1}||_{L^\infty}\leq \sigma_1.$$
	Now we can apply  Proposition \ref{propyiran} to $T=t\leq \min\{T_1,t_1\}$ and obtain 
	\begin{equation*}
		\int _0^{t} \log(2+\|\nabla F_1(\tau)\|_{\dot C^\frac{1}{2}}) d\tau \leq C(r_0,R) t^{\frac{1}{2}}.
	\end{equation*}
	Combining this with \eqref{xu2} yields 
	\begin{align}\nonumber
		\sup_{\tau\in[0,t]} ||\nabla F_1(\tau)||_{L^\infty}\leq  2d||\nabla f_{0,1}||_{L^\infty}+4dC_0R^3t+C(r_0,R) t^{\frac{1}{2}}.
	\end{align} 
	Set $$t_2= \frac{||\nabla f_{0,1}||^2_{L^\infty}}{4\left(C(r_0,R)+R^2+10d(C_0+1)\right)^2},$$
	then we have
	$$
	4dC_0R^3t_2+C(r_0,R) t_2^{\frac{1}{2}}\leq ||\nabla f_{0,1}||_{L^\infty}.
	$$
	Thus, 
	\begin{equation}\label{xu4}
		\sup_{\tau\in[0,\min\{T_1,t_1,t_2\}]} ||\nabla F_1(\tau)||_{L^\infty}\leq  (2d+1)||\nabla f_{0,1}||_{L^\infty}.
	\end{equation}
	Now we will prove that 
	\begin{equation*}
		\sup_{\tau\in[0,\min\{T^\star,t_1,t_2\}]} ||\nabla F_1(\tau)||_{L^\infty}\leq  10d||\nabla f_{0,1}||_{L^\infty}.
	\end{equation*}
	In fact, set 
	\begin{align*}
		\tau_0=\sup\left\{t\in [0,\min\{T^\star,t_1,t_2\}] :\sup_{\tau\in [0,t]}|\nabla F_1(\tau)||_{L^\infty}\leq  10d||\nabla f_{0,1}||_{L^\infty}\right\}.
	\end{align*}
	If $\tau_0<\min\{T^\star,t_1,t_2\}$, using \eqref{xu1} and\eqref{xu4} with $T_1=\tau_0$, we have
	\begin{align*}	\sup_{\tau\in[0,\tau_0]} ||\nabla F_1(\tau)||_{L^\infty}\leq  (2d+1)||\nabla f_{0,1}||_{L^\infty}<10d||\nabla f_{0,1}||_{L^\infty},
	\end{align*}
	which  contradicts the definition that $\tau_0$ is a  
	supremum. Hence $\tau_0=\min\{T^\star,t_1,t_2\}$. Then \eqref{resultforf} implies
	\begin{align*}
		\sup_{t\in[0,\tau_0]}\|\nabla f\|_{L^\infty}\leq 1+2\|\nabla f_0\|_{L^\infty}.
	\end{align*}
	Combining this with  \eqref{eqinthm} and the standard compactness argument, we are able to pass the limit $\mu_2\rightarrow0$ and then $\mu_1\rightarrow 0$ to get a solution of the Cauchy problem \eqref{E1}, which also satisfy the above estimates. Thus we complete the proof of Theorem \ref{mainthm1} with $\sigma=\frac{\sigma_1}{800 d^2(C_1+1)}$.
\end{proof}	
\section{Uniqueness}\label{sectionunique}
In this section, we give a proof of Proposition \ref{propunique}.\\
\begin{proof}
	Set $g=f-\bar{f}$, then we have
	$$
	\partial_t g=\int\frac{E_\alpha g}{\left\langle \Delta_\alpha f\right\rangle^{d+1}}\frac{d\alpha}{|\alpha|^d}+\int E_\alpha \bar{f}\left(\frac{1}{\left\langle \Delta_\alpha f\right\rangle^{d+1}}-\frac{1}{\left\langle \Delta_\alpha \bar{f}\right\rangle^{d+1}}\right)\frac{d\alpha}{|\alpha|^{d}}.
	$$
	From Lemma \ref{Cm}, we have
	$$
	\partial_t g\leq \int\frac{E_\alpha g}{\left\langle \Delta_\alpha f\right\rangle^{d}}\frac{d\alpha}{|\alpha|^{d+1}}+C\int\left|E_\alpha \bar{f}\right||\Delta_\alpha g|\frac{d\alpha}{|\alpha|^{d}}.
	$$
	Let $x_t$ satisfy $\mathbf{g}(t):=g(t, x_t)=\sup_{x}g(t,x)$. Then we have $\nabla g(x_t)=0$ and $\delta_\alpha g(x_t)\geq0$. Hence we have for $x=x_t$
	$$
	\frac{d \mathbf{g}}{dt}+\tilde C\int|\delta_\alpha g|\frac{d\alpha}{|\alpha|^{d+1}}\leq C\int\left|E_\alpha \bar{f}\delta_\alpha g\right|\frac{d\alpha}{|\alpha|^{d+1}}
	$$
	where 
	$
	\tilde C=1/(\langle ||\nabla f||_{L^\infty_{t,x}}\rangle^{d+1}).
	$\vspace{0.1cm}\\
	Recall that $\bar f$ can be decomposed that
	$
	\bar{f}=	\bar{f}_1+	\bar{f}_2
	$
	with 
	$
	||\bar{f}_1||_{\dot W^{1,\infty}}\leq \sigma $ and $\bar f_2\in L^\infty([0,T], H^{10d}).
	$\\
	Then for $\sigma\leq\frac{\tilde C}{8C}$, we have for $x=x_t$
	\begin{align*}
		\frac{d \mathbf{g}}{dt}+\frac{\tilde C}{2}\int\frac{\delta_\alpha g}{|\alpha|^{d+1}}d\alpha&\lesssim \epsilon_0\int_{|\alpha|\leq \epsilon_0}|\delta_\alpha g|\frac{d\alpha}{|\alpha|^{d+1}}+||g||_{L^\infty}\int_{|\alpha|\geq \epsilon_0} \left|E_\alpha \bar{f}_2\right|\frac{d\alpha}{|\alpha|^{d+1}}
		\\&\lesssim \epsilon_0\int_{|\alpha|\leq \epsilon_0}|\delta_\alpha g|\frac{d\alpha}{|\alpha|^{d+1}}+\epsilon_0^{-1}||g||_{L^\infty}||\nabla\bar{f}_2||_{L^\infty_{t,x}}.
	\end{align*}
	Choosing $\epsilon_0>0$ small enough, the first term on the right hand side can be absorbed by the left hand side. Then one has
	$$
	\frac{d \mathbf{g}}{dt}\leq C(||\nabla\bar{f}_2||_{L^\infty_{t,x}},||\nabla f||_{L^\infty_{t,x}}) ||g||_{L^\infty}.
	$$
	Replace $g$ by $-g$, a similar discussion shows that the estimate holds for $\inf_xg(t,x)$. Thus we can conclude that
	$$
	\frac{d}{dt}||g(t)||_{L^\infty}\leq C(||\nabla\bar{f}_2||_{L^\infty_{t,x}},||\nabla f||_{L^\infty_{t,x}}) ||g(t)||_{L^\infty}.
	$$
	Finally, combining this with Gronwall's inequality we get Proposition \ref{propunique}.
\end{proof}
\section{Appendix}
In this section, we will prove Proposition \ref{thm1}.
We first review some elementary results about Triebel-Linzorkin spaces following Triebel \cite{triebel}.
\begin{definition} (Triebel-Lizorkin norms)\\
	For any integer $m\geq 0$, and real number $s\in(m,m+1)$ and $p,q\in[1,\infty)$, the homogeneous Triebel-Lizorkin space $\dot F^s_{p,q}(\mathbb{R}^d)$ consists of those tempered distributions $f$ whose Fourier transform is integrable on a neighborhood of the origin and such that
	\[
	\|f\|_{\dot F^s_{p,q}(\mathbb{R}^d)}=\left(\int_{\mathbb{R}^d}\left(\int_{\mathbb{R}^d}|\delta_\alpha D^mf(x)|^q\frac{d\alpha}{|\alpha|^{d+q(s-m)}}\right)^\frac{p}{q}dx\right)^\frac{1}{p}< +\infty.
	\]
	We also define
	\[
	\|f\|_{\dot F^s_{p,\infty}(\mathbb{R}^d)}=\left(\int_{\mathbb{R}^d}\left(\sup_{\alpha}\frac{|\delta_\alpha D^mf(x)|}{|\alpha|^{s-m}}\right)^pdx\right)^\frac{1}{p}.
	\]
	We note that $\|\cdot\|_{\dot H^s}$ and $\|\cdot\|_{\dot F^s_{2,2}}$ are equivalent. Moreover, for any $2<p<\infty$ and  $0<q\leq\infty$, we have
	\[
	\|f\|_{\dot F^s_{p,q}(\mathbb{R}^d)}\lesssim\|f\|_{\dot H^r (\mathbb{R}^d) }\quad\quad\text{for} ~~r=s-\frac{d}{p}+\frac{d}{2}.
	\]
\end{definition}
More generally, we introduce the following Gagliardo-Nirenberg interpolation inequality for the Triebel-Lizokin spaces:\\
Let $1<q\leq\infty$ and  $s>0$. There holds
\begin{align}\label{GNinterpolation}
	\|f\|_{\dot F^{\theta s}_{\frac{2}{\theta},q}}\lesssim\|f\|_{\dot H^{s}}^\theta\|f\|_{L^\infty}^{1-\theta}
\end{align}
for any $\theta\in(0,1)$.
\vspace{0.5cm}\\
To prove Proposition \ref{thm1}, we first observe that
\begin{equation}\label{coro}
	\begin{aligned}
		&\left|\iint\Delta^{2d}(N(g,g))\Delta^{2d} g(x)d\eta(\alpha)dx+\frac{1}{2}\iint \frac{|\delta_\alpha \Delta^{2d} g(x)|^2}{\left\langle \hat \alpha \cdot \nabla g(x) \right\rangle^{d+1}} \frac{d\eta(\alpha) }{|\alpha|}dx\right|\\
		&\quad\lesssim\left|\iint\frac{E_\alpha (\Delta^{2d} g)(x)}{\langle \hat \alpha\cdot \nabla g(x) \rangle^{d+1}}d\eta(\alpha)\Delta^{2d} g(x)dx+\frac{1}{2}\iint \frac{|\delta_\alpha \Delta^{2d} g(x)|^2}{\left\langle \hat \alpha \cdot \nabla g(x) \right\rangle^{d+1}} \frac{d\eta(\alpha) }{|\alpha|}dx\right|\\
		&\quad\quad\quad\quad+ \left|\iint\hat\alpha\cdot\nabla_x V (\alpha,x)d\eta(\alpha)| \Delta^{2d} g(x)|^2 dx\right|+\left|\iint M(\alpha,x)d\eta(\alpha)\Delta^{2d} g(x) dx\right|\\
		&\quad\quad\quad\quad+\left|\iint\Delta_\alpha (\Delta^{2d} g)(x)V(\alpha,x)d\eta(\alpha)\Delta^{2d} g(x) dx\right|\\
		&\quad=:I_1+I_2+I_3+I_4,
	\end{aligned}
\end{equation}
where 
\begin{equation}\label{defM}
	M(\alpha,x)=\Delta^{2d}\left(\frac{E_\alpha g(x)}{\left\langle \Delta_\alpha g(x)\right\rangle^{d+1}}\right)-\frac{E_\alpha(\Delta^{2d} g)(x)}{\left\langle \Delta_\alpha g(x)\right\rangle^{d+1}},
\end{equation}
and 
\begin{equation}\label{defV}
	V(\alpha,x)=\frac{1}{\left\langle \Delta_\alpha g(x)\right\rangle^{d+1}}-\frac{1}{\left\langle \hat \alpha\cdot\nabla g(x) \right\rangle^{d+1}}.
\end{equation}
In fact, recall the definition of $N$ in \eqref{defN},  direct calculation leads to 
\begin{align}
	\Delta^{2d}(N(g,g))
	=&\int \frac{E_\alpha (\Delta^{2d} g)(x)}{\langle \hat \alpha\cdot \nabla g(x) \rangle^{d+1}}d\eta(\alpha)+\int E_\alpha (\Delta^{2d} g)(x)V(\alpha,x)d\eta(\alpha)\nonumber\\
	&\quad\quad+\int M(\alpha,x)d\eta(\alpha).\label{impdeconm}
\end{align}
Note that 
\begin{align*}
	&\int E_\alpha (\Delta^{2d} g)(x)V(\alpha,x)d\eta(\alpha)\\
	&\quad\quad\quad=\int\hat\alpha\cdot\nabla \Delta^{2d} g(x)V(\alpha,x)d\eta(\alpha)
-\int\Delta_\alpha (\Delta^{2d} g)(x)V(\alpha,x)d\eta(\alpha).
\end{align*}
Take the $L^2$ inner product of \eqref{impdeconm} with $\Delta^{2d}g$ and integrate by parts, one has
\begin{align*}
	&\iint\Delta^{2d}(N(g,g))\Delta^{2d} gdx\\
	&=\iint\frac{E_\alpha (\Delta^{2d} g)(x)}{\langle \hat \alpha\cdot \nabla g(x) \rangle^{d+1}}d\eta(\alpha)\Delta^{2d} g(x)dx-\frac{1}{2} \iint\hat\alpha\cdot\nabla_x V (\alpha,x)d\eta(\alpha) | \Delta^{2d} g(x)|^2 dx\\
	&\quad-\iint\Delta_\alpha (\Delta^{2d} g)(x)V(\alpha,x)d\eta(\alpha)\Delta^{2d} g(x)dx +\iint M(\alpha,x)d\eta(\alpha)\Delta^{2d} g(x) dx,
\end{align*}
which leads to \eqref{coro}.
\vspace{0.5cm}\\
To simplify the notations, we set 
$$
p_1=\frac{8d-1}{4d-1},\quad\  p_2=\frac{32d-4}{16d-3}, \quad\  p_3=\frac{32d-4}{3},\quad\  	p_4=16d-2, \quad\  p_5=\frac{4(8d-1)}{5}.
$$
We recall that $\mathcal{F}:(1,\infty)\to (1,\infty)$  denote some increasing functions  and $\mathcal{F}(r)\to \infty$ as $r\to \infty$. The definition of $\mathcal{F}$ may be different from line to line. 
To prove Proposition \ref{thm1}, it remains to estimate the four terms in the right hand side of \eqref{coro}. We finish this in Lemma \ref{le1}$-$Lemma \ref{le3}.
\begin{lemma}	\label{le1} (Estimate for $I_1$)\\
	Let $g$, $r_0$ as defined in Proposition \ref{thm1}, there holds
	\begin{align}\nonumber
		&\left|\iint\frac{E_\alpha (\Delta^{2d} g)(x)}{\langle \hat \alpha\cdot \nabla g(x) \rangle^{d+1}}d\eta(\alpha)\Delta^{2d} g(x)dx+\frac{1}{2}\iint \frac{|\delta_\alpha \Delta^{2d} g(x)|^2}{\left\langle \hat \alpha \cdot \nabla g(x) \right\rangle^{d+1}} \frac{d\eta(\alpha) }{|\alpha|}dx\right|\\
		&\qquad\qquad\qquad\qquad\lesssim_{r_0}  (||\nabla g_1||_{L^\infty}^\frac{16d-5}{16d-2}+\varepsilon) ||\Delta^{2d} g||_{\dot H^{\frac{1}{2}}}^2+\mathcal{F}(||g_2||_{ H^{4d+\frac{1}{2}}}+\varepsilon^{-1})\label{z5}
	\end{align}
	for any $\varepsilon\in (0,1)$.
\end{lemma}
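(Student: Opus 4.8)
The plan is to estimate the quantity
\[
Q:=\iint\frac{E_\alpha (\Delta^{2d} g)(x)}{\langle \hat \alpha\cdot \nabla g(x) \rangle^{d+1}}d\eta(\alpha)\,\Delta^{2d} g(x)\,dx+\frac{1}{2}\iint \frac{|\delta_\alpha \Delta^{2d} g(x)|^2}{\left\langle \hat \alpha \cdot \nabla g(x) \right\rangle^{d+1}} \frac{d\eta(\alpha)}{|\alpha|}dx
\]
by exploiting a commutator/symmetrization structure: since $E_\alpha h=\hat\alpha\cdot\nabla h-\Delta_\alpha h$ and $\Delta^{2d}g$ is even in $x\mapsto$ reflection only after pairing $\alpha$ with $-\alpha$, I would first symmetrize the $\alpha$–integral. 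Writing $h=\Delta^{2d}g$ for brevity, the term $\iint \frac{\hat\alpha\cdot\nabla h(x)}{\langle\hat\alpha\cdot\nabla g(x)\rangle^{d+1}}h(x)\,d\eta\,dx$ has a kernel odd in $\alpha$, so one can replace $\hat\alpha\cdot\nabla h(x)\,h(x)$ by $\frac12\hat\alpha\cdot\nabla(h^2)(x)$ and integrate by parts in $x$; this produces a term with $\hat\alpha\cdot\nabla_x\big(\langle\hat\alpha\cdot\nabla g(x)\rangle^{-(d+1)}\big)$, which is bounded because it only costs one derivative landing on $\nabla g$, controlled by $\|g\|_{\dot C^{1+\epsilon}}$-type quantities and ultimately by the smooth part plus a small constant. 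For the $-\Delta_\alpha h$ piece one writes $-\Delta_\alpha h(x)h(x)=\tfrac12|\delta_\alpha h(x)|^2/|\alpha|-\tfrac12\delta_\alpha(h^2)(x)/|\alpha|$ after symmetrizing in $\alpha$; the first term is exactly what cancels against the positive quadratic term $\tfrac12\iint|\delta_\alpha h|^2\langle\hat\alpha\cdot\nabla g\rangle^{-(d+1)}d\eta/|\alpha|$ up to an error where the weight $\langle\hat\alpha\cdot\nabla g(x)\rangle^{-(d+1)}$ must be frozen at $x$ versus $x-\alpha$, and the second term is again integrated by parts in $\alpha$.

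After these manipulations $Q$ is reduced to a sum of remainder integrals, each of the schematic form $\iint W(\alpha,x)\,|h(x)|^2\,d\eta\,dx$ or $\iint \delta_\alpha h(x)\,h(x)\,R(\alpha,x)\,d\eta\,dx/|\alpha|$, where $W$ and $R$ involve one $x$–derivative of the weight $\langle\hat\alpha\cdot\nabla g\rangle^{-(d+1)}$ and hence are controlled pointwise by quantities like $|\alpha|^{-1}$ times local Hölder norms of $\nabla g$. The key quantitative step is then a Littlewood–Paley / Triebel–Lizorkin interpolation: the factor $\|\nabla g_1\|_{L^\infty}$ is what will appear because the dangerous part of the weight derivative, the one with no smallness, is split as $\nabla g=\nabla g_1+\nabla g_2$, the $\nabla g_2$ contribution being absorbed into $\mathcal F(\|g_2\|_{H^{4d+1/2}})$ by standard product estimates, while the $\nabla g_1$ contribution carries the small factor $\|\nabla g_1\|_{L^\infty}$ (or a fractional power of it). To get the precise exponent $\frac{16d-5}{16d-2}$ and the $\dot H^{1/2}$ norm of $\Delta^{2d}g$ on the right, I would run the Gagliardo–Nirenberg inequality \eqref{GNinterpolation} for Triebel–Lizorkin spaces with the exponents $p_1,\dots,p_5$ introduced just before the lemma: bound $\|h\|_{L^{p}}$ and $\|\delta_\alpha h/|\alpha|^{\theta}\|_{L^{q}}$-type quantities by $\|h\|_{\dot H^{1/2}}^{\theta'}\|h\|_{L^\infty}^{1-\theta'}$ with $h=\Delta^{2d}g$ reinterpreted as $\|g\|_{\dot H^{4d+1/2}}^{\theta'}\|g\|_{\dot H^{4d}}^{1-\theta'}\cdots$, and then use Young's inequality with the parameter $\varepsilon$ to split into $\varepsilon\|\Delta^{2d}g\|_{\dot H^{1/2}}^2 + \mathcal F(\varepsilon^{-1})$, with the remaining low-frequency/smooth data all absorbed into $\mathcal F(\|g_2\|_{H^{4d+1/2}}+\varepsilon^{-1})$.

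I expect the main obstacle to be the bookkeeping in the interpolation step — namely choosing the Hölder conjugate exponents (this is presumably why $p_1,\dots,p_5$ are fixed in advance) so that every remainder term, after distributing derivatives among the $2d$ Laplacians and the weight, lands exactly in a product $L^{p}\times L^{p'}$ that interpolates into $\|\Delta^{2d}g\|_{\dot H^{1/2}}^{2}$ with a strictly positive deficit that can carry either $\|\nabla g_1\|_{L^\infty}^{\gamma}$ or $\varepsilon$. In particular one must be careful that the worst term, where all $2d$ derivatives hit $g$ inside the weight and none can be integrated by parts, still closes; this is exactly the term whose coefficient is a power of $\|\nabla g_1\|_{L^\infty}$, and verifying that the exponent $\frac{16d-5}{16d-2}<1$ is positive and that the companion power of $\|\Delta^{2d}g\|_{\dot H^{1/2}}$ does not exceed $2$ is the crux. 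The symmetrization-in-$\alpha$ and integration-by-parts-in-$x$ are routine once set up, and the contributions involving $g_2$ are harmless because we may afford the non-sharp embedding $\dot F^s_{p,q}\hookrightarrow \dot H^r$ for $p>2$ and pay any finite Sobolev norm of $g_2$ into $\mathcal F$.
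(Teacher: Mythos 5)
Your outline matches the paper's proof of this lemma step for step: symmetrize in $\alpha$, cancel the quadratic piece against the good term modulo the commutator obtained from freezing the weight $\langle\hat\alpha\cdot\nabla g\rangle^{-(d+1)}$ at $x$ versus $x-\alpha$, apply Lemma~\ref{Cm} to bound that commutator by $|\delta_\alpha\nabla g|$, split $\nabla g=\nabla g_1+\nabla g_2$, and close via H\"older in Triebel--Lizorkin spaces with the exponents $p_1,\dots,p_4$, the Gagliardo--Nirenberg inequalities \eqref{GNineqs}, and Young's inequality. The bookkeeping you defer is precisely what produces the exponent $\frac{16d-5}{16d-2}$, and it is carried out on the two remainder integrals $\iint|\delta_\alpha h||h||\delta_\alpha\nabla g|\,|\alpha|^{-d-1}$ and $\iint|\delta_\alpha h|^2|\delta_\alpha\nabla g|\,|\alpha|^{-d-1}$ with $h=\Delta^{2d}g$.

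Two imprecisions should be corrected, though neither is fatal here. First, your treatment of the term $\iint\frac{\hat\alpha\cdot\nabla\Delta^{2d}g(x)}{\langle\hat\alpha\cdot\nabla g(x)\rangle^{d+1}}\Delta^{2d}g(x)\,d\eta(\alpha)\,dx$: you propose integrating by parts in $x$ and claim the resulting term involving $\hat\alpha\cdot\nabla_x\langle\hat\alpha\cdot\nabla g\rangle^{-(d+1)}$ is controlled by ``$\dot C^{1+\epsilon}$-type'' quantities. That justification is not available under the hypotheses (only $\nabla g\in L^\infty$ is assumed, and this term would cost a full second derivative of $g$), and it is also unnecessary: for each fixed $x$ the $\alpha$-integrand is odd in $\alpha$, since $\hat\alpha$ is odd while $\langle\hat\alpha\cdot\nabla g(x)\rangle$, $\tilde\chi_{\mu_2}(\alpha)$ and $|\alpha|^{-d}$ are even, so the principal value in $\alpha$ vanishes identically. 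The paper simply drops this term. Your route would also eventually land on zero --- the integrand remains odd after integration by parts --- but the intermediate $\dot C^{1+\epsilon}$ rationale is incorrect and should not be invoked. Second, your worry about ``the worst term where all $2d$ derivatives hit $g$ inside the weight and none can be integrated by parts'' is misplaced for $I_1$: here the weight is just $\langle\hat\alpha\cdot\nabla g(x)\rangle^{-(d+1)}$ with no Leibniz expansion of $\Delta^{2d}$, and the only object you need to control is $\delta_\alpha\nabla g$. The phenomenon you are describing belongs to the commutator term $M(\alpha,x)$ estimated in Lemma~\ref{le4}, not to $I_1$.
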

\begin{proof} Set  $h=\Delta^{2d} g$. 	Note  that $\int\frac{\hat\alpha\cdot\nabla h(x)}{\left\langle \hat \alpha \cdot \nabla g(x) \right\rangle^{d+1}}d\eta(\alpha) =0$, hence
	\begin{align*}
		\iint\frac{E_\alpha h(x)}{\langle \hat \alpha\cdot \nabla g(x) \rangle^{d+1}}d\eta(\alpha)h(x)dx&=-\frac{1}{2}\iint \Delta_\alpha h(x)\delta_\alpha\left(\frac{ h(\cdot)}{\left\langle \hat \alpha \cdot \nabla g(\cdot) \right\rangle^{d+1}}\right)(x)d\eta(\alpha) dx.
	\end{align*}
	Then one has
	\begin{align*}
		|I_1|&=\left|	\iint\frac{E_\alpha h(x)}{\langle \hat \alpha\cdot \nabla g(x) \rangle^{d+1}}d\eta(\alpha)h(x)dx+\frac{1}{2}\iint \frac{|\delta_\alpha h(x)|^2}{\left\langle \hat \alpha \cdot \nabla g(x) \right\rangle^{d+1}} \frac{d\eta(\alpha) }{|\alpha|}dx\right|\\
		&\lesssim\left|\iint \delta_\alpha h(x)h(x-\alpha)\delta_\alpha\left(\frac{ 1}{\left\langle \hat \alpha \cdot \nabla g(\cdot) \right\rangle^{d+1}}\right)(x) \frac{d\eta(\alpha) }{|\alpha|}dx\right|.
	\end{align*}
	By Lemma \ref{Cm},
	we obtain
	\begin{align*}
		|I_1|&\lesssim\iint |\delta_\alpha h(x)||h(x-\alpha)||\delta_\alpha\nabla g(x)| \frac{d\alpha dx}{|\alpha|^{d+1}}\\
		&\lesssim\iint |\delta_\alpha h(x)||\delta_\alpha\nabla g(x)||h(x)| \frac{d\alpha dx}{|\alpha|^{d+1}}+\iint |\delta_\alpha h(x)|^2|\delta_\alpha\nabla g(x)| \frac{d\alpha dx}{|\alpha|^{d+1}}\\
		&:=I_{1,1}+I_{1,2}.
	\end{align*}
	For the first term $I_{1,1}$, we apply the H\"{o}lder's inequality to get
	$$
	I_{1,1}
	\leq ||\Delta^{2d}g||_{L^{p_1}} ||\Delta^{2d}g||_{\dot F^{\frac{1}{4}}_{p_2,2}} \left(||\nabla g_1||_{\dot F^{\frac{3}{4}}_{p_3,2}}+||\nabla g_2||_{\dot F^{\frac{3}{4}}_{p_3,2}}\right).
	$$
	We apply H\"{o}lder's inequality again to $I_{1,2}$, then
	\begin{align*}
		I_{1,2}&\leq\iint |\delta_\alpha h(x)|^2|\delta_\alpha\nabla g_1(x)| \frac{d\alpha dx}{|\alpha|^{d+1}}+\iint |\delta_\alpha h(x)|^2|\delta_\alpha\nabla g_2(x)| \frac{d\alpha dx}{|\alpha|^{d+1}}\\
		&\lesssim||\nabla g_1||_{L^\infty}||\Delta^{2d} g||_{\dot H^{\frac{1}{2}}}^2+||\Delta^{2d} g||^2_{\dot F ^{\frac{1}{4}}_{p_2,4}}||\nabla g_2||_{\dot F^{\frac{1}{2}}_{p_4,2}}.
	\end{align*}
	By the interpolation inequality \eqref{GNinterpolation}, one obtains for any $1< q\leq+\infty$
	\begin{equation}\label{GNineqs}
		\begin{aligned}
			||\Delta^{2d}g||_{L^{p_1}} &\lesssim ||\nabla g||_{L^\infty}^{\frac{1}{8d-1}} || g||_{\dot H^{4d+\frac{1}{2}} }^{\frac{8d-2}{8d-1}},\quad\quad
			||\Delta^{2d}g||_{\dot F^{\frac{1}{4}}_{p_2,q}}
			\lesssim ||\nabla g||_{L^\infty}^{\frac{1}{16d-2}} || g||_{\dot H^{4d+\frac{1}{2}} }^{\frac{16d-3}{16d-2}},\\
			||\nabla g||_{\dot F^{\frac{3}{4}}_{p_3,q}}&\lesssim ||\nabla g||_{L^\infty}^{\frac{16d-5}{16d-2}} || g||_{\dot H^{4d+\frac{1}{2}} }^{\frac{3}{16d-2}},\quad\quad
			\|\nabla g\|_{\dot F^{\frac{1}{2}}_{p_4,q}}\lesssim\|\nabla g\|_{L^\infty}^\frac{8d-2}{8d-1}\|\Delta^{2d}g\|_{\dot H^\frac{1}{2}}^\frac{1}{8d-1}.
		\end{aligned}
	\end{equation}
	Note that $\|\nabla g\|_{L^\infty}\leq r_0$, thus we have
	\begin{align*}
		I_{1,1}&\lesssim
		||\nabla g||_{L^\infty}^{\frac{3}{16d-2}}\|g\|_{\dot H^{4d+\frac{1}{2}}}^\frac{32d-7}{16d-2}(\|\nabla g_1\|_{L^\infty}^\frac{16d-5}{16d-2}\|g_1\|_{\dot H^{4d+\frac{1}{2}}}^\frac{3}{16d-2}+\|\nabla g_2\|_{L^\infty}^\frac{16d-5}{16d-2}\|g_2\|_{\dot H^{4d+\frac{1}{2}}}^\frac{3}{16d-2})\\
		&\lesssim_{r_0}\|g\|_{\dot H^{4d+\frac{1}{2}}}^\frac{32d-7}{16d-2}\left(\|\nabla g_1\|_{L^\infty}^\frac{16d-5}{16d-2}(\|g\|_{\dot H^{4d+\frac{1}{2}}}^\frac{3}{16d-2}+\|g_2\|_{\dot H^{4d+\frac{1}{2}}}^\frac{3}{16d-2})+\|\nabla g_2\|_{L^\infty}^\frac{16d-5}{16d-2}\|g_2\|_{\dot H^{4d+\frac{1}{2}}}^\frac{3}{16d-2}\right)\\
		&\lesssim_{r_0}\|g\|_{\dot H^{4d+\frac{1}{2}}}^2\|\nabla g_1\|_{L^\infty}^\frac{16d-5}{16d-2}+\|g\|_{\dot H^{4d+\frac{1}{2}}}^\frac{32d-7}{16d-2}\|g_2\|_{\dot H^{4d+\frac{1}{2}}}^\frac{3}{16d-2},
	\end{align*}
	and 
	\begin{align*}
		I_{1,2}
		&\lesssim||\nabla g_1||_{L^\infty}||\Delta^{2d} g||_{\dot H^{\frac{1}{2}}}^2+ || g||_{\dot H^{4d+\frac{1}{2}} }^{\frac{16d-3}{8d-1}}\|\Delta^{2d}g_2\|_{\dot H^\frac{1}{2}}^\frac{1}{8d-1}.
	\end{align*}
	Combining the above results with Young's inequality we obtain
	\begin{align*}
		I_{1}
		\leq&\left(\|\nabla g_1\|_{L^\infty}^\frac{16d-5}{16d-2}+\varepsilon\right)\|g\|_{\dot H^{4d+\frac{1}{2}}}^2+\mathcal{F}(||g_2||_{ H^{4d+\frac{1}{2}}}+\varepsilon^{-1})
	\end{align*}
	for any $\varepsilon\in (0,1)$.
\end{proof}
\vspace{0.5cm}
\begin{lemma} \label{le2}(Estimate for $I_2$)\\
	Let $g$, $r_0$ as defined in Proposition \ref{thm1}, and $V$ as defined in \eqref{defV}, there holds
	\begin{equation*}
		\begin{aligned}
		&\left|\iint_{\mathbb{R}^d}\hat\alpha\cdot\nabla V (\alpha,x)   | \Delta^{2d} g(x)|^2  d\eta(\alpha)dx\right|\\
		&\quad\quad\quad\lesssim_{r_0}  (||\nabla g_1||_{L^\infty}^\frac{2}{8d-1}+\varepsilon) ||\Delta^{2d} g||_{\dot H^{\frac{1}{2}}}^2+\mathcal{F}(||g_2||_{ H^{4d+\frac{1}{2}}}+\varepsilon^{-1})
	\end{aligned}	
	\end{equation*}
	for any $\varepsilon\in (0,1)$.
\end{lemma}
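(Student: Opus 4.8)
Write $h=\Delta^{2d}g$ and $\phi(z)=\langle z\rangle^{-(d+1)}$, so that $V(\alpha,x)=\phi(\Delta_\alpha g(x))-\phi(\hat\alpha\cdot\nabla g(x))$ (cf.\ \eqref{defV}). The plan is to bring $I_2$ to the form already treated in Lemma \ref{le1}. Since $d\eta$ does not depend on $x$, integration by parts in $x$ gives
\begin{equation*}
\iint\hat\alpha\cdot\nabla_x V(\alpha,x)\,|h(x)|^2\,d\eta(\alpha)\,dx=-2\iint V(\alpha,x)\,h(x)\,\hat\alpha\cdot\nabla h(x)\,d\eta(\alpha)\,dx .
\end{equation*}
Because $d\eta(-\alpha)=d\eta(\alpha)$ and $\widehat{-\alpha}\cdot\nabla h(x)=-\hat\alpha\cdot\nabla h(x)$, I may replace $V(\alpha,x)$ by its odd part $\tfrac12\bigl(V(\alpha,x)-V(-\alpha,x)\bigr)$; and since $\phi$ is even, $V(\alpha,x)-V(-\alpha,x)=\phi(\Delta_\alpha g(x))-\phi(\Delta_{-\alpha}g(x))$, which by Lemma \ref{Cm} and $\Delta_\alpha g(x)+\Delta_{-\alpha}g(x)=-D^2_\alpha g(x)/|\alpha|$, with $D^2_\alpha g(x):=g(x+\alpha)-2g(x)+g(x-\alpha)$, is $\lesssim|D^2_\alpha g(x)|/|\alpha|$. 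The crucial point of this step is that $D^2_\alpha g(x)/|\alpha|=\int_0^1\hat\alpha\cdot\delta_\alpha\nabla g(x+s\alpha)\,ds$ is a genuine first difference of $\nabla g$ (no second derivative of $g$ is paid), precisely the kind of factor that appears in Lemma \ref{le1}.

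Next I would write $\hat\alpha\cdot\nabla h(x)=\Delta_\alpha h(x)+E_\alpha h(x)$ and symmetrise once more under $(x,\alpha)\mapsto(x-\alpha,-\alpha)$. After carrying out the harmless inner $s$-integral and using the translation invariance of the $L^p$ and Triebel--Lizorkin norms, every resulting term should be dominated, for $|\alpha|\le1$, by a sum of the two model integrals met in the proof of Lemma \ref{le1},
\begin{equation*}
\iint|\delta_\alpha h(x)|\,|h(x)|\,|\delta_\alpha\nabla g(x)|\,\frac{d\alpha\,dx}{|\alpha|^{d+1}},\qquad\iint|\delta_\alpha h(x)|^2\,|\delta_\alpha\nabla g(x)|\,\frac{d\alpha\,dx}{|\alpha|^{d+1}},
\end{equation*}
together with a large-scale remainder over $|\alpha|\ge1$ controlled by $\lVert V\rVert_{L^\infty}\lVert h\rVert_{L^2}^2\lesssim\lVert\Delta^{2d}g\rVert_{L^2}^2$, using $0\le\phi\le1$. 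On these two model integrals one runs the scheme of Lemma \ref{le1}: Cauchy--Schwarz in $\alpha$ distributes the weight $|\alpha|^{-d-1}$ among the Triebel--Lizorkin densities of $h$ and of $\nabla g$, Hölder in $x$ with the exponents $p_1,\dots,p_4$ yields products such as $\lVert h\rVert_{L^{p_1}}\lVert h\rVert_{\dot F^{1/4}_{p_2,2}}\lVert\nabla g\rVert_{\dot F^{3/4}_{p_3,2}}$ and $\lVert h\rVert_{\dot F^{1/4}_{p_2,4}}^2\lVert\nabla g\rVert_{\dot F^{1/2}_{p_4,2}}+\lVert\nabla g_1\rVert_{L^\infty}\lVert h\rVert_{\dot H^{1/2}}^2$, and the Gagliardo--Nirenberg inequalities \eqref{GNineqs} convert these into powers of $\lVert\nabla g\rVert_{L^\infty}\le r_0$, $\lVert g\rVert_{\dot H^{4d+1/2}}$ and $\lVert g_j\rVert_{\dot H^{4d+1/2}}$.

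Writing $\lVert g_1\rVert_{\dot H^{4d+1/2}}\le\lVert g\rVert_{\dot H^{4d+1/2}}+\lVert g_2\rVert_{\dot H^{4d+1/2}}$, the $g_1$-contributions collapse into $\lVert\nabla g_1\rVert_{L^\infty}^{\theta}\lVert\Delta^{2d}g\rVert_{\dot H^{1/2}}^2$, where the exponent $\theta$ produced by \eqref{GNineqs} is at least $\tfrac{2}{8d-1}$, so (as $\lVert\nabla g_1\rVert_{L^\infty}\le1$) it is bounded by $\lVert\nabla g_1\rVert_{L^\infty}^{2/(8d-1)}\lVert\Delta^{2d}g\rVert_{\dot H^{1/2}}^2$; the leftover factor $\lVert\nabla g\rVert_{L^\infty}\le r_0$ is absorbed into the implicit constant. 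The $g_2$-contributions retain a power of $\lVert\Delta^{2d}g\rVert_{\dot H^{1/2}}$ strictly below $2$ and are disposed of by Young's inequality as $\varepsilon\lVert\Delta^{2d}g\rVert_{\dot H^{1/2}}^2+\mathcal F(\lVert g_2\rVert_{H^{4d+1/2}}+\varepsilon^{-1})$; the large-scale remainder is handled likewise after $\lVert\Delta^{2d}g\rVert_{L^2}^2\lesssim_{r_0}\lVert\Delta^{2d}g\rVert_{\dot H^{1/2}}^{2(7d-2)/(7d-1)}$ from \eqref{GNineqs} and Young. Summing the contributions yields the stated bound.

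The main obstacle is the first reduction. In Lemma \ref{le1} both copies of $\Delta^{2d}g$ already lie inside differences, whereas here $|\Delta^{2d}g|^2$ is undifferentiated, so one must verify that the integration by parts combined with the parity of $d\eta$ genuinely trades one copy for $\delta_\alpha\Delta^{2d}g$ while leaving only the commutator factor $|D^2_\alpha g(x)|/|\alpha|$. If one were instead forced to pay a second derivative of $g$, or an extra non-integrable power of $|\alpha|^{-1}$ near the origin, the argument would collapse and, in particular, would fail to be uniform in the regularisation parameter $\mu_2$, which is exactly what is needed for the later limit $\mu_2\to0$. Keeping track of which pieces are commutator-like, and of the auxiliary translations $x\mapsto x+s\alpha$ generated by the integral representation of $D^2_\alpha g$, is where the work concentrates; the subsequent Hölder / Triebel--Lizorkin / Gagliardo--Nirenberg / Young steps are routine once Lemma \ref{le1} is in hand.
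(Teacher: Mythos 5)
Your approach is genuinely different from the paper's. The paper never integrates by parts: it applies H\"older in $x$ with both copies of $\Delta^{2d}g$ in $L^{p_1}$, i.e.\ $I_2\le\|\Delta^{2d}g\|_{L^{p_1}}^2\|\int\hat\alpha\cdot\nabla V\,d\eta\|_{L^{8d-1}}$, and then estimates the inner $\alpha$-integral pointwise, using the exact cancellation $\int\hat\alpha\cdot\nabla\bigl(\langle\hat\alpha\cdot\nabla g\rangle^{-(d+1)}\bigr)\,d\eta(\alpha)=0$ (odd integrand against the even measure $d\eta$) to kill the dangerous $\nabla^2 g$ contribution, after which the remainder is controlled by Triebel--Lizorkin/Gagliardo--Nirenberg arguments. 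You instead integrate by parts in $x$ and then use the parity of $d\eta$ together with the evenness of $\phi(z)=\langle z\rangle^{-(d+1)}$ to replace $V$ by its odd part $\lesssim|D^2_\alpha g|/|\alpha|$. That reduction of $V$ is correct and nicely observed.

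However, the integration by parts introduces a genuine gap that the subsequent steps do not close. After $-2\iint V_{\mathrm{odd}}\,h\,\hat\alpha\cdot\nabla h\,d\eta\,dx$ with $h=\Delta^{2d}g$, you split $\hat\alpha\cdot\nabla h=\Delta_\alpha h+E_\alpha h$. The $\Delta_\alpha h$ contribution is indeed a commutator-type model integral with a difference of $h$, and that part is fine. But the $E_\alpha h$ contribution is not controllable by $\|\nabla g\|_{L^\infty}$ and $\|\Delta^{2d}g\|_{\dot H^{1/2}}$: quantitatively $|E_\alpha h|\lesssim|\alpha|^{\theta}\sup_\beta|\delta_\beta\nabla h|/|\beta|^\theta$, so any positive gain in $|\alpha|$ costs $4d+1+\theta$ derivatives of $g$ in an $L^p$-type norm, which exceeds what Gagliardo--Nirenberg can extract from $\|\nabla g\|_{L^\infty}$ and $\|g\|_{\dot H^{4d+1/2}}$ (the interpolation exponent would need to exceed $1$). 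Taking $\theta=0$ leaves $|\nabla h|=|\nabla\Delta^{2d}g|$ uncompensated, and the large-$|\alpha|$ piece of $\iint V_{\mathrm{odd}}\,h\,E_\alpha h\,d\eta\,dx$ then already contains the divergent factor $\|h\|_{L^2}\|\nabla h\|_{L^2}\sim\|g\|_{\dot H^{4d}}\|g\|_{\dot H^{4d+1}}$, not $\|h\|_{L^2}^2$ as you claim. One cannot symmetrise this away: checking parity shows that the integrand $V_{\mathrm{odd}}\,h\,E_\alpha h$ has no extra oddness in $\alpha$, and the algebraic identity $hE_\alpha h=\tfrac12 E_\alpha(h^2)-\tfrac1{2|\alpha|}|\delta_\alpha h|^2$ followed by undoing the IBP on $E_\alpha(h^2)$ merely returns you to the original $\iint\hat\alpha\cdot\nabla V_{\mathrm{odd}}\,|h|^2\,d\eta\,dx=\iint\hat\alpha\cdot\nabla V\,|h|^2\,d\eta\,dx$, i.e.\ $I_2$ itself. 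In short, the IBP destroys precisely the structure (both copies of $h$ undifferentiated, all the $\alpha$-singularity sitting on $\nabla V$, with the linear part cancelled by oddness) that makes $I_2$ estimable in the paper; you should not integrate by parts here.
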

\begin{proof} 
	Using H\"older's inequality we obtain
	\begin{equation}\label{I2decompose}
		I_2\leq\|\Delta^{2d} g\|_{L^{p_1}}^2  
		\left\|\int_{\mathbb{R}^d}\hat\alpha\cdot\nabla V (\alpha,\cdot)  d\eta(\alpha)\right\|_{L^{8d-1}}.
	\end{equation}
	Note that
	$$
	\int_{\mathbb{R}^d}\hat\alpha\cdot\nabla\left(\frac{1}{\langle\hat\alpha\cdot\nabla g(x)\rangle^{d+1}}\right)d\eta(\alpha)=0.
	$$
	Recall the definition \eqref{defV} of $V$ and Lemma \ref{Cm}, we directly have 
	\begin{equation*}
		\begin{aligned}
			&\left|\int\hat\alpha\cdot\nabla V (\alpha,x)  d\eta(\alpha) \right|\lesssim\left|\int\frac{\Delta_\alpha g(x)}{\langle \Delta_\alpha g(x)\rangle^{d+3}}\hat\alpha\cdot \nabla \Delta_\alpha g(x)  d\eta(\alpha)\right|\\&\lesssim \int\left|E_\alpha g(x)\right||\Delta_\alpha \nabla g(x)| \frac{d\alpha}{ |\alpha|^{d}}+\left|\int\frac{\hat \alpha\cdot\nabla g(x)}{\langle \hat \alpha\cdot\nabla g(x)\rangle^{d+3}}\hat \alpha \cdot\nabla \Delta_\alpha g(x)  d\eta(\alpha) \right|\\
			&=:L_1+L_2.
		\end{aligned}
	\end{equation*}
	Using H\"{o}lder's inequality one has 
	$$
	L_1
	\lesssim \left(\int|E_\alpha g(x)|^2\frac{d\alpha}{ |\alpha|^{d+1}}\right)^\frac{1}{2}\left(\int|\delta_\alpha \nabla g(x)|^2 \frac{d\alpha}{ |\alpha|^{d+1}}\right)^\frac{1}{2}.
	$$
	By standard interpolation one has 
	\begin{align}\label{Ealpha}
		\left(\int|E_\alpha g(x)|^2\frac{d\alpha}{ |\alpha|^{d+1}}\right)^\frac{1}{2}+\left(\int|\delta_\alpha \nabla g(x)|^2\frac{d\alpha}{ |\alpha|^{d+1}}\right)^\frac{1}{2}\lesssim\|\nabla g\|_{L^\infty}^\frac{1}{3}\left(\sup_{\alpha} \frac{| E_\alpha g(x)|}{|\alpha|^{\frac{3}{4}}}\right)^{\frac{2}{3}}.
	\end{align}		
	Use the condition $\|\nabla g\|_{L^\infty}\leq r_0$  we have 
	\begin{equation}\label{J1}
		\|L_1\|_{L^{8d-1}}	\lesssim_{r_0}\left\|\left(\sup_{\alpha} \frac{|\delta_\alpha \nabla g(\cdot)|}{|\alpha|^{\frac{3}{4}}}\right)^{\frac{4}{3}}\right\|_{L^{8d-1}}\lesssim_{r_0}
		\|\nabla g\|_{\dot F^{\frac{3}{4}}_{p_3,\infty}}^\frac{4}{3}.
	\end{equation}
	Similarly, we have
	$$
	L_2\lesssim \left(\sup_{\alpha}\frac{|\delta_\alpha \nabla g(x)|}{|\alpha|^{\frac{1}{2}}}\right)^{\frac{1}{3}} \left(\sup_{\alpha} \frac{| E_\alpha\nabla g(x)|}{|\alpha|^{\frac{1}{4}}}\right)^{\frac{2}{3}}.
	$$
	Applying H\"older's inequality again, one has
	\begin{equation}\label{J2}
			\begin{aligned}
		\|L_2\|_{L^{8d-1}}	&\lesssim\left\|\left(\sup_{\alpha}\frac{|\delta_\alpha \nabla g(x)|}{|\alpha|^{\frac{1}{2}}}\right)^{\frac{1}{3}}\right\|_{L^{6(8d-1)}}\left\|\left(\sup_{\alpha} \frac{| E_\alpha\nabla g(x)|}{|\alpha|^{\frac{1}{4}}}\right)^{\frac{2}{3}}\right\|_{L^{6(8d-1)/5}}\\
			&\lesssim
			\|\nabla g\|_{\dot F^{\frac{1}{2}}_{p_4,\infty}}^\frac{1}{3}	\|\nabla^2 g\|_{\dot F^{\frac{1}{4}}_{p_5,\infty}}^\frac{2}{3}.
		\end{aligned}
	\end{equation}

	By the Gagliardo-Nirenberg interpolation inequality \eqref{GNinterpolation}, we have 
	$$
	\|\nabla^2 g\|_{\dot F^{\frac{1}{4}}_{p_5,\infty}}\lesssim\|\nabla g\|_{L^\infty}^\frac{16d-7}{16d-2}\|\Delta^{2d}g\|_{\dot H^\frac{1}{2}}^\frac{5}{16d-2}.
	$$
	Combining this with \eqref{GNineqs}, \eqref{I2decompose}, \eqref{J1} and \eqref{J2}, we obtain
	\begin{align*}
		I_2&\lesssim_{r_0}\left(\|\nabla g_1\|_{L^\infty}^\frac{2}{8d-1}\|\Delta^{2d}g_1\|_{\dot H^\frac{1}{2}}^\frac{16d-4}{8d-1}+\|\nabla g_2\|_{L^\infty}^\frac{1}{8d-1}\|\Delta^{2d}g_2\|_{\dot H^\frac{1}{2}}^\frac{8d-2}{8d-1}\right)\|\Delta^{2d}g\|_{\dot H^\frac{1}{2}}^{\frac{2}{8d-1}}\\
		&\lesssim_{r_0}\|\nabla g_1\|_{L^\infty}^\frac{2}{8d-1}(\|\Delta^{2d}g\|_{\dot H^\frac{1}{2}}^2+\|\Delta^{2d}g_2\|_{\dot H^\frac{1}{2}}^\frac{16d-4}{8d-1}\|\Delta^{2d}g\|_{\dot H^\frac{1}{2}}^{\frac{2}{8d-1}})\\
		&\quad\quad\quad\quad+\|\Delta^{2d}g_2\|_{\dot H^\frac{1}{2}}^\frac{8d-2}{8d-1}\|\Delta^{2d}g\|_{\dot H^\frac{1}{2}}^{\frac{2}{8d-1}}.
	\end{align*}			
	Applying Young's inequality, we get
	$$
	I_2\lesssim_{r_0}(\|\nabla g_1\|_{L^\infty}^\frac{2}{8d-1}+\varepsilon)\|\Delta^{2d}g\|_{\dot H^\frac{1}{2}}^2++\mathcal{F}(||g_2||_{ H^{4d+\frac{1}{2}}}+\varepsilon^{-1}).
	$$
\end{proof}

\begin{lemma}\label{le4}(Estimate for $I_3$)\\
	Let $g$, $r_0$ as defined in Proposition \ref{thm1}, and $M$ as defined in \eqref{defM}, there holds
	\begin{equation*}
		\begin{aligned}
				&\left|\iint  M(\alpha,x) d\eta(\alpha) \Delta^{2d} g(x) dx\right|\\
				&\quad\quad\quad\quad\lesssim_{r_0}  (||\nabla g_1||_{L^\infty}^{\frac{1}{8d-1}}+\varepsilon) ||\Delta^{2d} g||_{\dot H^{\frac{1}{2}}}^2+\mathcal{F}(||g_2||_{ H^{4d+\frac{1}{2}}}+\varepsilon^{-1})
		\end{aligned}
	\end{equation*}
	for any $\varepsilon\in (0,1)$.
\end{lemma}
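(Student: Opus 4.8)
The plan is to expand the commutator $M$ of \eqref{defM} by the Leibniz rule. Writing $R(\alpha,x)=\langle\Delta_\alpha g(x)\rangle^{-(d+1)}$, we have $M=\Delta^{2d}\big((E_\alpha g)R\big)-(\Delta^{2d}E_\alpha g)R$, so the generalized Leibniz formula for $\Delta^{2d}$ presents $M$ as a finite sum of terms in which \emph{at least one} derivative falls on $R$ — this is exactly what the subtraction in \eqref{defM} buys. Combining this with the Fa\`a di Bruno formula for $\partial^{(m)}R$ — using that $R=\phi(\Delta_\alpha g)$ with $\phi(z)=\langle z\rangle^{-(d+1)}$ smooth and bounded together with all its derivatives, and that $\partial^{(m)}\Delta_\alpha g=\Delta_\alpha\partial^{(m)}g=|\alpha|^{-1}\delta_\alpha\partial^{(m)}g$ — I would reduce to estimating, schematically, the model quantities
\[
\iint \big(E_\alpha\partial^{(j)}g\big)(x)\,\phi^{(\ell)}(\Delta_\alpha g(x))\prod_{i=1}^{\ell}\frac{\delta_\alpha\partial^{(k_i)}g(x)}{|\alpha|}\,\Delta^{2d}g(x)\,d\eta(\alpha)\,dx,
\]
with $\ell\ge 1$, $j+k_1+\cdots+k_\ell=4d$, each $k_i\ge 1$, where $\partial^{(m)}$ denotes an arbitrary composition of $m$ plain derivatives compatible with the $\Delta^{2d}$ structure. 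Since every block of the partition carries at least one derivative, each factor $|\alpha|^{-1}\delta_\alpha\partial^{(k_i)}g$ is a difference quotient of $\nabla g$ or of a higher derivative, and hence enjoys some H\"older gain in $\alpha$; also $\Delta_\alpha g$ is uniformly bounded by $\|\nabla g\|_{L^\infty}\le r_0$, so the factor $\phi^{(\ell)}(\Delta_\alpha g)$ is an innocuous bounded multiplier.

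For each model term I would proceed exactly as in Lemma \ref{le1} and Lemma \ref{le2}: apply H\"older's inequality in $x$ to split the product into the $\ell+2$ factors $E_\alpha\partial^{(j)}g$, the $\delta_\alpha\partial^{(k_i)}g/|\alpha|$, and $\Delta^{2d}g$, measuring each in a homogeneous Triebel--Lizorkin seminorm $\dot F^{s}_{p,q}$; distribute the $|\alpha|$-weight $|\alpha|^{-(d+\ell)}$, together with the first-order gain $|\alpha|^{3/4}$ of $E_\alpha\partial^{(j)}g$ obtained as in \eqref{Ealpha} (or, for its $\hat\alpha\cdot\nabla$ part, via the antisymmetrization $\alpha\mapsto-\alpha$ used in the proof of Lemma \ref{le1}), among these factors so that each $\alpha$-integral becomes a finite Triebel--Lizorkin seminorm and the residual power of $|\alpha|$ is positive on $|\alpha|\le1$, while on $|\alpha|\ge1$ one just uses $|\alpha|^{-d}$-integrability together with $\|g\|_{L^\infty}\lesssim_{r_0}\|g\|_{L^2}$ from Remark \ref{Linfty}. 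Each $\dot F^{s}_{p,q}$-seminorm is then converted by the Gagliardo--Nirenberg interpolation \eqref{GNinterpolation} into a power $\|\nabla g\|_{L^\infty}^{\theta}\|g\|_{\dot H^{4d+1/2}}^{1-\theta}$, as in \eqref{GNineqs}. To upgrade the resulting coefficient $\|\nabla g\|_{L^\infty}^{\kappa}$ into the small quantity $\|\nabla g_1\|_{L^\infty}^{1/(8d-1)}$ of the statement I would insert $g=g_1+g_2$ in whichever factor is measured in a "rough" norm ($L^\infty$, or $\dot F^s_{p,\infty}$ with $p$ large, or $L^{p_1}$ when $\Delta^{2d}g$ itself is put in $L^{p_1}$ — this last case is the one that produces precisely the exponent $1/(8d-1)$); the $g_1$-part supplies the genuinely small factor (times a harmless power of $\|\nabla g_1\|_{L^\infty}\le1$), while in the $g_2$-part one factor is dominated by $\|g_2\|_{H^{4d+1/2}}$ via Sobolev embedding and is absorbed, after Young's inequality, into $\mathcal F(\|g_2\|_{H^{4d+1/2}}+\varepsilon^{-1})$.

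The arithmetic that makes this close is the derivative count: the whole term carries at most $8d+1$ derivatives of $g$ ($4d$ on $\Delta^{2d}g$, the remaining $4d$ split among the $E_\alpha\partial^{(j)}g$ and $\delta_\alpha\partial^{(k_i)}g/|\alpha|$ factors, plus at most one extra coming from the $\hat\alpha\cdot\nabla$ in $E_\alpha$), whereas $\|\Delta^{2d}g\|_{\dot H^{1/2}}^2=\|g\|_{\dot H^{4d+1/2}}^2$ provides a budget of $8d+1$ derivatives. Thus each model term admits at most two factors of $\|g\|_{\dot H^{4d+1/2}}$, and the deficit is filled by a strictly positive power of $\|\nabla g\|_{L^\infty}\le r_0$ or $\|\nabla g_1\|_{L^\infty}$, or by quantities depending only on $\|g_2\|_{H^{4d+1/2}}$ and $\varepsilon^{-1}$ after Young's inequality; this yields the claimed bound.

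I expect the main obstacle to be purely the combinatorial bookkeeping of the exponents: one must check, uniformly over all tuples $(\ell;j,k_1,\dots,k_\ell)$ produced by the Leibniz and Fa\`a di Bruno expansions, that the Triebel--Lizorkin indices can be chosen so that (i) the H\"older integrability exponents sum to $1$, (ii) the $|\alpha|$-integrals converge near the origin given the available H\"older regularity ($\nabla g\in\dot C^{1/2}$ and $g\in\dot H^{4d+1/2}$), and (iii) the total exponent of $\|g\|_{\dot H^{4d+1/2}}$ is at most $2$, with equality forcing the remaining homogeneity to be a positive power of $\|\nabla g_1\|_{L^\infty}$ rather than an unabsorbable positive power of $\|g\|_{\dot H^{4d+1/2}}$. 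The tightest situations are the most balanced distributions of derivatives and the terms with the largest $\ell$; verifying that the budget genuinely closes there — which is why one needs the slack coming from the first-order vanishing of $E_\alpha$ and from measuring $\Delta^{2d}g$ in $L^{p_1}$ instead of $L^2$ — is the heart of the matter.
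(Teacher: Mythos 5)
Your proposal takes essentially the same route as the paper's proof of Lemma~\ref{le4}: both expand the commutator $M$ via Leibniz and Fa\`a di Bruno so that at least one derivative falls on $\langle\Delta_\alpha g\rangle^{-(d+1)}$, then apply H\"older's inequality to split the resulting product into Triebel--Lizorkin seminorms (with $\Delta^{2d}g$ measured in $L^{p_1}$), convert each via the Gagliardo--Nirenberg interpolation \eqref{GNinterpolation} into a power $\|\nabla g\|_{L^\infty}^\theta\|g\|_{\dot H^{4d+1/2}}^{1-\theta}$, and finally insert $g=g_1+g_2$ in the $L^{p_1}$ factor to extract the small coefficient $\|\nabla g_1\|_{L^\infty}^{1/(8d-1)}$ after Young's inequality. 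The paper carries out the bookkeeping you defer to "the heart of the matter" by choosing the explicit indices $p_6,p_7,\beta$ so that, uniformly over the Fa\`a di Bruno partitions, the $\alpha$-integral converges and the total exponent on $\|\Delta^{2d}g\|_{\dot H^{1/2}}$ comes out to exactly $2$, with a full power of $\|\nabla g\|_{L^\infty}$ left over, which matches your derivative-count heuristic.
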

\begin{proof}
	Applying H\"{o}lder's inequality one has		
	\begin{align*}
		\left|\iint  M(\alpha,x) d\eta(\alpha) \Delta^{2d} g(x) dx\right|\lesssim ||\Delta^{2d} g||_{L^{p_1}} \left\|\int M(\alpha,x) d\eta(\alpha) \right\|_{L ^{\frac{8d-1}{4d}}}.
	\end{align*}
	Recall the definition of $M(\alpha,x)$ in \eqref{defM},
	we have
	\begin{align*}
		|M(\alpha,x)|&\lesssim \sum_{m_1+m_2=4d,m_2>0}|\alpha||\Delta_\alpha D^{1+m_1}g(x)|\left|D^{m_2}\left(\frac{1}{\left\langle \Delta_\alpha g\right\rangle^{d+1}}\right)\right|\\&\lesssim
		\sum_{m_1+m_2=4d,m_2>0}\sum_{k=1}^{m_2}|\alpha||\Delta_\alpha D^{1+m_1}g(x)||D^k\Delta_\alpha g(x)|^{\frac{m_2}{k}}.
	\end{align*}
	Applying H\"{o}lder's inequality one obtains
	\begin{align*}
		&\left\|\int M(\alpha,x) d\eta(\alpha) \right\|_{L^{\frac{8d-1}{4d}}}\\
		&\quad\quad\lesssim
		\sum_{m_1+m_2=4d,m_2>0}\sum_{k=1}^{m_2}\left\|\int |\Delta_\alpha D^{1+m_1}g||D^k\Delta_\alpha g|^{\frac{m_2}{k}}\frac{d\alpha}{|\alpha|^{d-1}}\right\|_{L^{\frac{8d-1}{4d}}}\\
		&\quad\quad\lesssim\sum_{m_1+m_2=4d,m_2>0}\sum_{k=1}^{m_2}\|D^{1+m_1}g\|_{\dot F^{\frac{1}{2}}_{p_6,2}}\|D^jg\|_{\dot F^{\beta}_{p_7,2m_2/k}}^\frac{m_2}{j},
	\end{align*}
	where
	\[
	{p_6}={\frac{2(8d-1)}{2m_1+1}},\qquad{p_7}={\frac{2(8d-1)m_2}{k(2m_2-1)}}, \qquad \beta=1-\frac{k}{2m_2}.
	\]
	By the interpolation inequality  \eqref{GNinterpolation}, we know that
	\begin{align*}
		\|D^{1+m_1}g\|_{\dot F^{\frac{1}{2}}_{p_6,2}}&\lesssim\|\nabla g\|_{L^\infty}^{\frac{8d-1-2m_1-1}{8d-1}}\|\Delta^{2d} g\|_{\dot H^{\frac{1}{2}}}^{\frac{2m_1+1}{8d-1}},\\	
		\|D^kg\|_{\dot F^{\beta}_{p_7,2m_2/k}}&\lesssim\|\nabla g\|_{L^\infty}^{1-\frac{(2m_2-1)k}{(8d-1)m_2}}\|\Delta^{2d} g\|_{\dot H^{\frac{1}{2}}}^{\frac{(2m_2-1)k}{(8d-1)m_2}}.
	\end{align*}
	Then one has
	\begin{align*}
		\left\|\int M(\alpha,x) d\eta(\alpha) \right\|_{L^{\frac{8d-1}{4d}}}\lesssim\|\Delta^{2d} g\|_{\dot H^{\frac{1}{2}}}^{\frac{8d}{8d-1}}\|\nabla g\|_{L^\infty}^{\frac{8d-2}{8d-1}}.
	\end{align*}
	Combining this with \eqref{GNineqs} we have
	\begin{align*}
		I_3&\lesssim \left(\|\nabla g_1\|_{L^\infty}^\frac{1}{8d-1}\|\Delta^{2d}g_1\|_{\dot H^\frac{1}{2}}^\frac{8d-2}{8d-1}+\|\nabla g_2\|_{L^\infty}^\frac{1}{8d-1}\|\Delta^{2d}g_2\|_{\dot H^\frac{1}{2}}^\frac{8d-2}{8d-1}\right)
		\|\Delta^{2d} g\|_{\dot H^{\frac{1}{2}}}^{\frac{8d}{8d-1}}\|\nabla g\|_{L^\infty}^{\frac{8d-2}{8d-1}}\\
		&\lesssim_{r_0}\|\nabla g_1\|_{L^\infty}^\frac{1}{8d-1}(\|\Delta^{2d}g\|_{\dot H^\frac{1}{2}}^2+\|\Delta^{2d}g_2\|_{\dot H^\frac{1}{2}}^\frac{8d-2}{8d-1}\|\Delta^{2d}g\|_{\dot H^\frac{1}{2}}^{\frac{8d}{8d-1}})+\|\Delta^{2d}g_2\|_{\dot H^\frac{1}{2}}^\frac{8d-2}{8d-1}\|\Delta^{2d}g\|_{\dot H^\frac{1}{2}}^{\frac{8d}{8d-1}}.
	\end{align*}
	Applying Young's inequality we have
	\begin{align*}
		I_3\lesssim_{r_0}(||\nabla g_1||_{L^\infty}^\frac{1}{8d-1}+\varepsilon) ||\Delta^{2d} g||_{\dot H^{\frac{1}{2}}}^2+\mathcal{F}(||g_2||_{ H^{4d+\frac{1}{2}}}+\varepsilon^{-1}),
	\end{align*}
	which completes the proof.
\end{proof}
\vspace{0.5cm}
\begin{lemma}\label{le3}(Estimate for $I_4$)\\
	Let $g$ as defined in Proposition \ref{thm1}, and $V$ as defined in \eqref{defV}, there holds
	\begin{align*}
		&\left|\iint\Delta_\alpha (\Delta^{2d} g)(x)V(\alpha,x)d\eta(\alpha)\Delta^{2d} g(x) dx\right|\\
		&\quad\quad\lesssim_{r_0}  (||\nabla g_1||_{L^\infty}^\frac{1}{8d-1}+\varepsilon) ||\Delta^{2d} g||_{\dot H^{\frac{1}{2}}}^2+\mathcal{F}(||g_2||_{ H^{4d+\frac{1}{2}}}+\varepsilon^{-1})
	\end{align*}
	for any $\varepsilon\in (0,1)$.
\end{lemma}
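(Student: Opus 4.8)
The plan is to reduce $I_4$ to the integral already controlled in the treatment of $I_{1,1}$ in the proof of Lemma~\ref{le1}. Set $h=\Delta^{2d}g$. Since $\Delta_\alpha h(x)=\delta_\alpha h(x)/|\alpha|$ and, by Lemma~\ref{Cm} applied with $m=d+1$, $a=\Delta_\alpha g(x)$, $b=\hat\alpha\cdot\nabla g(x)$, one has $|V(\alpha,x)|\lesssim|\Delta_\alpha g(x)-\hat\alpha\cdot\nabla g(x)|=|E_\alpha g(x)|$, bounding $0\le\tilde\chi_{\mu_2}\le1$ gives
\[
|I_4|\ \lesssim\ \iint|\delta_\alpha h(x)|\,|E_\alpha g(x)|\,|h(x)|\,\frac{d\alpha\,dx}{|\alpha|^{d+1}}.
\]
This is the exact analogue of $I_{1,1}$ with $E_\alpha g$ in the role of $\delta_\alpha\nabla g$; note that $\delta_\alpha h$ occurs only to the first power, so there is no term analogous to the more delicate $I_{1,2}$.

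The only genuinely new ingredient is that $E_\alpha g$ satisfies the same Triebel--Lizorkin bound as $\delta_\alpha\nabla g$. From the chord identity $E_\alpha g(x)=\int_0^1\hat\alpha\cdot\delta_{u\alpha}\nabla g(x)\,du$, Cauchy--Schwarz in $u$ gives $|E_\alpha g(x)|^2\le\int_0^1|\delta_{u\alpha}\nabla g(x)|^2\,du$; substituting $\beta=u\alpha$ and using $\int_0^1u^{3/2}\,du<\infty$ yields the pointwise estimate
\[
\Big(\int\frac{|E_\alpha g(x)|^2}{|\alpha|^{d+3/2}}\,d\alpha\Big)^{1/2}\ \lesssim\ \Big(\int\frac{|\delta_\beta\nabla g(x)|^2}{|\beta|^{d+3/2}}\,d\beta\Big)^{1/2},
\]
so that $\|(\int|E_\alpha g|^2|\alpha|^{-d-3/2}\,d\alpha)^{1/2}\|_{L^{p_3}}\lesssim\|\nabla g\|_{\dot F^{3/4}_{p_3,2}}$; the $\beta$-integral converges at infinity since $\nabla g\in L^\infty$ and near the origin by \eqref{GNineqs}.

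With the weight splitting $|\alpha|^{-d-1}=|\alpha|^{-(d+1/2)/2}\,|\alpha|^{-(d+3/2)/2}$ and H\"older's inequality in $(x,\alpha)$ with the triple $(p_1,p_2,p_3)$ (for which $p_1^{-1}+p_2^{-1}+p_3^{-1}=1$, exactly as in Lemma~\ref{le1}), the displayed integral is bounded by
\[
\|\Delta^{2d}g\|_{L^{p_1}}\,\|\Delta^{2d}g\|_{\dot F^{1/4}_{p_2,2}}\Big(\|\nabla g_1\|_{\dot F^{3/4}_{p_3,2}}+\|\nabla g_2\|_{\dot F^{3/4}_{p_3,2}}\Big),
\]
which is precisely the bound obtained for $I_{1,1}$ in the proof of Lemma~\ref{le1}.

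From this point I would simply repeat the argument in Lemma~\ref{le1}: insert the Gagliardo--Nirenberg estimates \eqref{GNineqs}, use $\|\nabla g\|_{L^\infty}\le r_0$ and $\|g_1\|_{\dot H^{4d+1/2}}\le\|\Delta^{2d}g\|_{\dot H^{1/2}}+\|g_2\|_{\dot H^{4d+1/2}}$, and close with Young's inequality (absorbing the powers of $\|\Delta^{2d}g\|_{\dot H^{1/2}}$ into either the small factor $\|\nabla g_1\|_{L^\infty}^{(16d-5)/(16d-2)}$ or into $\varepsilon$, and sending all $\|g_2\|$-dependent leftovers into $\mathcal F(\|g_2\|_{H^{4d+1/2}}+\varepsilon^{-1})$). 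Since $\|\nabla g_1\|_{L^\infty}\le\|g_1\|_{Lip}\le1$ and $\frac{16d-5}{16d-2}\ge\frac{1}{8d-1}$, the exponent may be lowered to $\frac{1}{8d-1}$, giving the claim. The main (and essentially only) obstacle is the reduction of $E_\alpha g$ to the $\delta_{u\alpha}\nabla g$ square function together with checking that the rescaling lands on the Triebel--Lizorkin index $3/4$ with a convergent $u$-integral; everything after that is identical to Lemma~\ref{le1}.
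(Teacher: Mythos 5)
Your argument is correct, but it follows a genuinely different path than the paper's. You both begin by using Lemma~\ref{Cm} to bound $|V(\alpha,x)|\lesssim|E_\alpha g(x)|$, reducing $I_4$ to
\[
\iint|\delta_\alpha \Delta^{2d}g(x)|\,|E_\alpha g(x)|\,|\Delta^{2d}g(x)|\,\frac{d\alpha\,dx}{|\alpha|^{d+1}}.
\]
From there the paper performs Cauchy--Schwarz in $\alpha$ against the \emph{unsplit} weight $|\alpha|^{-d-1}$, applies H\"older in $x$ with the triple $(p_1,2,16d-2)$ so that the $\delta_\alpha\Delta^{2d}g$ square function lands directly in $\dot H^{\frac12}$, and then reuses the interpolation \eqref{Ealpha} (already established for $I_2$) to turn $\bigl(\int|E_\alpha g|^2|\alpha|^{-d-1}d\alpha\bigr)^{1/2}$ into $\|\nabla g\|_{L^\infty}^{1/3}\|\nabla g\|_{\dot F^{3/4}_{p_3,\infty}}^{2/3}$. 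You instead split the weight $|\alpha|^{-(d+1)}=|\alpha|^{-(d+\frac12)/2}|\alpha|^{-(d+\frac32)/2}$ and use the H\"older triple $(p_1,p_2,p_3)$, which reduces $I_4$ to exactly the Triebel--Lizorkin product that controlled $I_{1,1}$ in Lemma~\ref{le1}. The one new ingredient you need---and verify correctly---is that the chord identity $E_\alpha g=\int_0^1\hat\alpha\cdot\delta_{u\alpha}\nabla g\,du$, together with Cauchy--Schwarz in $u$ and the substitution $\beta=u\alpha$ (with $\int_0^1 u^{3/2}\,du<\infty$), gives the pointwise domination of the $E_\alpha g$ square function by the $\delta_\beta\nabla g$ square function, so the $\dot F^{3/4}_{p_3,2}$ bound transfers. (The paper silently uses the analogous, simpler $\dot F^{3/4}_{p_3,\infty}$ comparison when passing from $\sup_\alpha|E_\alpha g|/|\alpha|^{3/4}$ to $\sup_\alpha|\delta_\alpha\nabla g|/|\alpha|^{3/4}$ in \eqref{J1}, so you are really making explicit a step the paper leaves implicit.) Your closing observation that $\|\nabla g_1\|_{L^\infty}\le\|g_1\|_{Lip}\le1$ allows lowering the $I_{1,1}$-exponent $\frac{16d-5}{16d-2}$ to the stated $\frac{1}{8d-1}$ is valid since $\frac{16d-5}{16d-2}\ge\frac{1}{8d-1}$ for $d\ge1$. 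Your route is slightly more systematic (it makes $I_4$ a corollary of $I_{1,1}$ plus a clean rescaling lemma), whereas the paper's is marginally shorter by recycling \eqref{Ealpha}.
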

\begin{proof} 
	Applying Lemma \ref{Cm}, H\"{o}lder's inequality and \eqref{Ealpha} one has
	\begin{align*}
		I_4&\lesssim ||\Delta^{2d} g||_{L^{p_1}} \|\Delta^{2d} g\|_{\dot H^\frac{1}{2}}\|\nabla g\|_{\dot F_{p_3,\infty}^\frac{3}{4}}^\frac{2}{3}\|\nabla g\|_{L^\infty}^\frac{1}{3}.
	\end{align*}	
	By the interpolation inequalities \eqref{GNineqs}	we obtain
	\begin{align*}
		I_4&\lesssim_{r_0}\left(\|\nabla g_1\|_{L^\infty}^\frac{1}{8d-1}\|\Delta^{2d}g_1\|_{\dot H^\frac{1}{2}}^\frac{8d-2}{8d-1}+\|\nabla g_2\|_{L^\infty}^\frac{1}{8d-1}\|\Delta^{2d}g_2\|_{\dot H^\frac{1}{2}}^\frac{8d-2}{8d-1}\right)\|\Delta^{2d}g\|_{\dot H^\frac{1}{2}}^{\frac{8d}{8d-1}}\\
		&\lesssim_{r_0}\|\nabla g_1\|_{L^\infty}^\frac{1}{8d-1}(\|\Delta^{2d}g\|_{\dot H^{\frac{1}{2}}}^2+\|\Delta^{2d}g_2\|_{\dot H^{\frac{1}{2}}}^\frac{8d-2}{8d-1}\|\Delta^{2d}g\|_{\dot H^{\frac{1}{2}}}^{\frac{8d}{8d-1}})\\
		&\quad\quad\quad\quad+\|\Delta^{2d}g_2\|_{\dot H^{\frac{1}{2}}}^\frac{8d-2}{8d-1}\|\Delta^{2d}g\|_{\dot H^{\frac{1}{2}}}^{\frac{8d}{8d-1}}.
	\end{align*}
	Applying Young's inequality we have
	\begin{align*}
		I_4\lesssim_{r_0}(||\nabla g_1||_{L^\infty}^\frac{1}{8d-1}+\varepsilon) ||\Delta^{2d} g||_{\dot H^{\frac{1}{2}}}^2+\mathcal{F}(\|\Delta^{2d} g_2\|_{ H^{\frac{1}{2}}}+\varepsilon^{-1}),
	\end{align*}
	which completes the proof.
\end{proof}
\bibliographystyle{amsplain}

\end{document}